\pgfplotsset{compat=1.15}
\DeclareMathOperator{\Span}{span}
\DeclareMathOperator{\rank}{rank}
\theoremstyle{plain}
\newtheorem{thrm}{Theorem}[section]
\newtheorem{cor}[thrm]{Corollary}
\newtheorem{prop}[thrm]{Proposition}
\newtheorem{lem}[thrm]{Lemma}
\theoremstyle{definition}
\newtheorem{defn}[thrm]{Definition}
\newtheorem{rem}[thrm]{Remark}
\newtheorem{exm}[thrm]{Example}
\crefname{thrm}{Theorem}{Theorems}
\crefname{theorem}{Theorem}{Theorems}
\crefname{lem}{Lemma}{Lemmas}
\crefname{cor}{Corollary}{Corollaries}
\crefname{prop}{Proposition}{Propositions}
\crefname{defn}{Definition}{Definitions}
\crefname{exm}{Example}{Examples}
\crefname{rem}{Remark}{Remarks}
\crefname{conj}{Conjecture}{Conjectures}
\crefname{quest}{Question}{Questions}
\crefname{section}{Section}{Sections}
\crefname{equation}{\unskip}{\unskip}
\crefname{enumi}{\unskip}{\unskip}
\crefname{subsection}{Subsection}{Subsections}
\newcommand{\af}{\alpha}
\newcommand{\lb}{\lambda}
\newcommand{\vf}{\varphi}
\newcommand{\CC}{\mathbb{C}}
\newcommand{\sst}{\subseteq}
\newcommand{\impl}{\Rightarrow}
\begin{document}
	\title[Bilinear maps that have product property at a compact element]{Bilinear maps on C$^*$-algebras that have product property at a compact element}	
	
	\author{Jorge J. Garc{\' e}s}
	\address{Departamento de Matem{\' a}tica Aplicada a la Ingenier{\' i}a Industrial, ETSIDI, Universidad Polit{\' e}cnica de Madrid, Madrid, Spain}
	\email{j.garces@upm.es}
	
	\author{Mykola Khrypchenko}
	\address{Departamento de Matem\'atica, Universidade Federal de Santa Catarina,  Campus Reitor Jo\~ao David Ferreira Lima, Florian\'opolis, SC, CEP: 88040--900, Brazil \and CMUP, Departamento de Matemática, Faculdade de Ciências, Universidade do Porto, Rua do Campo Alegre s/n, 4169--007 Porto, Portugal}
	\email{nskhripchenko@gmail.com}

	\subjclass[2020]{ Primary: 15A86, 47B49, 47L05; secondary: 16W25, 46L57}
	\keywords{product property; zero product preserver; compact C$^*$-algebra, von Neumann algebra, zero product determined algebra}
	
	\begin{abstract}
 We study bounded bilinear maps on a C$^*$-algebra $A$ having product property at $c\in A$. This leads us to the question of when a C$^*$-algebra is determined by products at $c.$
  In the first part of our paper, we investigate this question for compact C$^*$-algebras, and in the second part, we deal with von Neumann algebras having non-trivial atomic part. Our results are applicable to descriptions of homomorphism-like and derivation-like maps at a fixed point on such algebras.
	\end{abstract}
	
	\maketitle
	
	\tableofcontents
	
	\section*{Introduction}

 One of the popular questions in the theory of linear preservers is: to what extent are the properties of a linear map determined by its values on some specific elements? For instance, restricting the definition of a homomorphism or a derivation of an algebra $A$ to pairs of elements whose product belongs to a fixed subset of $A$, we get a linear map which might no longer be a homomorphism or a derivation. Probably, the most studied class of such maps is that of \textit{zero-product preservers}~\cite{Chebotar-Ke-Lee-Wong03}, i.e. linear maps $\vf:A\to A$ such that $\vf(a)\vf(b)=0$, whenever $ab=0$. The derivation-like analog satisfies $\vf(ab)=a\vf(b)+\vf(a)b$, if $ab=0$, and is sometimes called a linear map \textit{derivable at zero} (see, for example, the introduction of~\cite{AbAdJamPeralta-AntiDer-2020}). Both of the notions generalize to bilinear maps $B:A\times A\to X$ \textit{preserving zero products}~\cite{Alam_Extr_Villean_HomDer_2007}, i.e. satisfying $ab=0\impl B(a,b)=0$. A typical example is a map of the form $B(a,b)=T(ab)$, where $T$ is linear. If there are no other zero products preserving bilinear maps $B$, then the algebra $A$ is said to be \textit{zero product determined} (or \textit{zpd}). For example, the matrix algebra $M_n(R)$ over a commutative unital ring $R$ is zpd (more generally, any unital algebra generated by idempotents is zpd~\cite[Theorem 2.3]{Bresar-zpd}). An appropriate notion of a \textit{zpd Banach algebra} has been introduced in~\cite{Alam_Bre_Ex_Vill_ZP}. It is worth noticing that every C$^*$-algebra is a zpd Banach algebra (see~\cite[Section 5.3]{Bresar-zpd}).
 
 A natural step towards generalizing the above notions is to consider products $ab$ equal to a fixed, not necessarily zero, element $c\in A$. Chebotar, Ke, Lee and Shiao~\cite{Chebotar-Ke-Lee-Shiao05} proved that a bijective additive map on a division ring preserving products equal to the identity element is either an automorphism multiplied by a central element or an anti-automorphism multiplied by a central element. Catalano extended the above description, permitting the products to be an arbitrary invertible element of a division ring $D$~\cite{Catalano18} or even of the matrix ring $M_n(D)$ over $D$~\cite{Catalano-Hsu-Kapalko19}. The case where $ab$ is a rank-one idempotent of $M_n(\mathbb{C})$ was treated by Catalano in~\cite{Catalano21}, followed by the case where $ab$ is a nilpotent element of $M_n(\mathbb{C})$~\cite{Catalano-Chang-Lee21}. Perhaps, the most interesting situation for our purposes was considered in~\cite{Catalano-Julius21b}, where $ab$ is a matrix from $M_n(\mathbb{C})$ of rank at most $n-2$. The corresponding product preservers turn out to be automorphisms multiplied by scalars. In the case of infinite-dimensional Banach algebras, we can mention \cite{Burgos-Cabello-Peralta19} where the authors study linear mappings between C$^*$-algebras that are $^*$-homomorphisms at products $ab$ belonging to a fixed subset which can be  $\{1 \}$, $\{u\}$ or $\{p,1-p\}$ with $u$ being a unitary element and $p$ a projection. More recently, Julius \cite{Julius-JMAA-22} has studied linear maps from $B(X)$, where $X$ is an infinite-dimensional Banach space, to an associative algebra $A$ \textit{that preserve products at a fixed element}. The key result for our purposes is \cite[Proposition 2.1]{Julius-JMAA-22} where it is shown that every linear map $\vf:B(X)\to A$ that preserves products at a finite-rank operator preserves zero products.
 

Our main object of study in this paper is bounded bilinear maps $V$ on a C$^*$-algebra $A$ that satisfy $V(a,b)=\vf(ab)$ for some bounded linear map $\vf$ on $A$ \textit{and for all $a,b\in A$ with $ab$ being equal to a fixed element $c\in A$}. We call them maps having \textit{product property at $c$}. Our approach consists in proving that, under certain assumptions on $A$ and $c$, any map $V$ with product property at $c$ preserves zero products. Since $A$ is zpd, the latter implies that there exists a bounded linear map $\psi$ on $A$ (a priori, different from $\vf$) such that $V(a,b)=\psi(ab)$ \textit{for all $a,b\in A$}. Similarly to the zpd-case, in such a situation we say that $A$ is \textit{determined by products at $c$}.

In the case $A=M_n(\CC)$, the assumption $\rank(c)\le n-2$ coming from \cite{Catalano-Julius21b} cannot be dropped as shown in \cref{exm-rank(c)=n-1,exm-rank(c)=n}. This forces us to introduce what we call the \textit{rank-hypothesis} on $c$. It has two versions \cref{RH,VRH} working in the contexts of compact C$^*$-algebras and von Neumann algebras, respectively. 

In \cref{sec-compact} we prove the first main result of our paper that any compact C$^*$-algebra $A$ is determined by products at any element $c$ satisfying the rank-hypothesis (\cref{thm main compact c*}). The proof is based on two important technical \cref{lem S zp-dense in compact C*,lem compact star factor thu zp}, one of them saying that the set $MinPI(A)$ of minimal partial isometries of $A$ is \textit{zp-dense} in $A$, and another one showing that $c$ (under the rank-hypothesis) \textit{factorizes through zero products} in $MinPI(A)$.

In \cref{sec-von-Neumann} we deal with von Neumann algebras $A$ having non-trivial atomic part. Such algebras are proved to be determined by products at any finite-rank element $c\in A$ satisfying the rank-hypothesis (see \cref{cor von neumann finit bounded}). On the other hand, if $c$ is a compact element of $A$ (satisfying the rank-hypothesis), then $A$ is determined by product at $c$ in a slightly different sense involving weak$^*$-topology (\cref{thhm von Neumann algebras}). 

Finally, in \cref{sec-appl} we consider more specific cases of bilinear maps, namely, those coming from pairs of linear maps with product property at $c$ (in particular, from a single linear map with product property at $c$) and from linear maps derivable at $c$. We first characterize in \cref{prop zp pairs bij finite case} pairs of linear maps between C$^*$-algebras $A$ and $B$ that have product property at zero. Then, we combine in \cref{cor pairs with pp at c} this result with the results from \cref{sec-compact,sec-von-Neumann} to obtain descriptions of pairs of linear maps having product property at $c$ under the corresponding assumptions on the algebras $A$ and $B$ and on $c$. Such linear maps turn out to be \textit{weighted homomorphisms}. In the case of bijective or symmetric linear maps having product property at $c$, similar descriptions have been obtained in \cref{thm map with pp at finite rank element,thm map-pp-at-c-compact case} under weaker assumptions. In \cref{cor derivation at c} we treat several cases, where a linear map derivable at $c$ from a C$^*$-algebra $A$ to a Banach $A$-bimodule is a special type of a generalized derivation.

\section{Preliminaries}

 Let $A$ be an algebra, $X$ a vector space over the same field and $V:A \times A \to X$ a bilinear map. We say that $V$ has \textit{product property} if there exists a linear map $\vf:A\to X$ such that for all $a,b\in A$
 \begin{align}\label{V(a_b)=vf(ab)}
     V(a,b)=\vf(ab).
 \end{align}
 It follows that
 \begin{align}\label{V(a_b)=V(1_ab)=V(ab_1)}
     V(a,b)=V(a',b'),\text{ if }ab=a'b'
 \end{align}
 for all $a,b,a',b'\in A$. If $A$ has $1$, then \cref{V(a_b)=V(1_ab)=V(ab_1)} implies \cref{V(a_b)=vf(ab)} with $\vf(a)=V(1,a)=V(a,1)$.

 If $A$ is a Banach algebra and $X$ is a Banach space, then a \textit{bounded} bilinear map $V:A\times A\to X$ is said to have \textit{product property} if there exists a \textit{bounded} linear map $\vf:A\to X$ such that \cref{V(a_b)=vf(ab)} holds.
 
 More generally, given $c\in A$, we say that $V$ has \textit{product property at $c$} if for all $a,b\in A$:
\begin{align*}
    ab=c \impl V(a,b)=\vf(ab),
\end{align*}
 where $\vf:A\to X$ is a linear map. This implies \cref{V(a_b)=V(1_ab)=V(ab_1)} subject to $ab=a'b'=c$. Algebras $A$ on which ``having product property'' is the same as ``having product property at zero'' are called \textit{zero product determined algebras} (zpd algebras)~\cite{Bresar-zpd}. If $A$ is a Banach algebra, then we say that $A$ is zero product determined (as a Banach algebra) if every bounded bilinear map that has products property at zero has product property.
 
 By \cite[Theorem 2.3]{Bresar-zpd} any algebra generated by idempotents, in particular $M_n(\CC)$, is zpd. On the other hand, every every C$^*$-algebra is a zpd Banach algebra (see~\cite[Section 5.3]{Bresar-zpd})
 
Observe that if $A$ is a Banach algebra with a bounded approximate identity and $c\in A$, then by \emph{Cohen factorization property} (see \cite[Theorem 1]{CFP}) there exist $a,b\in A$ such that $ab=c$. Thus, in this setting product property makes sense at any element $c\in A.$ Recall that every C$^*$-algebra has a bounded approximate identity (see \cite[Corollary I.7.5]{Takesaki}). We shall make use of these facts along the paper without an explicit mention.
 
 Let $A$ be an algebra and $c\in A.$ We say that $A$ is \textit{determined by products at $c$} if for every vector space $X$ and every bilinear map $V:A\times A\to X$ if $V$ has product property at $c$, then $V$ has product property. If $A$ is a Banach algebra then we say $A$ is determined by products at $c$ (as a Banach algebra) if every \textit{bounded} bilinear map that has products property at $c$ has product property. Clearly, if $A$ is a zpd algebra (resp. zpd Banach algebra) and every bilinear (resp. bilinear bounded) map that has product property at $c$ has product property at zero, then $A$ is determined by products at $c$. 
 
 The next two examples show that  $M_n(\CC)$ is not determined by products at $c$ whenever $\rank(c)>n-2$.
 
  \begin{exm}\label{exm-rank(c)=n-1}
 Let $c=I_{n-1} \oplus 0_{1,1}$. Define 
 $V:M_n(\CC)\times M_n(\CC)\to \CC$ given by 
 $$
 V(a,b)=a_{nn}b_{nn},
 \text{ where }
 a=(a_{ij})
\text{ and }
b=(b_{ij}).
$$ A close look at \cite[Example 1]{Costara-Racsam-2022} allows one to realize that $V(a,b)=0$ whenever $ab=c$. Thus $V$ has the product property at $c.$ However, $V$ does not have product property at zero. Indeed, if $n=2$ take $a=\begin{pmatrix}
1 & 1\\
2&2
\end{pmatrix}$ and $b=\begin{pmatrix}
-1& -2\\
1&2
\end{pmatrix}$. Then $ab=0$, however $V(a,b)=4\neq 0. $ For $n\geq 3$ we let $a'=0_{n-2,n-2}\oplus a$ and $b'=0_{n-2,n-2}\oplus b$. Clearly, $a'b'=0$ but $V(a',b')=4\neq 0$. 
\end{exm}

\begin{exm}\label{exm-rank(c)=n}
Define $V:M_n(\CC)\times M_n(\CC)\to M_n(\CC)$ by $V(a,b)=a^tb^t=(ba)^t$. Then $V$ has product property at $I_n$, but does not have product property at zero. Indeed, it is enough to take $a,b\in M_n(\CC)$ such that $ab=0$ but $ba\neq 0$ (this is similar to \cite[Example 2.7]{Burgos-Cabello-Peralta19}). 
\end{exm}

Let $A$ be a C$^*$-algebra. An element $e\in A$ is called a \textit{projection} if $e^2=e$ and $e^*=e$. We denote by $P(A)$ the set of projections of $A.$ Furthermore, $u\in A$ is a \textit{partial isometry}~\cite{Polo-Peralta-Partial_Isom} if $uu^*$ is a projection (equivalently, $u^*u$ is a projection). A partial isometry $u$ is said to be \textit{minimal} if $uu^*Au^*u=\CC u.$ The set of non-zero minimal partial isometries of $A$ will be denoted by $MinPI(A)$.
An operator $u\in B(H)$ on a complex Hilbert space $H$ is a partial isometry if and only if it is an isometry between $\ker(u)^{\perp}$ (its \textit{initial space}) and $u(H)$ (its \textit{final space}), and $u$ is minimal if and only if $\dim(u(H))=1$. We say that $a,b\in A$ are \textit{orthogonal} (denoted $a\perp b)$ if $ab^*=b^*a=0$. Observe that a minimal partial isometry cannot be the sum of two non-zero mutually orthogonal partial isometries. For if $u=u_1+u_2$ with $u$ minimal and $u_1\perp u_2$ two partial isometries, then $uu^* u_1 u^*u= u_1\notin \CC u.$ We collect some well-known results on partial isometries for later use. 

Let $H$ be a complex Hilbert space. Given $e,f\in H$, define $e\otimes f \in B(H)$ by 
\begin{align*}
    (e\otimes f)(h):=\langle h,f\rangle e,
\end{align*}
where $\langle\cdot,\cdot\rangle$ denotes the scalar product in $H.$ Observe that in $\CC^n$ we have $e_i\otimes e_j=e_{ij}$, where $\{e_i\}_{i=1}^n$ is the natural basis of $\CC^n$ and $\{e_{ij}\}_{i,j=1}^n$ are the matrix units.

\begin{lem}\label{lem partial isometries}
Let $H$ be a complex Hilbert space. We have:
\begin{enumerate}
    \item\label{u=e-otimes-f} $u\in MinPI(B(H))$ iff $u=e\otimes f$ for some $e,f\in H$ with $\|e\|\cdot \|f\|=1$;
     \item\label{uv=0<=>f_1-perp-e_2} Let $u=e_1 \otimes f_1,v=e_2 \otimes f_2\in MinPI(B(H))$. Then $u v=0$ iff $f_1 \perp e_2$.
    \item\label{u-perp-v<=>e_1-perp-e_2-and-f_1-perp-f_2} Let $u=e_1 \otimes f_1,v=e_2 \otimes f_2 \in MinPI(B(H))$. Then $u \perp v$ iff $e_1 \perp e_2$ and $f_1 \perp f_2.$
    \item\label{a=sum_i=1^k-lb_iu_i} If $a\in B(H)$ and $\rank(a)=k<\infty$, then $a=\sum_{i=1}^k\lambda_i u_i$, where $\lambda_i>0$ and $u_i\in MinPI(B(H))$ for all $i$ with $u_i\perp u_j$ for $i\ne j$. Moreover, if $u_i=e_i\otimes f_i$, $1\le i\le k$, then $a(H)=\Span\{e_1,\dots,e_k\}$ and $\ker(a)^\perp=\Span\{f_1,\dots,f_k\}$. 
\end{enumerate}
\end{lem}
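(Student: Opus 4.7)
The plan is to verify each of the four items largely by direct computation with the rank-one operators $e\otimes f$, with the substantive work concentrated in (i) and (iv).

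For part \cref{u=e-otimes-f}, I would argue the easy direction first: if $u=e\otimes f$ with $\|e\|\cdot\|f\|=1$, normalize so that both have norm one, verify that $u^*=f\otimes e$ and compute $uu^*=e\otimes e$, $u^*u=f\otimes f$, which are projections, so $u$ is a partial isometry. Then for any $T\in B(H)$ one computes
\begin{align*}
(e\otimes e)\,T\,(f\otimes f)=\langle Tf,e\rangle\,(e\otimes f),
\end{align*}
which shows that $uu^*B(H)u^*u=\CC u$. Conversely, if $u$ is a minimal partial isometry, then $uu^*$ and $u^*u$ are projections such that the corner $uu^*B(H)u^*u$ is one-dimensional; a routine argument (the corner $pB(H)q$ has dimension $\rank(p)\cdot\rank(q)$) forces $uu^*=e\otimes e$ and $u^*u=f_0\otimes f_0$ for unit vectors $e,f_0$. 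Since $u$ is an isometry from $\CC f_0$ onto $\CC e$, we get $uf_0=\lb e$ with $|\lb|=1$, and setting $f=\bar\lb f_0$ yields $u=e\otimes f$.

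Parts \cref{uv=0<=>f_1-perp-e_2} and \cref{u-perp-v<=>e_1-perp-e_2-and-f_1-perp-f_2} are immediate algebraic computations with the rank-one formula. I would note
\begin{align*}
(e_1\otimes f_1)(e_2\otimes f_2)=\langle e_2,f_1\rangle\,(e_1\otimes f_2),
\end{align*}
from which \cref{uv=0<=>f_1-perp-e_2} follows. For \cref{u-perp-v<=>e_1-perp-e_2-and-f_1-perp-f_2}, applying the same formula to $uv^*=(e_1\otimes f_1)(f_2\otimes e_2)=\langle f_2,f_1\rangle(e_1\otimes e_2)$ and $v^*u=(f_2\otimes e_2)(e_1\otimes f_1)=\langle e_1,e_2\rangle(f_2\otimes f_1)$ shows that $uv^*=0\iff f_1\perp f_2$ and $v^*u=0\iff e_1\perp e_2$.

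Part \cref{a=sum_i=1^k-lb_iu_i} is the singular value decomposition for finite-rank operators. Since $a^*a$ is positive, self-adjoint, and of rank $k$, the spectral theorem gives an orthonormal family $f_1,\dots,f_k$ in $\ker(a)^\perp=\ker(a^*a)^\perp$ with $a^*af_i=\lb_i^2 f_i$, $\lb_i>0$. Setting $e_i:=\lb_i^{-1}af_i$, one checks that $\langle e_i,e_j\rangle=\lb_i^{-1}\lb_j^{-1}\langle a^*af_j,f_i\rangle=\dl_{ij}$, so the $e_i$ are orthonormal and span $a(H)$. The operator $a-\sum_{i=1}^k\lb_i(e_i\otimes f_i)$ vanishes on $f_1,\dots,f_k$ and on $\ker(a)$, hence on all of $H$, proving the decomposition. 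The orthogonality statements on the $u_i=e_i\otimes f_i$ follow from \cref{u-perp-v<=>e_1-perp-e_2-and-f_1-perp-f_2}, and the claims about $a(H)$ and $\ker(a)^\perp$ are built into the construction. The only mildly delicate point is making sure the spectral argument applies in general Hilbert space rather than only in finite-dimensional ones, which is handled by restricting $a^*a$ to the finite-dimensional invariant subspace $\ker(a)^\perp$.
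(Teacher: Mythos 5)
Your proof is correct, and all the computations check out against the paper's convention $(e\otimes f)(h)=\langle h,f\rangle e$, including $(e_1\otimes f_1)(e_2\otimes f_2)=\langle e_2,f_1\rangle(e_1\otimes f_2)$ and $(e\otimes f)^*=f\otimes e$. The difference from the paper is one of self-containedness rather than of mathematical route: the paper only writes out an argument for item \cref{uv=0<=>f_1-perp-e_2} (the same pointwise computation you give) and for the ``moreover'' clause of \cref{a=sum_i=1^k-lb_iu_i}, while items \cref{u=e-otimes-f} and \cref{u-perp-v<=>e_1-perp-e_2-and-f_1-perp-f_2} are quoted from a reference and the decomposition $a=\sum_{i=1}^k\lambda_iu_i$ is quoted from Harris. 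Your corner-dimension argument for the converse of \cref{u=e-otimes-f} (that $\dim\bigl(pB(H)q\bigr)=\rank(p)\cdot\rank(q)$ forces $uu^*$ and $u^*u$ to be rank one, after which $u=puq$ is itself rank one) and your singular value decomposition via the spectral theorem for $a^*a$ restricted to the $k$-dimensional space $\ker(a)^\perp$ supply exactly the content of those citations, at the cost of a somewhat longer proof; what this buys is independence from the quoted sources and an explicit identification of $a(H)$ and $\ker(a)^\perp$ as a by-product of the construction rather than as a separate verification.
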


\begin{proof}
 \cref{u=e-otimes-f,u-perp-v<=>e_1-perp-e_2-and-f_1-perp-f_2} can be found in \cite[Section~2.1.1]{Tesis-Paco-Polo}. The decomposition $a=\sum_{i=1}^k\lambda_i u_i$ in \cref{a=sum_i=1^k-lb_iu_i} follows from \cite[Proposition~3.4]{Harris_An_Inv_1978}. The spaces $a(H)$ and $\ker(a)^\perp$ can be easily calculated by applying $a$ to an arbitrary $x\in H$ and using \cref{u=e-otimes-f,u-perp-v<=>e_1-perp-e_2-and-f_1-perp-f_2}.
 
 Let us prove \cref{uv=0<=>f_1-perp-e_2}. Fix $h\in H.$ Then $uv(h)=(e_1 \otimes f_1)(e_2 \otimes f_2)h= \langle f_1,e_2 \rangle \langle h,f_2 \rangle e_1.$ Clearly, if $f_1 \perp e_2$ then $uv=0$. Conversely, if $f_1$ and $e_2$ are not orthogonal then $uv(f_2)=\|f_2\|^2 \langle f_1,e_2 \rangle e_1 \neq 0.$
\end{proof}

\begin{rem}\label{rem cstar is jbstar}
   Sometimes it will be convenient to realize C$^*$-algebras in a larger class of Banach spaces. 

   A \textit{Jordan-Banach} algebra is a Jordan algebra endowed with a complete norm $\|.\|$ satisfying $\|a \circ b\|\leq \|a\| \|b\|, \forall a,b\in A$. A Jordan-Banach algebra endowed with an involution $^*$ is a JB$^*$-algebra if it satisfies the additional axiom $\|U_a ({a^*}) \|= \|a\|^3, \ (a\in A)$ where for each $a\in A$ the mapping $U_a:A\to A$ is defined by $U_a (b) := 2(a\circ b)\circ a - a^2\circ b.$ Every C$^*$-algebra becomes a JB$^*$-algebra when endowed with the product 
   \begin{align*}
       a\circ b=\frac{1}{2}(ab+ba).
   \end{align*}

   A \emph{JB$^*$-triple} is a complex Banach space $E$ equipped with a triple product  $\{\cdot,\cdot,\cdot\}:E\times E\times E\rightarrow E$ which is linear and symmetric in the outer variables, and conjugate linear in the middle one, and satisfies the axioms:
\begin{itemize}
	\item (Jordan identity) for $a,b,x,y,z$ in $E$,
	$$\{a,b,\{x,y,z\}\}=\{\{a,b,x\},y,z\}
	-\{x,\{b,a,y\},z\}+\{x,y,\{a,b,z\}\};$$
	\item $L(a,a):E\rightarrow E$ hermitian with non negative spectrum, where $L(a,b)(x)=\{a,b,x\}$ with $a,b,x\in
	E$;
	\item $\|\{x,x,x\}\|=\|x\|^3$ for all $x\in E$,
\end{itemize}
   
  Every C$^*$-algebra is a JB$^*$-triple when endowed with the triple product \begin{align*}
        \{a,b,c\}=\frac{1}{2}(ab^*c+cb^*a)
    \end{align*}
    while a JB$^*$-algebra also becomes a JB$^*$-triple with 
    \begin{align*}
        \{a,b,c\}=(a\circ b^*)\circ c+(c\circ b^*)\circ a-(a\circ c)\circ b^* .
    \end{align*}
    See \cite{Cabrera_Palacios_Book} for more details.
    \end{rem}

\section{Product property in compact C$^*$-algebras}\label{sec-compact}

\begin{defn}\label{defn-zp-dense}
Let $A$ be a normed algebra and $S\subseteq A$. We say that $S$ is \textit{zp-dense} in $A$ if
\begin{enumerate}
 \item\label{span(S)=A}  $\overline{\Span(S)}^{\|.\|}=A ;$
 \item\label{exists-v-with-uv=0} for each $u\in S$ there exists  $v\in S$ such that $uv=0$;
 \item\label{xy=0<=>u_iv_j=0} any $x,y\in A$ with $xy=0$ can be expressed as $x=\sum_{n=1}^{\infty} \alpha_n u_n$ and $y=\sum_{n=1}^{\infty} \beta_n v_n$ with $u_i,v_j\in S$ and $u_i v_j=0$ for all $i,j.$
 \end{enumerate}
\end{defn}

\begin{defn}\label{defn-fact-through-c}
Let $A$ be an algebra, $c\in A$ and $x,y\in A$ with $xy=0$. We say that $c$ \emph{factorizes through $(x,y)$} if there exist $a,b\in A$ such that $ay=0, xb=0$ and $ab=c.$ 

 More generally, let $S\subseteq A$.  We say that $c$ \emph{factorizes through zero products in $S$}  if, for every $x,y \in S$ with $xy=0$, $c$ factorizes through $(x,y)$. 
\end{defn}

\begin{rem}
    If $c$ factorizes through zero products in $S$ and the set $S$ satisfies \cref{defn-zp-dense}\cref{exists-v-with-uv=0}, then for every $x\in S$ there exist $a,b\in A$ such that $xb=0$ and $ab=c$. 
\end{rem}


\begin{lem}\label{lem fact thru zp xy=0 implies V(x_y)=0}
     Let $A$ be an algebra, $X$ a vector space and $V:A\times A\to X$ a bilinear map. Suppose that $V$ has product property at $c\in A.$ For all $x,y\in A$ such that $c$ factorizes through $(x,y)$ we have $V(x,y)=0.$
\end{lem}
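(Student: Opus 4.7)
The plan is to exploit bilinearity together with the four elementary products that can be built from the factorization data. By hypothesis there exist $a,b\in A$ with $ab=c$, $ay=0$, $xb=0$, and of course $xy=0$. The idea is that each of the four expressions
\begin{align*}
    a\cdot b, \quad a\cdot(b+y), \quad (a+x)\cdot b, \quad (a+x)\cdot(b+y)
\end{align*}
expands into $ab$ plus terms that all vanish, hence each equals $c$. Applying the product property at $c$ therefore yields $V(a,b)=V(a,b+y)=V(a+x,b)=V(a+x,b+y)=\vf(c)$.

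Next I would expand each of these equalities by bilinearity of $V$. From $V(a,b+y)=V(a,b)$ one reads off $V(a,y)=0$, and from $V(a+x,b)=V(a,b)$ one reads off $V(x,b)=0$. Finally,
\begin{align*}
V(a+x,b+y)=V(a,b)+V(a,y)+V(x,b)+V(x,y)=V(a,b)+V(x,y),
\end{align*}
so the equality $V(a+x,b+y)=V(a,b)$ forces $V(x,y)=0$, as required.

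There is no real obstacle here: the whole argument is a three-line bilinear expansion after noting that the four chosen products are all equal to $c$. The only thing to keep in mind is that the product property at $c$ is being used solely in its weak form, namely that $V$ takes the same value on any pair whose product equals $c$; the auxiliary linear map $\vf$ never needs to be touched.
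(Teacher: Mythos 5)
Your proof is correct and is essentially the paper's own argument: both verify that $ab=a(b+y)=(a+x)b=(a+x)(b+y)=c$, invoke the fact that $V$ takes a common value on all pairs whose product is $c$, and then extract $V(x,y)=0$ by bilinear expansion. The paper merely compresses the bilinearity bookkeeping that you write out explicitly.
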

\begin{proof}
There exist $a,b\in A$ such that $xb=0,ay=0$ and $ab=c.$ Thus, we have $ab=(x+a)b=a(b+y)=(a+x)(b+y)=c.$ Successive applications of \cref{V(a_b)=V(1_ab)=V(ab_1)} to the latter equalities show that $V(x,y)=0.$
\end{proof}
\begin{prop} \label{prop Jilius dec implies zpd}
 Let $A$ be a normed algebra and $S\sst A$ zp-dense. Assume that $c\in A$ factorizes through zero products in $S$.
     If a bounded bilinear map $V:A\times A \to X$, where $X$ is a normed space, has product property at $c$, then $V$ has product property at zero.
\end{prop}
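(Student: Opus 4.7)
The plan is to combine the two preceding results in a direct way: the zp-density of $S$ lets us decompose any zero-product pair $(x,y)$ into series of zero-product pairs in $S$, and the factorization hypothesis together with \cref{lem fact thru zp xy=0 implies V(x_y)=0} kills $V$ on each of those elementary pairs; continuity of $V$ then propagates this to the whole sum.

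More concretely, given $x,y\in A$ with $xy=0$, I would first invoke \cref{defn-zp-dense}\cref{xy=0<=>u_iv_j=0} to write
\[
x=\sum_{n=1}^{\infty}\alpha_n u_n,\qquad y=\sum_{m=1}^{\infty}\beta_m v_m,
\]
with $u_n,v_m\in S$, the series converging in the norm of $A$, and $u_n v_m=0$ for all $n,m$. For each fixed pair $(n,m)$, the hypothesis that $c$ factorizes through zero products in $S$ supplies elements $a_{n,m},b_{n,m}\in A$ with $a_{n,m}v_m=0$, $u_n b_{n,m}=0$ and $a_{n,m}b_{n,m}=c$, so \cref{lem fact thru zp xy=0 implies V(x_y)=0} gives $V(u_n,v_m)=0$.

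Next I would pass from these elementary vanishings to $V(x,y)=0$ by a routine partial-sum argument using that $V$ is bounded, hence separately continuous. Setting $x_N=\sum_{n=1}^N\alpha_n u_n$ and $y_M=\sum_{m=1}^M\beta_m v_m$, bilinearity yields
\[
V(x_N,y_M)=\sum_{n=1}^N\sum_{m=1}^M\alpha_n\beta_m V(u_n,v_m)=0
\]
for every $N,M$. Fixing $N$ and letting $M\to\infty$ gives $V(x_N,y)=0$ by continuity of $V$ in the second variable; then letting $N\to\infty$ gives $V(x,y)=0$ by continuity in the first variable. Hence $V$ has product property at zero.

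The only real issue is making sure the double limit is handled correctly, but since we argue by iterated one-variable continuity of a (separately) continuous bilinear map on partial sums, no Fubini-type justification is needed. In particular, we do not need the double series $\sum_{n,m}\alpha_n\beta_m V(u_n,v_m)$ to converge absolutely — all of its finite rectangular partial sums are identically zero, which is exactly what the argument exploits.
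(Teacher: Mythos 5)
Your proposal is correct and follows essentially the same route as the paper: vanish $V$ on the elementary zero-product pairs $(u_n,v_m)$ via \cref{lem fact thru zp xy=0 implies V(x_y)=0} and the factorization hypothesis, then extend to $(x,y)$ by bilinearity and continuity using the zp-dense decomposition. Your explicit handling of the double limit via iterated partial sums is in fact slightly more careful than the paper's one-line computation, but it is the same argument.
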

\begin{proof}
 Given $u,v\in S$ with $uv=0$ we have $V(u,v)=0$ by \cref{lem fact thru zp xy=0 implies V(x_y)=0}.  Now take $x,y\in A$ such that $xy=0.$ Since $S$ is zp-dense in $A$, then $x=\sum_{n=1}^{\infty} \alpha_n u_n,\; y=\sum_{n=1}^{\infty} \beta_n v_n $ for $\alpha_i,\beta_j\in \CC$ and $u_i,v_j\in S$ with $u_iv_j=0$ for all $i,j$. By the first part of the proof, we have $V(u_i,u_j)=0.$ Finally, by continuity and bilinearity of $V$ we have 
 $$V(x,y)=V\left(\sum_{n=1}^{\infty} \alpha_n u_n,\sum_{k=1}^{\infty} \beta_k v_k \right)=\sum_{n} \sum_k \alpha_n \beta_k V(u_n,v_k)=0 .$$ Thus $V$ has product property at zero. 
%
 %
 %
 %
  %
 %
\end{proof}

  A basic example of the set $S$ from \cref{prop Jilius dec implies zpd} is $S=MinPI(A)$, where $A$ is a C$^*$-algebra having ``enough'' minimal partial isometries.

Given a family $(X_{\lambda})$ of Banach spaces, let $\bigoplus_{\lb}X_{\lb}$ be the set of all $(x_{\lb})\in \prod_{\lb} X_{\lb}$ such that $\|(x_\lb)\|:=\sup \{ \|x_\lb\|\}<\infty.$ Then $\bigoplus_{\lb}X_{\lb}$ is a Banach space. Moreover, if each $X_{\lambda}$ is a C$^*$-algebra, then $\bigoplus_{\lb}X_{\lb}$ is also a C$^*$-algebra. The algebra $\bigoplus_{\lb}X_{\lb}$ contains the subalgebra $\bigoplus_{\lb}^{c_0}X_{\lb}$ formed by $(x_{\lb})\in \prod_{\lb} X_{\lb}$ with $\lim \|x_\lb\|=0$.

Let $A$ be a C$^*$-algebra. The \textit{socle} of $A$, denoted $soc(A)$, is defined as the linear span of all minimal projections in $A$. The ideal of compact elements in $A$, denoted $K (A)$, is defined as the norm closure of $soc(A)$. A C$^*$-algebra is said to be \textit{compact} or dual if $A = K (A)$. In this case, it is known by \cite[Theorem 8.2]{Alexander-Compact-banah} that $A\cong \bigoplus_{\lb}^{c_0} K(H_{\lb})$ where each $H_{\lb}$ is a complex Hilbert space and $K(H_\lb)$ is the algebra of compact operators on $H_\lb$. It is worth noticing that if $\dim(H)=\infty$ then $K(H)$ is not unital. If $A=K(H)$ then $soc(A)$ coincides with the set of finite-rank operators on $H$. We denote by $\pi_{\lb}:A \to K(H_{\lb})$ the projection onto $K(H_{\lb}).$ For the basic references on compact C$^*$-algebras see \cite{Alexander-Compact-banah,Kaps-Dual-C*,Ylinen-Compact-C*}.  

Every compact C$^*$-algebra is a compact JB$^*$-triple in the sense of \cite{BuChu}. A \textit{tripotent} in a JB$^*$-triple $E$ is an element $e\in E$ such that $\{e,e,e\}=e$. When $E$ is a C$^*$-algebra regarded as a JB$^*$-triple then $e$ is a tripotent whenever $ee^*e=e$ which is equivalent to being a partial isometry.

\begin{lem}\label{MinPi(A)=union-MinPI(A_lb)}
Let $A=\bigoplus_{\lb}^{c_0} A_{\lb}$, where each $A_{\lb}$ is a C$^*$-algebra. Then $MinPI(A)=\bigcup_{\lb} MinPI(A_{\lb}).$ 
\end{lem}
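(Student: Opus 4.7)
The plan is to prove both inclusions in the equality $MinPI(A) = \bigcup_\lambda MinPI(A_\lambda)$, identifying each $A_\mu$ with the subalgebra of $A$ consisting of tuples supported on the $\mu$-th coordinate. Throughout, I will exploit the fact, already noted in the excerpt, that a minimal partial isometry cannot be written as a sum of two non-zero mutually orthogonal partial isometries.

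First I would handle the easier inclusion $\bigcup_\lambda MinPI(A_\lambda) \subseteq MinPI(A)$. Given $u \in MinPI(A_\mu)$ viewed as an element of $A$, the elements $uu^*$ and $u^*u$ are supported on the $\mu$-th coordinate, so for any $a = (a_\lambda) \in A$ one has $uu^*au^*u = uu^*a_\mu u^*u \in \mathbb{C}u$ by minimality of $u$ in $A_\mu$. Since $u$ is clearly a non-zero partial isometry in $A$, this gives $u \in MinPI(A)$.

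The more delicate direction is $MinPI(A) \subseteq \bigcup_\lambda MinPI(A_\lambda)$. Let $u = (u_\lambda) \in MinPI(A)$. Since $uu^* = (u_\lambda u_\lambda^*)$ must be a projection, each coordinate $u_\lambda$ is a partial isometry in $A_\lambda$. The key step is to show that $u$ is supported on a single coordinate. Suppose towards a contradiction that $u_{\lambda_1}, u_{\lambda_2} \ne 0$ for distinct indices $\lambda_1 \ne \lambda_2$. Then $u = u_{\lambda_1} + (u - u_{\lambda_1})$ expresses $u$ as a sum of two non-zero partial isometries which are mutually orthogonal, since they are supported on disjoint sets of coordinates so that $u_{\lambda_1}(u-u_{\lambda_1})^* = 0$ and $(u-u_{\lambda_1})^*u_{\lambda_1} = 0$. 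This contradicts the minimality of $u$, so $u = u_\mu$ for exactly one $\mu$.

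Finally I would verify that this unique non-zero coordinate $u_\mu$ is itself minimal in $A_\mu$. Given $a \in A_\mu$, viewed as supported on $\mu$ in $A$, one has $u_\mu u_\mu^* a u_\mu^* u_\mu = uu^*au^*u \in \mathbb{C}u = \mathbb{C}u_\mu$, hence $u_\mu u_\mu^* A_\mu u_\mu^* u_\mu \subseteq \mathbb{C}u_\mu$, while the reverse inclusion follows from $u_\mu = u_\mu u_\mu^* u_\mu u_\mu^* u_\mu$. Thus $u_\mu \in MinPI(A_\mu)$, completing the proof. I expect the main (and only non-routine) obstacle to be the coordinate-support argument; everything else is essentially a bookkeeping exercise in the $c_0$-sum structure.
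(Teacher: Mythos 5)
Your proof is correct and follows essentially the same route as the paper: both directions are handled by the same orthogonal-splitting argument (writing $u$ as the sum of a non-zero coordinate and its complement, both partial isometries supported on disjoint coordinates, contradicting minimality) and the same coordinate-wise computation of $uu^*au^*u$. If anything, you are slightly more complete than the paper, which leaves implicit the final verification that the surviving coordinate $u_\mu$ is minimal in $A_\mu$.
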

 \begin{proof}
     Let $u\in MinPI(A)$. We first prove that $u$ lies in a unique $A_\lb.$ For, take $\lb$ such that $\pi_{\lb}(u)\neq 0.$ Then 
     \begin{align*}
     \pi_{\lb}(u)^*(u-\pi_\lb(u))=\pi_\lb(u^*)u-\pi_\lb(u^*)\pi_\lb(u)=\pi_\lb(u^*)\pi_\lb(u)-\pi_\lb(u^*)\pi_\lb(u)=0,
     \end{align*}
 and similarly $(u-\pi_\lb(u))\pi_\lb(u)^*=0$, so $\pi_\lb(u)\perp (u-\pi_\lb(u)).$  Since $\pi_\lb(u)$ is a partial isometry and $u=(u-\pi_\lb(u))+\pi_\lb(u)$ then $u-\pi_\lb(u)$ is a partial isometry. Hence, if $u\not\in A_\lb$, then $u$ is the sum of the two non-zero mutually orthogonal partial isometries $u-\pi_\lb(u)$ and $\pi_\lb(u)$ which contradicts the minimality of $u$. Thus, $MinPI(A)\sst \bigcup_\lb MinPI(A_\lb).$ For the converse inclusion, observe that if $u\in MinPI(A_\lb)$ for some $\lb$ and $x\in A$ then $uu^*xu^*u=uu^*\pi_\lb(x)u^*u=\alpha u$ for some $\alpha \in \CC,$ thus $uu^*Au^*u\subseteq \CC u.$ Taking $x=\alpha u, \alpha \in \CC$, we have  $uu^*Au^*u= \CC u.$
 \end{proof}
 
 \begin{lem}\label{lem S zp-dense in compact C*} Let $A=\bigoplus_{\lb}^{c_0} K(H_{\lb})$ be a compact C$^*$-algebra of dimension $n>1$. Then $MinPI(A)$ is zp-dense in $A.$
 \end{lem}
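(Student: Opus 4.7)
The plan is to verify the three conditions of \cref{defn-zp-dense} with $S=MinPI(A)$, using the decomposition $A=\bigoplus_{\lb}^{c_0} K(H_\lb)$ and \cref{MinPi(A)=union-MinPI(A_lb)}. For \cref{span(S)=A}, I will invoke \cref{lem partial isometries}\cref{a=sum_i=1^k-lb_iu_i} to write every finite-rank operator in $K(H_\lb)$ as a positive real combination of elements of $MinPI(K(H_\lb))$; since each $K(H_\lb)$ is the norm closure of its finite-rank operators and $A$ is their $c_0$-direct sum, the span of $MinPI(A)$ is norm-dense in $A$. For \cref{exists-v-with-uv=0}, given $u=e\otimes f\in MinPI(K(H_\lb))$, if $\dim H_\lb\ge 2$ I pick a unit vector $e'\in H_\lb$ orthogonal to $f$ and set $v=e'\otimes e'\in MinPI(K(H_\lb))$, so $uv=0$ by \cref{lem partial isometries}\cref{uv=0<=>f_1-perp-e_2}; and if $\dim H_\lb=1$ the hypothesis $\dim A>1$ supplies another summand $K(H_\mu)\ne 0$, any element of $MinPI(K(H_\mu))$ being orthogonal to $u$ automatically.

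The substantive step is \cref{xy=0<=>u_iv_j=0}. Given $x=(x_\lb),y=(y_\lb)\in A$ with $xy=0$, I have $x_\lb y_\lb=0$ componentwise; by the $c_0$-condition only countably many indices contribute to $x$ or $y$, which I enumerate as $\lb_1,\lb_2,\dots$ with $\max\{\|x_{\lb_k}\|,\|y_{\lb_k}\|\}\to 0$. For each $k$, the compact-operator Schatten decomposition (a convergent form of \cref{lem partial isometries}\cref{a=sum_i=1^k-lb_iu_i}) gives
\[x_{\lb_k}=\sum_{i\ge 1}\sigma_{k,i}\,e_{k,i}\otimes f_{k,i},\qquad y_{\lb_k}=\sum_{j\ge 1}\tau_{k,j}\,g_{k,j}\otimes h_{k,j},\]
with positive singular values $\sigma_{k,i},\tau_{k,j}$ decreasing to $0$ and orthonormal systems $(e_{k,i}),(f_{k,i}),(g_{k,j}),(h_{k,j})$ in $H_{\lb_k}$. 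From $x_{\lb_k}y_{\lb_k}=0$ I read off $\overline{y_{\lb_k}(H_{\lb_k})}\subseteq\ker(x_{\lb_k})$, hence $f_{k,i}\perp g_{k,j}$ for all $i,j$, and \cref{lem partial isometries}\cref{uv=0<=>f_1-perp-e_2} then yields $(e_{k,i}\otimes f_{k,i})(g_{k,j}\otimes h_{k,j})=0$; for $k\ne k'$ the corresponding minimal partial isometries sit in different summands of $A$ and their product vanishes trivially.

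The main obstacle is amalgamating these pieces into a pair of single series indexed by $\mathbb{N}$ that converge in the norm of $A$. I plan to list the pairs $(k,i)$ (and analogously $(k,j)$) in ``diagonal blocks'', exhausting all pairs with $\max(k,i)\le N$ before introducing any pair with $\max(k,i)=N+1$. A direct estimate shows that the remainder after the block of size $N$ has $\lb_k$-component of norm at most $\sigma_{k,N+1}$ for $k\le N$ and $\|x_{\lb_k}\|$ for $k>N$; the first bound tends to $0$ by the decay of singular values within each summand and the second by the $c_0$-decay across summands, and a straightforward two-parameter balancing forces the total remainder norm to vanish as $N\to\infty$. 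The same argument applies to $y$, which closes the verification of all three conditions.
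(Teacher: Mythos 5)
Your proof is correct and follows the same overall strategy as the paper's: verify the three conditions of \cref{defn-zp-dense}, handle \cref{exists-v-with-uv=0} by distinguishing whether the summand containing $u$ admits an orthogonal companion internally or in another summand, and establish \cref{xy=0<=>u_iv_j=0} by decomposing $x$ and $y$ into orthogonal families of minimal partial isometries componentwise and reading the required orthogonality $f_{k,i}\perp g_{k,j}$ off the inclusion $\overline{y_{\lb_k}(H_{\lb_k})}\subseteq\ker(x_{\lb_k})$. The one genuine difference is in sourcing: where the paper invokes \cite[Remarks 4.5 and 4.6]{BuChu} to obtain both the norm-density of $\Span(MinPI(A))$ and a globally convergent decomposition $x=\sum_n\af_nu_n$ into mutually orthogonal minimal partial isometries of the whole $c_0$-sum, you rebuild these from the singular value decomposition of compact operators on each $H_\lb$ and then amalgamate by hand, via a diagonal enumeration with an explicit remainder estimate (using that a sum of mutually orthogonal terms $\sg_{k,i}e_{k,i}\otimes f_{k,i}$ has norm $\sup_i\sg_{k,i}$, combined with the $c_0$-decay across summands). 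This makes your argument more self-contained and elementary, at the cost of the convergence bookkeeping; the paper's citation of the JB$^*$-triple spectral decomposition buys a shorter proof and sidesteps the re-enumeration entirely. Both routes are valid, and your handling of the two-parameter limit in the remainder estimate is sound.
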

 \begin{proof}
  Set $S=MinPI (A).$ By \cite[Remark 4.6]{BuChu},  $\Span(S)$ is norm dense in $A$, so \cref{defn-zp-dense}\cref{span(S)=A} holds. As to, \cref{defn-zp-dense}\cref{exists-v-with-uv=0}, by \cref{MinPi(A)=union-MinPI(A_lb)} every $u\in S$ belongs to $K(H_\lb)$ for some $\lb$. If there exists another direct summand $K(H_\mu)$ of $A$, then one can choose $v\in K(H_\mu)\cap S$, so that $uv=0$. Otherwise, $A=K(H)$ for some Hilbert space $H$ of dimension $>1$ and $u=e\otimes f$ for some $e,f\in H$ with $\|e\|=\|f\|=1$, so choosing $g\in H$ with $g\perp f$ (which is possible, as $\dim(H)>1$) and defining $v=g\otimes g\in S$, we have $uv=0$ by \cref{lem partial isometries}\cref{uv=0<=>f_1-perp-e_2}.
  
  The proof of \cref{defn-zp-dense}\cref{xy=0<=>u_iv_j=0} will be split into two cases.
  
 \textit{Case 1.} $A=K(H)$. Take $x,y\in K(H)$. By \cite[Remark 4.5]{BuChu} and \cref{lem partial isometries}\cref{u=e-otimes-f}\cref{u-perp-v<=>e_1-perp-e_2-and-f_1-perp-f_2} we have $x=\sum_n \alpha_n f_n\otimes g_n$ and $y=\sum_n \beta_n h_n\otimes k_n$ with $\{f_n\}, \{g_n\},\{h_n\} ,\{k_n\}\sst H$ orthonormal. Assume that $xy=0.$ Then $y(H)\subseteq \ker(x),$ so $\Span\{h_n\}\subseteq y(H)\subseteq \overline{\Span\{g_n\}}^{\perp}$ and hence $h_i \perp g_j$ for all $i,j$ which implies $(f_i\otimes g_i) (h_j\otimes k_j)=0 $ for all $i,j.$ 
 
 \textit{Case 2.} $A=\bigoplus_{\lambda}^{c_0} K(H_{\lambda})$. Take $x,y\in A$. By \cite[Remark 4.5]{BuChu} $x=\sum_{n=1}^{\infty} \alpha_n u_n,\; y=\sum_{n=1}^{\infty} \beta_n v_n $ with $u_n,v_n\in S,(\alpha_n),(\beta_n)\subseteq \mathbb{R}^{+}$,  $\{u_n\}$ and $\{v_n\}$ mutually orthogonal.
 
 In this case $xy=0$ if and only if $\pi_{\lambda} (x)\pi_{\lambda}(y)=0$ for all ${\lambda}.$ Let us fix ${\lambda}.$ Then $\pi_{\lambda}(x)=\sum_{n=1}^{\infty} \alpha_n \pi_{\lambda}(u_n),\; \pi_{\lambda}(y)=\sum_{n=1}^{\infty} \beta_n \pi_{\lambda}(v_n) $ where $\pi_{\lambda}(u_n)$ is either $u_n$ (if $u_n\in A_{\lambda}$) or $0$, and the same holds for $\pi_{\lambda}(v_n)$. Applying Case 1 to $\pi_{\lambda}(x),\pi_{\lambda}(y)\in K(H_\lb)$, we conclude that $\pi_\lb(u_nv_m)=\pi_{\lambda}(u_n)\pi_{\lambda}(v_m)=0$ for all $n,m.$ Since $\lb$ was arbitrary, $u_nv_m=0$.
 \end{proof}
 
 \begin{defn}
     Let $A=\bigoplus_\lb^{c_0} K(H_\lb)$ be a compact C$^*$-algebra and $c\in A$. We say that $c$ satisfies the \textit{rank hypothesis}  if 
     \begin{enumerate}[label=(RH)]
         \item\label{RH} $\rank(\pi_\lb(c) )\le\dim(H_\lb)-2$ for all $\lb$.
     \end{enumerate}
 \end{defn}

\begin{lem}\label{lem compact star factor thu zp}
Let $A=\bigoplus_\lb^{c_0} K(H_\lb)$ be a compact C$^*$-algebra and $c\in A$ satisfying \cref{RH}. Then $c$ factorizes through zero products in $MinPI(A).$
\end{lem}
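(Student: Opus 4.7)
The plan is to build $a,b\in A$ component-by-component from singular value decompositions of the parts $\pi_\mu(c)$, with the initial and final subspaces chosen so that the orthogonality constraints $ay=0$ and $xb=0$ become automatic.

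Given $x,y\in MinPI(A)$ with $xy=0$, \cref{MinPi(A)=union-MinPI(A_lb)} provides indices $\lambda_1,\lambda_2$ with $x\in MinPI(K(H_{\lambda_1}))$ and $y\in MinPI(K(H_{\lambda_2}))$; by \cref{lem partial isometries}\cref{u=e-otimes-f} we write $x=e_1\otimes f_1$ and $y=e_2\otimes f_2$ with $\|e_i\|=\|f_i\|=1$, and when $\lambda_1=\lambda_2$ part \cref{uv=0<=>f_1-perp-e_2} of the same lemma forces $f_1\perp e_2$. For any $a=(a_\mu),b=(b_\mu)\in A$ a direct computation shows $ay=(a_{\lambda_2}e_2)\otimes f_2$ and $xb=e_1\otimes(b_{\lambda_1}^{*}f_1)$, so $ay=0$ reduces to $e_2\in\ker(a_{\lambda_2})$, and $xb=0$ reduces to $b_{\lambda_1}(H_{\lambda_1})\perp f_1$; both constraints touch only the $\lambda_i$-components.

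For every $\mu$ with $c_\mu:=\pi_\mu(c)\neq 0$, fix an SVD $c_\mu=\sum_{i=1}^{k_\mu}\sigma_i^{(\mu)}\,g_i^{(\mu)}\otimes h_i^{(\mu)}$ with $k_\mu=\rank(c_\mu)$ and orthonormal families $\{g_i^{(\mu)}\},\{h_i^{(\mu)}\}$. Let $W_\mu\subseteq H_\mu$ be $e_2^\perp\cap f_1^\perp$ if $\mu=\lambda_1=\lambda_2$, $e_2^\perp$ if $\mu=\lambda_2\neq\lambda_1$, $f_1^\perp$ if $\mu=\lambda_1\neq\lambda_2$, and $H_\mu$ otherwise. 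In every case $W_\mu$ has codimension at most $2$, so \cref{RH} yields $\dim W_\mu\ge\dim H_\mu-2\ge k_\mu$, and we may choose an orthonormal family $\{v_i^{(\mu)}\}_{i=1}^{k_\mu}\subseteq W_\mu$.

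Now set $a_\mu:=\sum_i\sqrt{\sigma_i^{(\mu)}}\,g_i^{(\mu)}\otimes v_i^{(\mu)}$ and $b_\mu:=\sum_i\sqrt{\sigma_i^{(\mu)}}\,v_i^{(\mu)}\otimes h_i^{(\mu)}$, with $a_\mu=b_\mu=0$ when $c_\mu=0$. Since $\sigma_i^{(\mu)}\to 0$ as $i\to\infty$, each of $a_\mu,b_\mu$ lies in $K(H_\mu)$ with $\|a_\mu\|=\|b_\mu\|=\sqrt{\|c_\mu\|}\to 0$, so $a:=(a_\mu),b:=(b_\mu)\in A$. Orthonormality of the $v_i^{(\mu)}$ gives $a_\mu b_\mu=c_\mu$ in each coordinate, hence $ab=c$, while the choice of $W_{\lambda_2}$ and $W_{\lambda_1}$ forces $a_{\lambda_2}e_2=0$ and $b_{\lambda_1}(H_{\lambda_1})\subseteq\Span\{v_i^{(\lambda_1)}\}\subseteq f_1^\perp$, so $ay=0$ and $xb=0$. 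The crux, and the only place where the full strength of \cref{RH} is used, is the coincidence case $\lambda_1=\lambda_2$: there both orthogonality constraints fall on the same component and force $W_{\lambda_1}$ to have codimension exactly $2$, whereas the disjoint cases would already work under the weaker bound $k_\mu\le\dim H_\mu-1$.
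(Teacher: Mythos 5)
Your proof is correct and rests on the same core construction as the paper's: decompose each $\pi_\mu(c)$ as $\sum_i \sigma_i\, g_i\otimes h_i$, insert an intermediate orthonormal family chosen inside the orthogonal complement of the relevant initial/final vectors of $x$ and $y$, and use \cref{RH} to guarantee that complement is large enough. The only difference is organizational: the paper handles the direct sum by splitting into Cases 2.1--2.4 according to which summands contain the two partial isometries and whether $\pi_\mu(c)$ vanishes there, whereas you absorb all of that into the single componentwise definition of $W_\mu$ --- a cleaner packaging of the same argument.
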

 \begin{proof}
    Let us fix two minimal partial isometries $u,v\in A$ such that $uv=0.$ We are going to find $a,b\in A$ such that $av=ub=0$ and $ab=c.$ 
    
    \textit{Case 1}. $A=K(H).$  By \cref{u=e-otimes-f,uv=0<=>f_1-perp-e_2} of \cref{lem partial isometries} there exist norm-one vectors $x_1,x_2,y_1,y_2\in H$  such that $u=x_1\otimes x_2$, $v=y_1 \otimes y_2$ and $x_2\perp y_1$.
  
   By \cite[Remark 4.5]{BuChu} and \cref{lem partial isometries}\cref{u=e-otimes-f}  there exist $\{\alpha_n\}\in c_0^+$  and 
    orthonormal $\{e_n\},\{f_n\}\sst H$ such that 
    $$
    c=\sum_{n=1}^{\infty} \alpha_n e_n\otimes f_n.
    $$ 
    If $r=\dim(H)<\infty$, set $k=\rank(c)$ and $I=\{1,\dots,k\}$. It follows that $\dim( \{x_2,y_1\}^{\perp})=r-2.$
  Recall that by hypothesis $k\leq r-2$, thus we can take an orthonormal  set $\{h_n\}_{n\in I} \sst \{x_2,y_1\}^{\perp}.$ If $\dim(H)=\infty$, we let $I=\mathbb{N}$ and take $\{h_n\}_{n\in I} \subseteq \{x_2,y_1\}^{\perp}$ an orthonormal system. 
 Let 
   $$
   a=\sum_{n\in I} \alpha_n^{\frac{1}{2}} (e_n\otimes h_n).
   $$
    Observe that the series above converges by \cite[Remark 7]{BurFerGarPer_Aut_Cont_2011}. Since $v(H)=\CC y_1$ and $y_1\perp h_i$, $i\in I$, we have $av=0.$ Now define 
   $$
   b=\sum_{n\in I} \alpha_n^{\frac{1}{2}}( h_n\otimes f_n).
   $$
   Since $\ker(u)= \{x_2\}^{\perp}\supseteq\{h_n\}_{n\in I}$, it is clear that $ub=0$ holds. Moreover, for any $x\in H$ we have
   $$
   (e_i\otimes h_i)(h_j\otimes f_j)x=\langle x,f_j\rangle(e_i\otimes h_i)h_j=\langle x,f_j\rangle\langle h_j,h_i\rangle e_i=\langle x,f_j\rangle\delta_{ij} e_i=\delta_{ij}(e_i\otimes f_j)x,
   $$
   whence
   $$
   ab=\sum_{n\in I}^{\infty} \alpha_n e_n\otimes f_n=c.
   $$
   Thus, $c$ factorizes through zero products in $S.$

 \textit{Case 2.} $A=\bigoplus_\lb^{c_0} K(H_\lb)$, where each $H_\lb$ is a complex Hilbert space. The proof is divided into several cases.
   
   \textit{Case 2.1.} $u,v\in K(H_\mu).$  If $\pi_\mu(c)=0$ then $c\in \bigoplus_{\lb\neq \mu}^{c_0} K(H_\lb)$ and for any $a,b \in K(H_\mu)^{\perp}$ such that $ab=c$ we have $av=ub=0$. Let us suppose that $ \pi_\mu(c)\neq 0.$ By the assumptions on $c$ and Case 1, there exist $a_\mu,b_\mu\in K(H_\mu)$ such that $a_\mu v=u b_\mu=0$ and $a_\mu b_\mu=\pi_\mu(c).$ We can also find $a,b\in K(H_\mu)^{\perp}$ such that $ab=c-\pi_\mu(c).$  We have $(a+a_\mu)v=u(b+b_\mu)=0 $ and $(a+a_\mu)(b+b_\mu)=c.$
   
  \textit{Case 2.2.} $u\in K(H_\mu),v \in K(H_\nu)$ with $\mu\neq \nu$ such that $\pi_\mu(c)=0$ and $\pi_\nu(c)=0$. We can find $a,b\in \bigoplus_{\lb \neq \mu,\nu}^{c_0} K(H_\lb)$ such that $ab=c$. Clearly, $av=0$ and $ub=0$ hold.

 \textit{Case 2.3.} $u\in K(H_\mu),v \in K(H_\nu)$ such that  $\pi_\mu(c) \neq  0$ and $\pi_\nu(c)=  0$. We can find $v'\in MinPI(K(H_\mu))$ such that $uv'=0.$ There exist $a_\mu,b_\mu\in K(H_\mu)$ such that $ub_\mu=0,a_\mu v'=0$ and $a_\mu b_\mu =\pi_\mu(c)$. There also exist $a',b'\in \bigoplus_{\lb \neq \mu,\nu}^{c_0} K(H_\lb)$ such that $a'b'=c-\pi_\mu(c)$. We have $u(b_\mu+b')=0,(a_\mu+a')v=0$ and $(a_\mu+a')(b_\mu+b')=c.$

 \textit{Case 2.4.}  $u\in K(H_\mu),v \in K(H_\nu)$ such that  $\pi_\mu(c)\neq 0$, $\pi_\nu(c)\neq  0$ and $\mu\ne \nu$. There exist $a,b\in \big(K(H_\mu)\oplus K(H_\nu)\big)^{\perp}$ such that $ab=c-\pi_\mu(c)-\pi_\nu(c).$  We can also find $a_\mu,b_\mu\in K(H_\mu), a_\nu,b_\nu\in K(H_\nu)$ such that $u b_\mu=0,$ $a_\mu b_\mu=\pi_\mu(c),$ $b_\nu v=0$ and $a_\nu b_\nu =\pi_\nu (c).$ We have $(a+a_\mu+a_\nu )v=u(b+b_\mu+b_\nu)=0$ and  $(a+a_\mu+a_\nu)(b+b_\mu+b_\nu)=c.$
 \end{proof}

 
\begin{thrm}\label{thm main compact c*}
 A compact C$^*$-algebra $A=\bigoplus_\lb^{c_0} K(H_\lb)$ is determined (as a Banach algebra) by products at any element $c\in A$ satisfying \cref{RH}. In particular, if $A$ has no finite-dimensional summands, then $A$ is determined by products at any element (as a Banach algebra).
\end{thrm}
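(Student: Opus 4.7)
The plan is to combine the three preceding ingredients directly. Let $V\colon A\times A\to X$ be a bounded bilinear map into a normed space $X$ having product property at some $c\in A$ satisfying \cref{RH}. First, observe that \cref{RH} cannot hold when $\dim(A)=1$: in that case $A\cong\CC$ and $\rank(c)\in\{0,1\}$, which is never at most $\dim(\CC)-2=-1$. Hence the statement is vacuous when $\dim(A)=1$, and we may assume $\dim(A)>1$.

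The argument then proceeds in three steps. First, \cref{lem S zp-dense in compact C*} gives that $S:=MinPI(A)$ is zp-dense in $A$. Second, \cref{lem compact star factor thu zp} gives that $c$ factorizes through zero products in $S$. These are precisely the hypotheses of \cref{prop Jilius dec implies zpd}, whose conclusion is that $V$ has product property at zero.

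To conclude the first assertion, recall from the preliminaries that every $C^*$-algebra is a zpd Banach algebra. Consequently, any bounded bilinear map with product property at zero in fact has product property, and applying this to $V$ gives that $A$ is determined (as a Banach algebra) by products at $c$.

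For the ``in particular'' clause, suppose that $A$ has no finite-dimensional summands, so that each $H_\lb$ is infinite-dimensional. Then $\dim(H_\lb)-2$ coincides with $\dim(H_\lb)$ in cardinal arithmetic, so the inequality $\rank(\pi_\lb(c))\le \dim(H_\lb)-2$ holds automatically for every $c\in A$ and every $\lb$. Hence every $c\in A$ satisfies \cref{RH} and the first part applies. The main obstacle is essentially nonexistent: the theorem is an assembly of \cref{lem S zp-dense in compact C*,lem compact star factor thu zp,prop Jilius dec implies zpd} with the general fact that $C^*$-algebras are zpd Banach algebras, the only minor point of care being the infinite-cardinal reading of \cref{RH} needed for the last statement.
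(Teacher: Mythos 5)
Your proof is correct and follows essentially the same route as the paper's: zp-density of $MinPI(A)$ via \cref{lem S zp-dense in compact C*}, factorization of $c$ through zero products via \cref{lem compact star factor thu zp}, then \cref{prop Jilius dec implies zpd} and the fact that every C$^*$-algebra is a zpd Banach algebra. Your additional remarks --- the vacuity when $\dim(A)=1$ (which secures the dimension hypothesis of \cref{lem S zp-dense in compact C*}) and the cardinal-arithmetic reading of \cref{RH} for the ``in particular'' clause --- are correct and, if anything, slightly more careful than the paper's own write-up.
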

\begin{proof}
By \cref{lem S zp-dense in compact C*} the set $S=MinPI(A)$ is zp-dense in $A$, and by \cref{lem compact star factor thu zp} the element $c$ factorizes through zero products in $S$. Let $X$ be a Banach space and $V:A\times A\to X$ be a bounded bilinear map that has product property at $c$. By \cref{prop Jilius dec implies zpd} the map $V$ has product property at zero. Now by Theorem 5.19 together with Proposition 4.9 from  \cite{Bresar-zpd} there exists a bounded linear map $\vf:A\to X$ such that $V(ab)=\vf(ab)$ for all $a,b\in A.$
\end{proof}

\section{Product property at compact elements in von Neumann algebras}\label{sec-von-Neumann}

 Now we turn our attention to von Neumann algebras. We start with the case of bilinear maps that have product property at a finite-rank element. In this case, continuity can be relaxed.

 \begin{defn}\label{defn-generalized-fact-through-c}
  Let $A$ be an algebra, $c\in A$ and $x,y\in A$ with $xy=0$. We say that $c$ \emph{has a generalized factorization through $(x,y)$} if one of the following holds. 
  \begin{enumerate}
      \item\label{def gen fact 2} There exist $x',x''\in A$ such that $x=x'+x''$ and $c$ factorizes through $(x',y)$ and $(x'',y)$.
      \item\label{def gen fact 1} There exist $y',y''\in A$ such that $y=y'+y''$ and $c$ factorizes through  $(x,y')$ and $(x,y'')$.
  \end{enumerate}
 Let $S\subseteq A$ and $c\in A.$ We say that $c$ \emph{has a generalized factorization through zero products in $S$} if $c$ has a generalized factorization through all $(x,y)$ with $x,y\in S$ and $xy=0$. 
\end{defn}
 
 The proof of the following lemma is a part of the proof of \cite[Proposition 2.1]{Julius-JMAA-22}.
 
 \begin{lem}\label{lem technical Julius}
 Let $X$ be an infinite-dimensional Banach space and $c\in B(X)$ a finite-rank operator. Then $c$ has generalized factorization through zero products in $B(X)$.
 \end{lem}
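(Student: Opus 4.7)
My plan is to isolate a clean sufficient condition for $c$ to factorize directly through $(x,y)$, observe that whenever it holds the generalized factorization follows by a trivial split, and then dispose of the remaining case by decomposing $x$ or $y$ using the infinite dimensionality of $X$.

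Fix a rank--one decomposition $c=\sum_{i=1}^{k}e_i\otimes\phi_i$ with $\{e_i\}\sst X$, $\{\phi_i\}\sst X^*$ linearly independent, and $k=\rank(c)$; fix also $x,y\in B(X)$ with $xy=0$, so $\overline{\im(y)}\sst\ker(x)$. The first step is a direct factorization criterion: if one can find $h_1,\dots,h_k\in\ker(x)$ whose images in $X/\overline{\im(y)}$ are linearly independent, then Hahn--Banach supplies $\psi_1,\dots,\psi_k\in X^*$ with $\psi_j|_{\overline{\im(y)}}=0$ and $\psi_j(h_i)=\dl_{ij}$, whence $a=\sum e_i\otimes\psi_i$ and $b=\sum h_i\otimes\phi_i$ satisfy $ay=0$, $xb=0$ and $ab=c$. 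In this situation the trivial split $x=\tfrac12 x+\tfrac12 x$ (both halves retaining $\ker(x)$) already yields a generalized factorization.

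The remaining case is $\dim(\ker(x)/\overline{\im(y)})<k$. If $\im(y)$ is infinite-dimensional, I split $y$: pick a $k$-dimensional $N\sst\ker(x)$, a closed complement $Z$ of $N$ in $X$, and the associated bounded projection $P:X\to N$ along $Z$; set $y_1=(I-P)y$ and $y_2=Py$. Then $\im(y_1)\sst Z$, $\im(y_2)\sst N$, and the identity $\ker(x)=(\ker(x)\cap Z)\oplus N$ shows that $\dim\bigl(\ker(x)/(\ker(x)\cap\overline{\im(y_i)})\bigr)\ge k$ for $i=1,2$, so the criterion applies to both pieces. If instead $\im(y)$ is finite-dimensional, the obstruction forces $\ker(x)$ to be finite-dimensional as well, and I split $x$: fix a closed complement $W$ of $\ker(x)$ (necessarily infinite-dimensional), a $k$-dimensional $V_1\sst W$ with closed complement $V_2\sst W$, and the associated projections $P_1,P_2=I_W-P_1$. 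Extending them by zero on $\ker(x)$ produces $\tilde P_1,\tilde P_2\in B(X)$, and $x=x\tilde P_1+x\tilde P_2$. A direct check gives $\ker(x\tilde P_1)=\ker(x)+V_2$ and $\ker(x\tilde P_2)=\ker(x)+V_1$, each of which leaves $k$-dimensional room over $\im(y)$, so the criterion applies to both pieces.

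The main technical point will be verifying that this dichotomy on $\dim\im(y)$ is exhaustive and that the bounded splittings actually exist. Both rest on the fact that finite-dimensional subspaces are always topologically complemented in a Banach space, together with the Hahn--Banach extension that produces the functionals $\psi_j$. Once those ingredients are in hand, the case analysis above---following the strategy of \cite[Proposition~2.1]{Julius-JMAA-22}---completes the proof.
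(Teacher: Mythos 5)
Your argument is correct and is essentially the argument behind the result: the paper itself gives no proof of this lemma, deferring entirely to \cite[Proposition 2.1]{Julius-JMAA-22}, and your dichotomy on $\dim\bigl(\ker(x)/\overline{\im(y)}\bigr)$ versus $\rank(c)$, with the direct rank-one factorization in the large-quotient case and the bounded finite-rank splittings of $y$ (resp.\ $x$) otherwise, is precisely the strategy of that cited proof. The only detail worth writing out explicitly is that the split pieces still satisfy the zero-product requirement built into \cref{defn-fact-through-c} (i.e.\ $xy_1=xy_2=0$ since $\im(Py)\subseteq N\subseteq\ker(x)$, and $x\tilde P_iy=0$ since $\tilde P_i$ vanishes on $\ker(x)\supseteq\im(y)$); this is immediate from your constructions but is needed before invoking \cref{lem fact thru zp xy=0 implies V(x_y)=0} downstream.
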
 

\begin{lem}\label{lem gen zp factoriz implies pp at zero}
Let $A$ be an algebra,  $X$ a vector space and $V:A\times A\to X$ a bilinear map. Let $c\in A$ and $x,y\in A$ such that $xy=0$. Suppose that  $c$ has generalized factorization through $(x,y)$. Then $V(x,y)=0$.
\end{lem}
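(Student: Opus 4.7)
The plan is to reduce this lemma directly to \cref{lem fact thru zp xy=0 implies V(x_y)=0}, which already handles ordinary (non\nobreakdash-generalized) factorization. Since \cref{defn-generalized-fact-through-c} offers two symmetric cases -- splitting $x$ or splitting $y$ -- I would treat one of them and note that the other is completely parallel. I read the hypothesis ``$V$ has product property at $c$'' as an implicit standing assumption carried over from \cref{lem fact thru zp xy=0 implies V(x_y)=0}, since without it the claimed conclusion cannot hold.

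Suppose first that case \cref{def gen fact 2} of \cref{defn-generalized-fact-through-c} applies, so that we can write $x = x' + x''$ with $c$ factorizing through $(x',y)$ and through $(x'',y)$. By \cref{defn-fact-through-c}, this in particular forces $x'y = 0$ and $x''y = 0$, so both of the pairs $(x',y)$ and $(x'',y)$ meet the hypothesis of \cref{lem fact thru zp xy=0 implies V(x_y)=0}. Applying that lemma twice yields $V(x',y) = 0$ and $V(x'',y) = 0$, and then linearity of $V$ in the first argument gives
\[
V(x,y) = V(x',y) + V(x'',y) = 0.
\]

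The case \cref{def gen fact 1} is handled in the same manner after interchanging the roles of the two variables: write $y = y'+y''$, apply \cref{lem fact thru zp xy=0 implies V(x_y)=0} to $(x,y')$ and $(x,y'')$, and add using linearity in the second argument.

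I do not anticipate any genuine obstacle here. The lemma is really a bookkeeping extension of \cref{lem fact thru zp xy=0 implies V(x_y)=0}: all it says is that the hypothesis of factorization through $(x,y)$ may be broken additively in either coordinate and then recombined via bilinearity of $V$. The only thing to be mildly careful about is the direction of the split -- linearity must be applied in the variable that was decomposed, and the two factorization conditions must be checked separately before summing.
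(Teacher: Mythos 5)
Your proof is correct and is essentially identical to the paper's: both cases are handled by applying \cref{lem fact thru zp xy=0 implies V(x_y)=0} to each summand of the decomposed variable and then invoking bilinearity. Your observation that the hypothesis ``$V$ has product property at $c$'' must be read as an implicit standing assumption is also accurate --- the paper's statement omits it, but its proof (like yours) relies on it through \cref{lem fact thru zp xy=0 implies V(x_y)=0}.
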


 \begin{proof}
 \textit{Case 1.} \cref{defn-generalized-fact-through-c}\cref{def gen fact 2} holds.
    There exist $x',x''\in A$ such that $x=x'+x''$  and $c$ factorizes through  $(x',y)$ and $(x'',y)$. It follows from \cref{lem fact thru zp xy=0 implies V(x_y)=0} that $V(x',y)=V(x'',y)=0$ and hence $V(x,y)=V(x',y)+V(x'',y)=0$.
    
\textit{Case 2.} \cref{defn-generalized-fact-through-c}\cref{def gen fact 1} holds. An analogous argument yields $V(x,y')=V(x,y'')=0$ and $V(x,y)=V(x,y')+V(x,y'')=0.$
 \end{proof}


\begin{lem}\label{lem gener fact direct sums}
Let $A_1,\dots, A_n$ be unital algebras, $0\ne c=(c_1,\dots, c_n)\in A_1 \oplus \dots \oplus A_n $ and for every $i$ let $S_i\subseteq A_i$ such that if $c_i\neq 0$ then $c_i$ has generalized factorization through zero products in $S_i$. We have
\begin{enumerate}
   \item\label{lem gener fact direct sums 1}  $c$ has generalized factorization through zero products in every $S_i$;
    \item\label{lem gener fact direct sums 2} if $n\ge 2$, $i\neq j$ and $x\in  S_i,y\in S_j$ such that $Ann_r(x)\cap S_i\neq \{0\}, Ann_l(y)\cap S_j\neq \{0\}$, then one of the following holds:
    \begin{enumerate}
        \item\label{c-gen-factor-through-(x_y)}  $c$ has generalized factorization through $(x,y)$;
        \item\label{c-factor-through-(x'_y')-etc} there exist  $x',x''\in A_i, y',y''\in A_j$ such that $x=x'+x'',y=y'+y''$ and $c$ factorizes through $(x',y'), (x',y''), (x'',y')$ and $(x'',y'')$.
    \end{enumerate}
   \end{enumerate}
   \end{lem}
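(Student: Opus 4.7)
The plan is to reduce both parts to the factorwise hypothesis on each $c_i$ by exploiting that $A_1 \oplus \dots \oplus A_n$ is unital and that elements lying in different summands annihilate one another. The mechanical tool throughout is: for any subset $K$ of indices we may pick $\tilde a, \tilde b \in \bigoplus_{k \in K} A_k$ whose product equals the corresponding truncation of $c$, and such $\tilde a, \tilde b$ automatically kill anything living outside their components.

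For \cref{lem gener fact direct sums 1}, fix $i$ and $x, y \in S_i$ with $xy = 0$. If $c_i = 0$, then $c$ already lies in $\bigoplus_{k \neq i} A_k$, whose unit gives a direct factorization $c = c \cdot 1$, trivially a generalized one. If $c_i \neq 0$, I would apply the hypothesis on $S_i$ to obtain, say, a splitting $x = x'_i + x''_i$ with local factorizations $c_i = a'_i b'_i = a''_i b''_i$ satisfying the annihilation conditions from \cref{defn-fact-through-c}, pick $\tilde a, \tilde b \in \bigoplus_{k \neq i} A_k$ with $\tilde a \tilde b = c - c_i$, and glue: $a' := \tilde a + a'_i$ and $b' := \tilde b + b'_i$ (and analogously for the double-primed pair). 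Cross-component products vanish, so $a' b' = c$, $a' y = 0$, $x'_i b' = 0$; the symmetric case where $y$ splits is identical.

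For \cref{lem gener fact direct sums 2}, first note that $xy = 0$ automatically. The role of nonzero $v \in \mathrm{Ann}_r(x) \cap S_i$ and $w \in \mathrm{Ann}_l(y) \cap S_j$ is to supply partners inside $S_i$ and $S_j$ so that the hypothesis applies to $c_i$ through $(x, v)$ and to $c_j$ through $(w, y)$. The argument then branches on which of $c_i, c_j$ vanish. When both vanish, a direct factorization in $\bigoplus_{k \neq i, j} A_k$ yields \cref{c-gen-factor-through-(x_y)}; when exactly one is nonzero, the generalized factorization on that summand, glued with the complementary truncation, again produces \cref{c-gen-factor-through-(x_y)}.

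The substantive case is $c_i, c_j \neq 0$, which breaks into four subcases according to whether each generalized factorization splits the ``outside'' element ($x$ or $y$) or the auxiliary partner ($v$ or $w$). Only the ``both outside'' subcase yields \cref{c-factor-through-(x'_y')-etc}: when $x = x'_i + x''_i$ and $y = y'_j + y''_j$, combining the four pairs of local factorizations of $c_i$ and $c_j$ with a common $\tilde a \tilde b = c - c_i - c_j$ gives the four required factorizations of $c$. In the mixed subcases (for instance, $x$ splits but on the other side $w$ rather than $y$ splits), each factorization $c_j = p'_j q'_j$ arising from the $w$-splitting already satisfies $p'_j y = 0$ --- the $w$-conditions being irrelevant for us --- so splitting only $x$ globally produces \cref{c-gen-factor-through-(x_y)}; the symmetric mixed case is analogous, and in the ``both auxiliary'' subcase one gluing gives a direct factorization of $c$ through $(x, y)$. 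The main obstacle is purely bookkeeping: tracking which alternative of \cref{defn-generalized-fact-through-c} is invoked on each side, and checking that the identity $A_i \cdot A_j = 0$ for $i \neq j$ kills every unwanted cross term.
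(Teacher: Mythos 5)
Your proposal is correct and follows essentially the same route as the paper's proof: glue local (generalized) factorizations of $c_i$ and $c_j$ with the truncations $1_A-\sum 1_{A_k}$ and $c-\sum c_k$ on the complementary summands, use the auxiliary annihilating elements to invoke the hypothesis on $(x,z)$ and $(w,y)$, and observe that only the subcase where both $x$ and $y$ split produces alternative (ii)(b), all other subcases collapsing to (ii)(a) because the conditions on the auxiliary elements are discarded. The case analysis you outline matches the paper's Cases 1--4 exactly.
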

\begin{proof}
Throughout the proof we will use the following short notations: given $z\in A$, for any $1\le i\le n$ write $z_{\neq i}:=z-z_i$, and for any pair $1\le i\ne j\le n$ write $z_{\neq i,j}:=z-z_i-z_j$. For short, write also $1_{\neq i}:=1_A-1_{A_i}$ and $1_{\neq i,j}:=1_A-1_{A_i}-1_{A_j}$.

\cref{lem gener fact direct sums 1}. Without loss of generality we can assume that $n>1.$ Fix $i$ and take $x,y\in S_i$ such that $xy=0$. 

If $c_i=0$ then we have 
 $$
1_{\neq i}y=xc=0 \mbox{ and } 1_{\neq i}c=c_{\neq i}=c.
 $$
 Thus in this case $c$ factorizes through $(x,y)$.
 
Now suppose that $c_i\neq 0.$ In this case we use generalized factorization of $c_i$ through $(x,y)$ inside $A_i.$ 
  

\textit{Case 1.}  There exist $a_{1},a_{2},b_{1},b_{2},x',x''\in A_i$  such that $x=x'+x''$, $a_{1} y=a_{2} y=x'b_{1}=x''b_{2}=0$ and $a_{1} b_{1} =a_{2} b_{2}=c_i.$ We have 
\begin{align*}
(a_{1}+1_{\neq i} ) y=(a_{2}+1_{\neq i} ) y=x'(b_{1}+c_{\neq i})=x''(b_{2}+c_{\neq i})=0    
\end{align*}
and 
\begin{align}\label{(a_1+1_neq i)(b_1+c_neq i)=c}
    (a_{1}+1_{\neq i} ) (b_{1}+c_{\neq i}) =(a_{2}+1_{\neq i} ) (b_{2}+c_{\neq i})=c.    
\end{align}

\textit{Case 2}. There exist  $a_{1},a_{2},b_{1},b_{2},y',y''\in A_i$  such that $y=y'+y''$, $a_{1} y'=a_{2} y''=xb_{1}=xb_{2}=0$ and $a_{1} b_{1} =a_{2} b_{2}=c_i.$ Then
\begin{align*}
    (a_{1}+1_{\neq i} ) y'=(a_{2}+1_{\neq i}) y''=x(b_{1}+c_{\neq i})=x(b_{2}+c_{\neq i})=0,
\end{align*}
and the same equality \cref{(a_1+1_neq i)(b_1+c_neq i)=c} holds.

In any case, $c$ has  generalized factorization through $(x,y).$

\cref{lem gener fact direct sums 2}. Fix $i\ne j$ and $x\in  S_i,y\in S_j$ with $Ann_r(x)\cap S_i\neq \{0\}, Ann_l(y)\cap S_j\neq \{0\}$.


If $c_i=c_j=0$, then $c$ factorizes through $(x,y)$, because 
$$
1_{\neq i,j} y=x c_{\neq i,j}=0 \mbox{ and } 1_{\neq i,j}c_{\neq i,j}=c.
$$

If $c_i\neq 0$ and $c_j=0$, then take $z\in Ann_r(x)\cap S_i, z\neq 0$. We use generalized factorization of $c_i$ through $(x,z)$ inside $A_i$. 


\textit{Case 1}.  There exist $a_{1},a_{2},b_{1},b_{2},x',x''\in A_i$  such that $x=x'+x''$, $a_{1} z=a_{2} z=x' b_{1}=x'' b_{2}=0$ and $a_{1} b_{1} =a_{2} b_{2}=c_i.$ We have
\begin{align*}
  (a_{1}+1_{\neq i,j}) y=(a_{2}+1_{\neq i,j} ) y=x'(b_{1}+c_{\neq i,j})=x''(b_{2}+c_{\neq i,j})=0  
\end{align*}
 and 
 \begin{align}\label{(a_1+1_neq i_j)(b_1+c_neq i_j}
  (a_{1}+1_{\neq i,j} ) (b_{1}+c_{\neq i,j}) =(a_{2}+1_{\neq i}  ) (b_{2}+c_{\neq i,j} )=c.   
 \end{align}

\textit{Case 2}. There exist  $a_{1},a_{2},b_{1},b_{2},z',z''\in A_i$ such that $z=z'+z''$ and  $a_{1}z'=a_{2}z''=x b_{1}=x b_{2}=0.$ 
We have 
\begin{align*}
  (a_{1}+1_{\neq i,j}  ) y=(a_{2}+1_{\neq i,j} ) y=x(b_{1}+c_{\neq i,j} )=x(b_{2}+c_{\neq i,j})=0,  
\end{align*}
 and the same equality \cref{(a_1+1_neq i_j)(b_1+c_neq i_j} holds.

Thus, whenever $c_i\neq 0$ and $c_j=0$, we fall into \cref{c-gen-factor-through-(x_y)}.

For the rest of the proof we suppose that $c_i\neq 0,c_j\neq 0$.

 We take $z\in Ann_r(x)\cap S_i, z\neq 0 $  and $w\in Ann_l(y)\cap S_j, w\neq 0 .$ We use  generalized factorization (inside $A_i$) of $c_i$ through $(x,z)$ to split the proof in cases, and generalized factorization (inside $A_j$) of $c_j$ through $(w,y)$ to split each case in sub-cases.

 \textit{Case 3}. There exist $a_{i,1},a_{i,2},b_{i,1},b_{i,2},x',x''\in A_i$ with $x=x'+x''$, such that $a_{i,1}z=a_{i,2}z=x'b_{i,1}=x''b_{i,2}=0$ and  $a_{i,1} b_{i,1}=a_{i,2}b_{i,2}=c_i$.

  \textit{Case 3.1}.   There exist $a_{j,1},a_{j,2},b_{j,1},b_{j,2}, y',y''\in A_j$  such that $y=y'+y''$, $a_{j,1}y'=a_{j,2}y''=w b_{j,1}=wb_{j,2}=0$ and $a_{j,1} b_{j,1}=a_{j,2} b_{j,2}=c_j$.
  
   We define 
   $a_{p,q}'=a_{i,p}+a_{j,q}+1_{\neq \{i,j\} }$ and $b_{p,q}'=b_{i,p}+b_{j,q}+c_{\neq \{i,j\} }$ for $p,q\in \{1,2\}$. We have 
   \begin{align*}
       a_{1,1}'y'= x' b_{1,1}'=a_{1,2}'y''= x' b_{1,2}'=0,\; a_{1,1}'b_{1,1}'=a_{1,2}'b_{1,2}'=c,\\
       a_{2,1}'y'=x'' b_{2,1}'=a_{2,2}'y''=x'' b_{2,2}'=0,\; a_{2,1}'b_{2,1}'=a_{2,2}'b_{2,2}'=c,
   \end{align*}
   whence \cref{c-factor-through-(x'_y')-etc}.
   
\textit{Case 3.2}.
     There exist $a_{j,1},a_{j,2},b_{j,1},b_{j,2},w',w''\in A_j$  such that $w=w'+w'$, $a_{j,1} y=a_{j,2}y=w'b_{j,1}=w''b_{j,2}=0$ and $a_{j,1} b_{j,1}=a_{j,2} b_{j,2}=c_j.$ By defining $a'=a_{i,1}+a_{j,1} +1_{\neq i,j },a''=a_{i,2}+a_{j,2} +1_{\neq i,j }, b'=b_{i,1}+b_{j,1} +c_{\neq i,j }$ and $b''=b_{i,2}+b_{j,2} +c_{\neq i,j }$, we have
      \begin{align*}
       a'y=a''y=x'b'=x'b''=0 \text{ and } a'b'=a''b''=c,
      \end{align*} 
whence \cref{c-gen-factor-through-(x_y)}.  


\textit{Case 4}. There exist $a_{i,1},a_{i,2},b_{i,1},b_{i,2},z',z''\in A_i$ with $z=z'+z''$, such that $a_{i,1}z'=a_{i,2}z''=xb_{i,1}=xb_{i,2}=0$ and  $a_{i,1} b_{i,1}=a_{i,2}b_{i,2}=c_i$.

Set $a_i=a_{i,1},b_i=b_{i,1}$.

 \textit{Case 4.1}.
     There exist $a_{j,1},a_{j,2},b_{j,1},b_{j,2}, y',y''\in A_j$  such that $y=y'+y''$, $a_{j,1}y'=a_{j,2}y''=w b_{j,1}=wb_{j,2}=0$ and $a_{j,1} b_{j,1}=a_{j,2} b_{j,2}=c_j$.
     
     By defining  $a'=a_i+a_{j,1} +1_{\neq i,j },a''=a_i+a_{j,2} +1_{\ne i,j }, b'=b_i+b_{j,1} +c_{\neq \{i,j\} }$ and $b''=b_i+b_{j,2} +c_{\neq i,j }$ we have
   \begin{align*}
       a'y'=a''y''=xb'=x b''=0  \mbox{ and }  a'b'=a''b''=c,
   \end{align*}
 whence \cref{c-gen-factor-through-(x_y)}. 
 
\textit{Case 4.2}.  There exist $a_{j,1},a_{j,2},b_{j,1},b_{j,2},w',w''\in A_j$  such that $w=w'+w'$, $a_{j,1} y=a_{j,2}y=w'b_{j,1}=w''b_{j,2}=0$ and $a_{j,1} b_{j,1}=a_{j,2} b_{j,2}=c_j.$
    
 By defining  $a'=a_i+a_{j,1} +1_{\neq i,j }, b'=b_i+b_{j,1} +c_{\neq \{i,j\} }$   we have 
  \begin{align*}
       a'y=xb'=0  \mbox{ and }  a'b'=c,
   \end{align*}
  whence \cref{c-gen-factor-through-(x_y)}.
\end{proof}

 \begin{lem}\label{V pp at c in A+B has pp at zero in A+B}
  Let $A_1,\dots, A_n$ be unital algebras, $0\ne c=(c_1,\dots, c_n)\in A_1 \oplus \dots \oplus A_n $ and for every $i$ let $S_i\subseteq A_i$ such that if $c_i\neq 0$ then $c_i$ has generalized factorization through zero products in $S_i$.
   
   Let $V:A\times A \to X$ be a bilinear map that has product property at $c.$ Then 
   \begin{enumerate}
       \item\label{pp at c in A+B has pp at zero in A+B 1} $V$ has product property at zero when restricted to each $S_i \times S_i,$
       \item\label{pp at c in A+B has pp at zero in A+B 2}If $n\geq 2$ and $i\neq j$, then $V(x,y)=0$ whenever $x\in S_i$ and $y\in S_j$ with $Ann_r(x)\cap S_i\neq \{ 0\}$ and $ Ann_l(y)\cap S_j\neq \{0\}$.
   \end{enumerate}
\end{lem}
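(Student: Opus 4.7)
The plan is to reduce both parts to \cref{lem gener fact direct sums} combined with \cref{lem gen zp factoriz implies pp at zero} (and, for part \cref{pp at c in A+B has pp at zero in A+B 2}, also with \cref{lem fact thru zp xy=0 implies V(x_y)=0}). These lemmas have done the combinatorial heavy lifting already; all that remains is to translate the two cases of \cref{lem gener fact direct sums}\cref{lem gener fact direct sums 2} into vanishing of $V$ by bilinearity.

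For part \cref{pp at c in A+B has pp at zero in A+B 1}, fix an index $i$ and take $x,y\in S_i$ with $xy=0$. By \cref{lem gener fact direct sums}\cref{lem gener fact direct sums 1}, $c$ has a generalized factorization through $(x,y)$. Since $V$ has product property at $c$, \cref{lem gen zp factoriz implies pp at zero} yields $V(x,y)=0$. This is exactly the statement that $V$ restricted to $S_i\times S_i$ has product property at zero.

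For part \cref{pp at c in A+B has pp at zero in A+B 2}, assume $n\ge 2$, $i\ne j$, and take $x\in S_i$, $y\in S_j$ with $Ann_r(x)\cap S_i\ne\{0\}$ and $Ann_l(y)\cap S_j\ne\{0\}$. Since the algebra is a direct sum and $x,y$ lie in orthogonal summands, $xy=0$ holds automatically. Invoke \cref{lem gener fact direct sums}\cref{lem gener fact direct sums 2}: either $c$ has a generalized factorization through $(x,y)$, in which case \cref{lem gen zp factoriz implies pp at zero} directly gives $V(x,y)=0$; or there exist decompositions $x=x'+x''\in A_i$ and $y=y'+y''\in A_j$ such that $c$ factorizes through each of the four pairs $(x',y')$, $(x',y'')$, $(x'',y')$, $(x'',y'')$. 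In this second alternative, \cref{lem fact thru zp xy=0 implies V(x_y)=0} kills $V$ on each of these four pairs, and bilinearity finishes the job:
\begin{align*}
V(x,y) &= V(x'+x'',y'+y'') \\
&= V(x',y')+V(x',y'')+V(x'',y')+V(x'',y'') = 0.
\end{align*}

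There is no real obstacle here, since the content of the lemma is almost entirely bookkeeping on top of \cref{lem gener fact direct sums}; the only step that requires any attention is making sure that in part \cref{pp at c in A+B has pp at zero in A+B 2} one treats both alternatives of \cref{lem gener fact direct sums}\cref{lem gener fact direct sums 2} and correctly expands $V(x,y)$ by bilinearity into four terms in the second alternative.
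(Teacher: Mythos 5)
Your proposal is correct and follows essentially the same route as the paper's own proof: part (i) via \cref{lem gener fact direct sums}\cref{lem gener fact direct sums 1} and \cref{lem gen zp factoriz implies pp at zero}, and part (ii) by treating both alternatives of \cref{lem gener fact direct sums}\cref{lem gener fact direct sums 2}, using \cref{lem fact thru zp xy=0 implies V(x_y)=0} and bilinearity in the second alternative. No gaps.
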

\begin{proof}
\cref{pp at c in A+B has pp at zero in A+B 1} Fix $i$ and take $x,y\in S_i$ such that $xy=0.$ By \cref{lem gener fact direct sums} \cref{lem gener fact direct sums 1} the element $c$ has generalized factorization through $(x,y)$ and hence by \cref{lem gen zp factoriz implies pp at zero} we have $V(x,y)=0$.

\cref{pp at c in A+B has pp at zero in A+B 2} We apply \cref{lem gener fact direct sums}\cref{lem gener fact direct sums 2}. If $c$ has generalized factorization through $(x,y)$ then $V(x,y)=0$ by  \cref{lem gen zp factoriz implies pp at zero}. Otherwise, there exist $x',x'',y',y''$ such that $x=x'+x'',y=y'+y''$ and $V(x',y')=V(x',y'')= V(x'',y')=V(x'',y'')=0$ by \cref{lem fact thru zp xy=0 implies V(x_y)=0}, whence $V(x,y)=0$ by bilinearity.
 \end{proof}

\begin{lem}\label{lem sums of B(H) are zpd}
    Let $H_1,\dots,H_n$ be complex Hilbert spaces and $A=B(H_1)\oplus \dots \oplus B(H_n)$. Then $A$ is zero product determined (as an algebra).
\end{lem}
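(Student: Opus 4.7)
The plan is to reduce to the well-known fact that each individual factor $B(H_i)$ is a zpd algebra and then recombine the pieces. Since $A=B(H_1)\oplus\cdots\oplus B(H_n)$ is unital with identity $1=1_1+\cdots+1_n$ (where $1_i$ is the unit of $B(H_i)$), by the remark after \cref{V(a_b)=V(1_ab)=V(ab_1)} it suffices, for any bilinear $V:A\times A\to X$ having product property at zero, to produce a linear $\varphi:A\to X$ with $V(a,b)=\varphi(ab)$.

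First I would record that each $B(H_i)$ is zpd \emph{as an algebra}. By \cite[Theorem 2.3]{Bresar-zpd} it suffices that $B(H_i)$ be generated (equivalently, linearly spanned) by idempotents. For finite-dimensional $H_i$, $B(H_i)=M_{\dim H_i}(\CC)$, and every matrix unit $e_{ij}$ is a difference of the idempotents $e_{ii}$ and $e_{ii}+e_{ij}$; for infinite-dimensional $H_i$, the classical Pearcy--Topping/Fillmore result that every operator in $B(H_i)$ is a finite linear combination of projections does the job.

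Next I would perform the block decomposition. Write $a=\sum_{i=1}^n a_i$ and $b=\sum_{j=1}^n b_j$ with $a_i:=1_i a 1_i\in B(H_i)$ and similarly for $b_j$. By bilinearity,
\begin{align*}
V(a,b)=\sum_{i,j=1}^n V(a_i,b_j).
\end{align*}
For $i\neq j$ we have $a_i b_j=0$ (the two operators live in orthogonal direct summands), so the product-property-at-zero hypothesis forces $V(a_i,b_j)=0$. Hence $V(a,b)=\sum_{i=1}^n V(a_i,b_i)$. Now define $V_i:B(H_i)\times B(H_i)\to X$ by $V_i(x,y):=V(x,y)$, where $x,y$ are viewed as elements of $A$ supported in the $i$-th summand. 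Each $V_i$ is bilinear, and if $xy=0$ in $B(H_i)$ then $xy=0$ in $A$, so $V_i(x,y)=0$. Since $B(H_i)$ is zpd as an algebra, there exists a linear map $\varphi_i:B(H_i)\to X$ with $V_i(x,y)=\varphi_i(xy)$ for all $x,y\in B(H_i)$.

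Finally, setting $\varphi:A\to X$ by $\varphi(c):=\sum_{i=1}^n\varphi_i(\pi_i(c))$, where $\pi_i:A\to B(H_i)$ is the canonical projection, one gets
\begin{align*}
V(a,b)=\sum_{i=1}^n V_i(a_i,b_i)=\sum_{i=1}^n\varphi_i(a_ib_i)=\varphi(ab),
\end{align*}
so $V$ has product property and $A$ is zpd. The only non-routine ingredient is the statement that $B(H)$ is linearly spanned by its idempotents in the infinite-dimensional case; apart from citing that standard fact, the argument is a straightforward component-wise reduction.
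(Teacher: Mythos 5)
Your argument is correct. It rests on the same two ingredients as the paper's proof --- Bre\v{s}ar's Theorem 2.3 (a unital algebra generated by idempotents is zpd) and the fact that $B(H)$ is linearly spanned by idempotents (spectral theory plus $a=a_1-ia_2$ in finite dimensions, Pearcy--Topping in infinite dimensions) --- but you route through them differently: you apply the theorem to each factor $B(H_i)$ separately and then recombine by hand, writing $V(a,b)=\sum_{i,j}V(a_i,b_j)$, killing the off-diagonal terms via $a_ib_j=0$ and the product property at zero, and gluing the resulting linear maps $\varphi_i$ into $\varphi=\sum_i\varphi_i\circ\pi_i$. The paper instead applies Bre\v{s}ar's theorem a single time to $A$ itself, since a finite direct sum of algebras each spanned by idempotents is again spanned by idempotents, so no recombination step is needed. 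Your component-wise reduction is sound (it amounts to re-proving that a finite direct sum of zpd algebras is zpd), but it is an avoidable detour given that the hypothesis of the cited theorem already holds for $A$ globally; the only substantive content in either version is the idempotent-spanning fact, which you identify correctly.
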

\begin{proof}
    In view of \cite[Theorem 2.3]{Bresar-zpd}, it is enough to prove that $A$ is generated by idempotents. In fact, each $B(H_k)$ is spanned by projections. Indeed, if $\dim(H_k)<\infty$ and $a\in B(H_k)$, then write $a=a_1-i a_2$, where $a_1,a_2$ are self-adjoint ($a_1=\frac 12(a+a^*)$ and $a_2=\frac i2(a-a^*)$). Now, by spectral theory, every self-adjoint element in $B(H_k)$ is a linear combination of projections. Hence, so is $a$. If $\dim(H_k)=\infty$, then use \cite[Corollary 2.3]{Pearcy-Topping-SmallSums}. 
\end{proof}

 \begin{defn}
     Let $A=\bigoplus_{\lb} B(H_\lb)$ be an atomic von Neumann algebra and $c\in A$. We say that $c$ satisfies the \textit{rank hypothesis}  if 
     \begin{enumerate}[label=(vNRH)]
         \item\label{VRH} $\rank(\pi_\lb(c) )\le\dim(H_\lb)-2$ for all $\lb$.
     \end{enumerate}
 \end{defn}

\begin{prop}\label{thm finite sums of B(H) determined at c}
Let $H_1,\dots,H_n$ be complex Hilbert spaces and $A=B(H_1)\oplus \dots \oplus B(H_n)$. Let $c=(c_1,\dots,c_n)\neq 0$ be an element satisfying $\cref{VRH}$ and such that each $\pi_{\lb}(c)$ is a finite-rank operator. Then
 $A$ is determined by products at $c$ (as an algebra). 
\end{prop}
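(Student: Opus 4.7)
Since $A$ is a zero-product-determined algebra by Lemma~\ref{lem sums of B(H) are zpd}, it is enough to show that every bilinear $V$ with product property at $c$ satisfies $V(x,y)=0$ whenever $xy=0$. The plan is to apply Lemma~\ref{V pp at c in A+B has pp at zero in A+B} with subsets $S_i\subseteq B(H_i)$ chosen according to dimension: $S_i=B(H_i)$ when $\dim(H_i)=\infty$, and $S_i=MinPI(B(H_i))$ when $\dim(H_i)<\infty$. In the first case, generalized factorization of the finite-rank $c_i$ through zero products in $S_i$ is supplied by Lemma~\ref{lem technical Julius}. In the second, one verifies it directly via the trivial split $u=0+u$, constructing the two plain factorizations of $c_i$ through $(0,v)$ and $(u,v)$ (for $u,v\in MinPI(B(H_i))$ with $uv=0$) using the SVD-based construction of Case~1 of the proof of Lemma~\ref{lem compact star factor thu zp}; here \cref{VRH} supplies the codimension needed to pick orthonormal vectors inside $\{e_2,f_1\}^\perp$.

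Next, writing $x=\sum_i x_i$ and $y=\sum_j y_j$ with $x_i,y_i\in B(H_i)$, one has $x_iy_i=0$ for each $i$ (from $xy=0$) and $x_iy_j=0$ automatically for $i\ne j$, so $V(x,y)=\sum_{i,j}V(x_i,y_j)$ by bilinearity. For a diagonal term $V(x_i,y_i)$: Lemma~\ref{V pp at c in A+B has pp at zero in A+B}(i) applies directly when $\dim(H_i)=\infty$; when $\dim(H_i)<\infty$, SVD gives finite decompositions $x_i=\sum_l\mu_l u_l$ and $y_i=\sum_{l'}\nu_{l'}v_{l'}$ with $u_l,v_{l'}\in MinPI(B(H_i))$ and all cross products $u_lv_{l'}=0$ (as in Case~1 of the proof of Lemma~\ref{lem S zp-dense in compact C*}), reducing to (i) termwise.

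For an off-diagonal $V(x_i,y_j)$ with $i\ne j$: when both $H_i,H_j$ are finite-dimensional, SVD reduces the problem to minimal partial isometry pairs $(u_l,v_{l'})$ with $u_lv_{l'}=0$, and Lemma~\ref{V pp at c in A+B has pp at zero in A+B}(ii) applies whenever $\dim(H_i),\dim(H_j)\ge 2$ (the case of a one-dimensional summand being handled by direct factorization through $(1_i,y_j)$). When $\dim(H_i)=\infty$ and $x_i$ is injective, so that $Ann_r(x_i)\cap S_i=\{0\}$ and (ii) cannot be applied directly, I would further decompose $x_i=px_i+(1-p)x_i$ and $y_j=y_jq+y_j(1-q)$ using projections $p\in B(H_i)$ and $q\in B(H_j)$ with $p,1-p,q,1-q$ all of infinite rank. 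Each of the four pieces then has a non-trivial right, resp.\ left, annihilator in $B(H_i)$, resp.\ $B(H_j)$, so Lemma~\ref{V pp at c in A+B has pp at zero in A+B}(ii) applies termwise and bilinearity gives $V(x_i,y_j)=0$. The main obstacle I foresee is precisely this off-diagonal decomposition: one must ensure the annihilator conditions hold simultaneously for all four pieces, which relies on the infinite rank and corank of the chosen projections. Once this is done, $V$ has product property at zero on $A$, and the proof concludes by Lemma~\ref{lem sums of B(H) are zpd}.
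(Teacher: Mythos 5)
Your overall architecture coincides with the paper's: reduce to product property at zero via \cref{lem sums of B(H) are zpd}, then feed suitable sets $S_i$ into \cref{V pp at c in A+B has pp at zero in A+B} (diagonal blocks by part \cref{pp at c in A+B has pp at zero in A+B 1}, off-diagonal blocks by part \cref{pp at c in A+B has pp at zero in A+B 2}), with generalized factorization supplied by \cref{lem technical Julius} on infinite-dimensional summands and by \cref{lem compact star factor thu zp} (plus the trivial split $u=0+u$) on finite-dimensional ones. The one place you diverge is also the one genuine gap: the off-diagonal case with $\dim(H_i)=\infty$. Taking $S_i=B(H_i)$ and splitting $x_i=px_i+(1-p)x_i$ with $p$ of infinite rank and corank does \emph{not} by itself restore the annihilator condition. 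For injective $x_i$ one has $Ann_r(px_i)\neq\{0\}$ iff $\ker(p)\cap\im(x_i)\neq\{0\}$, and a dense, non-closed operator range can meet a given infinite-dimensional closed subspace trivially (Fillmore--Williams), so ``infinite rank and corank'' is not the right reason. The step is salvageable, but only by choosing $p$ \emph{adapted to} $x_i$ --- e.g.\ pick orthonormal $v,w\in\im(x_i)$ and a projection $p$ with $pv=v$, $pw=0$ and infinite rank and corank --- and dually choosing $q$ adapted to $y_j$ through $\im(y_j^*)$, since $Ann_l(z)\neq\{0\}$ iff $\overline{\im(z)}\neq H_j$. As written, the justification is insufficient.

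The paper avoids this issue entirely: in the off-diagonal cases it takes $S_i=P(B(H_i))\setminus\{1_{A_i}\}$, so that every element of $S_i$ is annihilated on both sides by $1-p\in S_i$ and the hypothesis of \cref{V pp at c in A+B has pp at zero in A+B}\cref{pp at c in A+B has pp at zero in A+B 2} holds automatically; $\Span(S_i)=B(H_i)$ by \cite[Corollary 2.3]{Pearcy-Topping-SmallSums} then lets bilinearity finish, while \cref{lem technical Julius} still provides generalized factorization of $c_i$ through zero products of projections because it works for all of $B(H_i)$. I would adopt that device in place of your projection split. The rest of your argument (diagonal blocks, SVD reduction on finite-dimensional summands, the reduction $V(x,y)=\sum_{i,j}V(x_i,y_j)$) matches the paper and is sound; note only that under \cref{VRH} no summand can be one-dimensional, so your parenthetical about $\dim(H_i)=1$ is vacuous.
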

\begin{proof}
Let $X$ be a vector space and  $V:A\times A\to X$ a  bilinear map  that has product property at $c$. Since $A$ is zero product determined by \cref{lem sums of B(H) are zpd}, it is enough to show that $V$ has product property at zero.

Let us first suppose that $\dim(H_i)<\infty$ for all $i$. In this case $A$ is a compact C$^*$-algebra, and the result is a particular case of \cref{thm main compact c*}. 


Otherwise, without loss of generality assume that $\dim(H_i)=\infty$, $1\le i\le s\le n$, and $\dim(H_i)<\infty$, $s+1\le i\le n$. Let $A_{fin}=B(H_{s+1})\oplus \dots \oplus B(H_n)$ and write $c_{fin}=c_{s+1}+\dots+ c_n$. Recall that by the assumptions on $c$ and \cref{lem compact star factor thu zp} if $c_{fin}\neq 0$ then $c_{fin}$ factorizes through zero products in $S=MinPI(A_{fin})$, since $A_{fin}$ is a compact C$^*$-algebra.

Take $x,y\in A$ such that $xy=0$. By bilinearity it is enough to show that $V(x,y)=0$ whenever $x\in B(H_i)\cup A_{fin}$ and $y\in B(H_j)\cup A_{fin}$ for some $1\leq i,j\leq s.$ We shall split the proof in several cases.

\textit{Case 1}. $x,y\in B(H_i)$ for some $i\leq s$. Recall from \cref{lem technical Julius} that  each $c_i\neq 0$ has generalized factorization through zero products in $S_i:=B(H_i)$. Moreover, if $c_{fin}\neq 0$ then $c_{fin}$ factorizes through zero products in $S.$ Thus, by 
\cref{V pp at c in A+B has pp at zero in A+B}\cref{pp at c in A+B has pp at zero in A+B 1} applied to $A_1\oplus \dots A_s \oplus A_{fin}$ and $c=c_1+\dots +c_s+c_{fin}$ we have $V(x,y)=0.$

\textit{Case 2}. $x,y\in A_{fin}$. The same argument given in Case 1 allows to show that $V(u,v)=0$ whenever $u,v\in S$ with $uv=0.$ Since $S$ is zp-dense in $A_{fin}$ by \cref{lem S zp-dense in compact C*} we have $V(x,y)=0.$

\textit{Case 3}. $x\in B(H_i)$ and $y\in A_{fin}$. Let $S_i:=P(B(H_i))\setminus \{1_{A_i}\}$ and $S:=MinPI(A_{fin})$. If $x\in S_i$ and $y\in S$, then apply  \cref{lem compact star factor thu zp,lem technical Julius} and \cref{V pp at c in A+B has pp at zero in A+B}\cref{pp at c in A+B has pp at zero in A+B 2} to show that 
$V(x,y)=0$. Since $\Span(S_i)=B(H_i)$ and $\Span(S)=A_{fin}$, we have $V(x,y)=0$ for all $x\in B(H_i),y\in A_{fin}$ by bilinearity.

\textit{Case 4}. $x\in A_{fin}$ and $y\in B(H_j)$. Analogous to Case 3.

\textit{Case 5}. $x\in B(H_i)$ and $y\in B(H_j).$ We apply \cref{lem technical Julius} and \cref{lem gener fact direct sums}\cref{lem gener fact direct sums 2} with $S_i=P(B(H_i))\setminus \{1_{A_i}\}$ and $S_j=P(B(H_j))\setminus \{1_{A_j}\}$ to show that 
$V(x,y)=0$ whenever $x\in S_i,y\in S_j$. Finally, since $\Span(S_i)=B(H_i)$ and $\Span(S_j)=B(H_j)$ we have $V(x,y)=0$ for all $x\in B(H_i),y\in B(H_j).$ 
\end{proof}

\begin{defn}
 Let $A$ be a C$^*$-algebra. We say that $c\in A$ has \textit{rank} $k\in \mathbb{N}\cup \{\infty\}$ and write $\rank(c)=k$, if either $c=0$ (whenever $k=0$) or $c=\sum_{i=1}^k \af_iu_i$, where $\{\af_i\}_{i=1}^k\subseteq\CC\setminus\{0\}$ and $\{u_i\}_{i=1}^k$ are mutually orthogonal non-zero minimal partial isometries (whenever $k>0$). If $\rank(c)<\infty$, then $c$ is said to be a \textit{finite-rank} element.
\end{defn}
If $A$ is a compact C$^*$-algebra  then the set of finite-rank elements coincides with $soc(A)$.

\begin{rem}\label{rem triple ideals}
    Let $A$ be a C$^*$-algebra. A norm closed subspace $I$ of $A$ is said to be a \textit{triple ideal} (respectively, \textit{Jordan ideal}) of $A$ if $\{A,A,I\}+\{A,I,A\}\subseteq I$ (respectively, $A\circ I\subseteq I$). By the comments in \cite[page 8]{Gar_Per_One_Par_JB} triple ideals in a C$^*$-algebra coincide with Jordan ideals. On the other hand, by \cite[Theorem 5.3]{Civin1965LieAJ} Jordan ideals coincide with (two-sided) ideals. Thus, in a C$^*$-algebra two-sided ideals coincide with triple ideals.
\end{rem}

 A C$^*$-algebra $A$ is said to be a \textit{von Neumann algebra} if it is dual to a Banach space (called \textit{predual} of $A$). It is known that each von Neumann algebra has a unique (up to isometry) predual \cite[Corollary 3.9]{Takesaki}. Let $A$ be a von Neumann algebra with non-zero minimal partial isometries. Then there exist two weak$^*$-closed ideals $A_{at}$ (the \textit{atomic part} of $A$) and $A_{d}$ (the \textit{diffuse} part of $A$) such that $MinPI(A)\sst A_{at}$, $A_{at}=\overline{\Span(MinPI(A))}^{w^*}$ and $A=A_{at}\oplus A_d$ (see \cite[Theorem 2]{FriedRusso_Predual}, \cref{rem triple ideals} and take into account that every von Neumann algebra is a JBW$^*$-triple). It is also known that $A_{at}\cong \bigoplus_{\lambda} B(H_{\lambda}) $ where each $H_{\lambda}$ is a complex Hilbert space (see \cite[Proposition 2]{FriedRusso_GelfNaim} and take into account that the only Cartan factors which are C$^*$-algebras are those of the form $B(H)$). The product in a von Neumann algebra is separately weak$^*$-continuous (see \cite[Theorem 1.7.8]{Sakai_Book}).


Whenever $A$ is a von Neumann algebra, by applying a $^*$-isomorphism, if necessary, we may always assume that $A_{at}=\bigoplus_{\lambda} B(H_{\lambda} )$. We also denote by $\pi_{\lambda}$ the projection of $A_{at}$ onto $B(H_{\lambda} ).$

 \begin{prop}\label{thm von neumann finite rank}
 Let $A$ be a von Neumann algebra with $A_{at}\ne\{0\}$ and $c\in A_{at}\setminus \{0\}$ a finite-rank element satisfying \cref{VRH}. If a bilinear map $V:A\times A\to X$ with values in a complex vector space $X$ has product property at $c$, then $V$ has product property at zero.
  \end{prop}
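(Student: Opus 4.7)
The plan is to parallel the proof of \cref{thm finite sums of B(H) determined at c}, using that the finite rank of $c$ confines its support in $A_{at}=\bigoplus_\lambda B(H_\lambda)$ to finitely many summands. Let $\{\lambda_1,\dots,\lambda_n\}$ be the finite set of indices with $c_i:=\pi_{\lambda_i}(c)\ne 0$, set $A_i:=B(H_{\lambda_i})$, and let
\begin{align*}
A_0:=A_d\oplus \bigoplus_{\lambda\notin\{\lambda_1,\dots,\lambda_n\}} B(H_\lambda)
\end{align*}
be the complementary weak$^*$-closed ideal, so that $A=A_0\oplus A_1\oplus\cdots\oplus A_n$ is an orthogonal direct sum of unital ideals with $c=c_1+\cdots+c_n$ and vanishing $A_0$-component. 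For each $i\geq 1$, \cref{VRH} combined with \cref{lem compact star factor thu zp} (when $\dim H_{\lambda_i}<\infty$) or \cref{lem technical Julius} (when $\dim H_{\lambda_i}=\infty$) shows that $c_i$ admits (generalized) factorization through zero products in $S_i=MinPI(A_i)$, respectively $S_i=A_i$. Since the hypothesis on the vanishing $A_0$-component in \cref{lem gener fact direct sums,V pp at c in A+B has pp at zero in A+B} is vacuous, any choice of $S_0\subseteq A_0$ is permissible.

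To show $V$ has product property at zero, I take $x,y\in A$ with $xy=0$ and split $x=x_0+x_1+\cdots+x_n$, $y=y_0+y_1+\cdots+y_n$ along the direct sum. Orthogonality forces $x_0y_0=x_iy_i=0$ for each $i$, while all cross products $x_Iy_J$ with $I\neq J$ vanish automatically. By bilinearity $V(x,y)=\sum_{I,J}V(x_I,y_J)$, so it suffices to show each summand is zero. For $(0,0)$: choosing $a:=1_A-1_{A_0}$ and $b:=c$ yields a direct factorization of $c$ through $(x_0,y_0)$ (one checks $ay_0=0$, $x_0b=x_0c=0$ since $c\in A_1\oplus\cdots\oplus A_n$, and $ab=c$), so $V(x_0,y_0)=0$ by \cref{lem fact thru zp xy=0 implies V(x_y)=0}. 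For $(i,i)$ with $\dim H_{\lambda_i}=\infty$, I apply \cref{V pp at c in A+B has pp at zero in A+B}\cref{pp at c in A+B has pp at zero in A+B 1} with $S_i=A_i$; for $(i,i)$ with $\dim H_{\lambda_i}<\infty$, I apply it with $S_i=MinPI(A_i)$ and then extend from minimal partial isometries to arbitrary $x_i,y_i\in A_i$ via the (finite-sum) zp-density from \cref{lem S zp-dense in compact C*}.

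For cross terms $V(x_I,y_J)$ with $I\neq J$, I invoke \cref{V pp at c in A+B has pp at zero in A+B}\cref{pp at c in A+B has pp at zero in A+B 2} after replacing the $S$-sets with $S_0:=P(A_0)\setminus\{1_{A_0}\}$ and, for each $i\geq 1$, $S_i:=P(A_i)\setminus\{1_{A_i}\}$ when $\dim H_{\lambda_i}=\infty$ or $S_i:=MinPI(A_i)$ when $\dim H_{\lambda_i}<\infty$. The required annihilator conditions are satisfied because for any non-trivial projection $p$ the complement $1-p$ lies in the same set with $p(1-p)=(1-p)p=0$, and every minimal partial isometry admits an orthogonal partner inside $MinPI$ by the proof of \cref{lem S zp-dense in compact C*}. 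The lemma then yields $V(u,v)=0$ for $u\in S_I$, $v\in S_J$, and bilinearity, together with the fact that each $A_I$ is algebraically spanned by its non-trivial projections (by the spectral theorem in the finite-dimensional case and by the Pearcy--Topping theorem or its extensions to general von Neumann algebras in the infinite-dimensional case), propagates this to all $x_I\in A_I$, $y_J\in A_J$. The most delicate ingredient is precisely this algebraic spanning by projections for the diffuse summand $A_d\subseteq A_0$, where the spectral theorem alone provides only norm approximation; one must appeal to a finite-combination result of Pearcy--Topping / Goldstein--Paszkiewicz type, or, alternatively, bypass spanning entirely via the pairing trick that replaces $(\tilde a,\tilde b)$ with $\tilde a\tilde b=c$ by $(\tilde a+x_0,\tilde b)$ (valid since $x_0\tilde b=0$), from which the product property at $c$ forces $V(x_0,\tilde b)=0$ and, upon varying $\tilde b$, yields $V(x_0,y_J)=0$ directly.
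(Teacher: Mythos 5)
Your overall strategy is sound and largely parallels the paper's: you decompose $A=A_0\oplus A_1\oplus\cdots\oplus A_n$ according to the finite support of $c$ (the paper instead splits $A=A_1\oplus A_2$ with $A_1$ the finitely many $B(H_{\lambda_i})$ carrying $c$ and $A_2=A_1^\perp$, and quotes \cref{thm finite sums of B(H) determined at c} for the $A_1\times A_1$ block), and your treatment of the diagonal blocks and of the $(0,0)$ block via the direct factorization $a=1_A-1_{A_0}$, $b=c$ is correct. The genuine gap is in the cross terms between $A_0$ and an $A_j$ with $c_j\neq 0$. Your main argument restricts both slots to $S_0=P(A_0)\setminus\{1_{A_0}\}$ and $S_j$ and then needs $\Span(S_0)=A_0$ algebraically. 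But $A_0$ contains the diffuse part $A_d$, and a diffuse von Neumann algebra with type $I_{fin}$ summands (e.g.\ $A_d=L^\infty[0,1]$) is \emph{not} the finite linear span of its projections -- only the norm closure is; the paper's own \cref{rem with no type I finite summands} records that spanning by projections requires the absence of type $I_{fin}$ summands. For the same reason your first proposed repair (Pearcy--Topping/Goldstein--Paszkiewicz) does not apply to $A_0$.

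Your second repair, the pairing trick, is the right idea but is not yet a proof: replacing $(\tilde a,\tilde b)$ by $(\tilde a+x_0,\tilde b)$ gives $V(x_0,\tilde b)=0$ only for $\tilde b$ that occur as right factors of $c$ and annihilate $x_0$, and "varying $\tilde b$" does not obviously reach all of $A_j$ (for instance, a rank-one $y_j$ is never the $A_j$-component of a right factor of $c$ once $\rank(c_j)\geq 2$); to close it one must observe that invertible elements of $A_j$ do arise as such components and span $A_j$, and then cancel the constant contribution of the remaining components of $\tilde b$. The paper avoids all of this by an asymmetric device: in its Case 3 the variable in $A_2\supseteq A_d$ is kept completely arbitrary, because the (generalized) factorizations of $c$ through $(\pi_{\lambda_i}(x),y)$ are built from factorizations of $c_i$ inside $B(H_{\lambda_i})$ whose left factors lie in $A_1$ and hence annihilate every $y\in A_2$ automatically; spanning is then needed only for the sets $S_i\sst B(H_{\lambda_i})$, where it genuinely holds. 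You should restructure your cross-term argument along these lines: restrict only the slot living in a summand where $c$ is non-zero, and leave the $A_0$-slot arbitrary.
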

  \begin{proof}
  
  Since $\rank(c)<\infty$ and every  $u\in MinPI(A)$ lies in a unique $B(H_{\lambda})$, there exists a finite set  $I=\{\lambda_1,\ldots,\lambda_s \}$ such that $\pi_{\lambda}(c)\neq 0$ whenever $\lambda \in I$ and $\pi_{\lambda}(c)=0$ otherwise. Set $A_1=B(H_{\lambda_1})\oplus \ldots\oplus B(H_{\lambda_s})$. Then $A_1$ is a weak$^*$-closed ideal of $A$ and as a consequence of Proposition 1.10.5 in \cite{Sakai_Book} it follows that $A=A_1\oplus A_2$ where $A_2=A_1^{\perp}.$
  
  Let us take $x,y\in A$ such that $xy=0$.
  
 \textit{Case 1}. $x,y\in A_1$. By \cref{thm finite sums of B(H) determined at c} there exists a linear map $\varphi:A_1\to X$ such that $V(a,b)=\varphi(ab)$ for all $a,b\in A_1$. Thus, $V(x,y)=\vf(xy)=0$.
 
 \textit{Case 2}. $x,y\in A_2$. Then $c$ factorizes through $(x,y)$. Indeed, we have $1_{A_1}y=xc=0$ and $1_{A_1}c=c$. Hence $V(x,y)=0$ by \cref{lem fact thru zp xy=0 implies V(x_y)=0}. 
 

 \textit{Case 3}. $x\in A_1,y\in A_2$.
  Define 
  \begin{align*}
      S_i=
      \begin{cases}
        P(B(H_{\lambda_i}))\setminus \{1_{B(H_{\lambda_i})}\}, & \dim(H_{\lambda_i})=\infty,\\
        MinPI(B(H_{\lambda_i})), & \text{otherwise}.
      \end{cases}
  \end{align*}
  Let us suppose that $\pi_{\lambda_i}(x)\in S_i$ for all $i$. By  \cref{lem compact star factor thu zp} (if $\dim(H_{\lambda_i})<\infty$) or \cref{lem technical Julius} (if $\dim(H_{\lambda_i})=\infty$) we can assure that, for each $i$, $\pi_{\lambda_i}(c)$ has generalized factorization through zero products in $S_i$.
  
 Now observe that, for each $i$, there exists $z_i  \in S_i,z_i\neq 0$ such that $\pi_{\lambda_i}(x) z_i=0$. Indeed, if $\dim(H_{\lambda_i})<\infty$, this is contained in the first paragraph of the proof of \cref{lem S zp-dense in compact C*}. Otherwise, if $p\in S_i$, we can take $z_i=1_{B(H_{\lambda_i})}-p$. 

 Let us fix $i$ and $z_i$. We use generalized factorization through $(\pi_{\lambda_i}(x),z_i)$ to split the proof into two sub-cases. We shall  use the following short notations: $1_{\neq \lambda_i}:=1_{A_1}-1_{B(H_{\lambda_i})}$ and   $c_{\neq {\lambda_i}}:=c-\pi_{\lambda_i}(c)$.
  
  \textit{Case 3.1. } There exist $z_i',z_i'',a_1,a_2,b_1,b_2\in B(H_{\lambda_i})$ such that $z_i=z_i'+z_i''$, $a_1 z_i '=a_2 z_i''=\pi_{\lambda_i}(x)b_1=\pi_{\lambda_i}(x)b_2=0$ and $a_1b_1=a_2 b_2=\pi_{\lambda_i}(c)$.  

  Set $a=a_1+1_{\neq \lambda_i}$ and $b=b_1+c_{\neq \lambda_i}$.  We have
   \begin{align*}
       ay=\pi_{\lambda_i}(x) b=0  \mbox{ and }  ab=c,
   \end{align*}
 whence $c$ factorizes through $(\pi_{\lambda_i}(x),y)$, and consequently $V(\pi_{\lambda_i}(x),y)=0$ by \cref{lem fact thru zp xy=0 implies V(x_y)=0}.

 \textit{Case 3.2. } 
 There exist $x_i',x_i'',a_1,a_2,b_1,b_2\in B(H_{\lambda_i})$ such that $x_i'+x_i''=\pi_{\lambda_i}(x) ,$ $a_1z_i=a_2 z_i=x_i' b_1=x_i'' b_2=0$ and $a_1b_1=a_2 b_2=\pi_{\lambda_i}(c)$.  

  Set $a'=a_1+1_{\neq \lambda_i},a''=a_2+1_{\neq \lambda_i},b'=b_1+c_{\neq \lambda_i}$ and $b''=b_2+c_{\neq \lambda_i}.$ We have
   \begin{align*}
       a'y=a''y=x_i' b'=x_i'' b''=0  \mbox{ and }  a'b'=a''b''=c,
   \end{align*}
 thus $c$ has a generalized factorization through $(\pi_{\lambda_i}(x),y) $ and hence $V(\pi_{\lambda_i}(x),y)=0$ by \cref{lem gen zp factoriz implies pp at zero}.

 Thus, for all $i$, we have proved $V(\pi_{\lambda_i}(x),y)=0$ under the hypothesis that $\pi_{\lambda_i}(x)\in S_i$.
Now, $\Span(S_i)=B(H_{\lambda_i})$ for all $i$ by \cite[Corollary 2.3]{Pearcy-Topping-SmallSums} (if $\dim(H_{\lambda_i})=\infty$) and \cref{lem partial isometries}\cref{a=sum_i=1^k-lb_iu_i} (if $\dim(H_{\lambda_i})<\infty$). Therefore, by linearity, $V(x,y)=0$ for all $x\in B(H_{\lambda_i}),y\in A_2.$ Applying the linearity once again, we have $V(x,y)=\sum_{i=1}^s V(\pi_{\lambda_i}(x),y)=0$ whenever $x\in A_1,y\in A_2$.
  
   \textit{Case 4.} $x\in A_2,y\in A_1$. Similar to Case 3.
  \end{proof}
  
  As a consequence of our \cref{thm von neumann finite rank} together with Theorem 5.19 and Proposition 4.9 from \cite{Bresar-zpd}, we have the following.

\begin{thrm}\label{cor von neumann finit bounded}
 Let $A$ be a von Neumann algebra with $A_{at}\ne\{0\}$, $X$ a normed space and let $c\in A_{at}$ be a finite-rank element satisfying \cref{VRH}. Then $A$ is determined by products at $c$ (as a Banach algebra).
  \end{thrm}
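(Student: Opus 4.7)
The plan is to mimic the closing argument of \cref{thm main compact c*}, reducing this theorem to a clean combination of the already-established \cref{thm von neumann finite rank} with the general zpd Banach algebra machinery from \cite{Bresar-zpd}.

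Fix a bounded bilinear map $V\colon A\times A\to X$ with product property at $c$; we must produce a bounded linear $\varphi\colon A\to X$ with $V(a,b)=\varphi(ab)$ for all $a,b\in A$. If $c=0$, then by the very definition of product property at $c$ the map $V$ already preserves zero products. If $c\ne 0$, then $c$ is a non-zero finite-rank element of $A_{at}$ satisfying \cref{VRH}, so \cref{thm von neumann finite rank} (applied to $V$ regarded purely as a bilinear map into the complex vector space $X$, without using boundedness) yields that $V$ has product property at zero, i.e.\ $V(a,b)=0$ whenever $ab=0$.

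In either case we are reduced to the following situation: $V$ is a bounded bilinear map from the C$^*$-algebra $A$ into a normed (hence, after completion, Banach) space $X$, and $V$ preserves zero products. Since every C$^*$-algebra is a zpd Banach algebra by \cite[Section 5.3]{Bresar-zpd}, the combination of \cite[Proposition 4.9]{Bresar-zpd} with \cite[Theorem 5.19]{Bresar-zpd} provides a bounded linear map $\varphi\colon A\to X$ (valued in the completion of $X$, but automatically in $X$ itself since $\varphi$ is determined by $V$ on products) such that $V(a,b)=\varphi(ab)$ for all $a,b\in A$. This is exactly the statement that $A$ is determined by products at $c$ as a Banach algebra.

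No substantial new obstacle appears at this stage: the full technical burden has been discharged inside \cref{thm von neumann finite rank} (which, via \cref{thm finite sums of B(H) determined at c,V pp at c in A+B has pp at zero in A+B,lem gener fact direct sums}, reduces product property at $c$ to product property at zero on each atomic block and handles the mixed cases against the diffuse part), and the present statement is obtained by feeding that output into the standard zpd Banach algebra framework.
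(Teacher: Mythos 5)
Your proposal is correct and follows exactly the paper's route: the paper derives this theorem in one line as a consequence of \cref{thm von neumann finite rank} combined with Theorem 5.19 and Proposition 4.9 of \cite{Bresar-zpd}, which is precisely your reduction. Your explicit treatment of the case $c=0$ (which \cref{thm von neumann finite rank} formally excludes) and the remark that $\varphi$ lands in $X$ itself via Cohen factorization are small touches the paper leaves implicit, but they do not change the argument.
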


A unital C$^*$-algebra is said to have \textit{real rank zero} if the set of self-adjoint invertible elements is dense in $A_{sa}$, where $A_{sa}=\{a\in A:a^*=a\}$. By \cite[Theorem 2.6]{Brown-Pederse-RRZero} a C$^*$-algebra $A$ has real rank zero if and only if the elements in $A_{sa}$ with finite spectra are dense in $A_{sa}.$ By spectral theory, if $a\in A_{sa}$ has finite spectra, then there exist mutually orthogonal projections $p_1,\dots,p_n\in A$ and $\lambda_1,\dots,\lambda_n\in \mathbb{R}$ such that $a=\sum_i \lambda_i p_i.$ Every von Neumann algebra has real rank zero (see  \cite[Proposition 1.3]{Brown-Pederse-RRZero}).

\begin{defn}
    Let $(X,\tau),(Y,\sigma)$ be topological spaces and $V:X\times X \to Y$. We say that $V$ is \textit{separately $\tau$-to-$\sigma$ continuous} if for every $x\in X$ the maps $V(-,x),V(x,-):X\to Y$ are continuous.

    In case $(Y,\sigma)=(X,\tau)$ we shall say that $V$ is \textit{separately $\tau$-continuous}.
\end{defn}

 The next result is probably well known. We include here a short proof inspired by Lemma 2.1 from \cite{Burgos-Sanchez-ZP-2013}.

 \begin{lem}\label{lem bilinear weak map zp}
 Let $A$ be a von Neumann algebra and $(X,\tau)$ a topological vector space. Let $V:A\times A \to X$ be a separately weak$^*$-to-$\tau$ continuous bilinear map that has product property at zero. Then
 $$ V(ab,c)=V(a,bc)$$ for all $a,b,c\in A.$
 \end{lem}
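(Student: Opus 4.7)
The plan is to reduce the identity $V(ab,c)=V(a,bc)$ to the case where the middle factor is a projection, apply the product property at zero there, and then bootstrap via spectral theory. First I would observe that for any projection $p\in A$ and arbitrary $a,c\in A$, the equalities $(a-ap)(pc)=apc-ap^{2}c=0$ and $(ap)(c-pc)=apc-ap^{2}c=0$ hold because $p^{2}=p$. Applying the product property at zero to each of these pairs gives
\begin{align*}
V(a,pc)=V(ap,pc)=V(ap,c),
\end{align*}
so the identity $V(ab,c)=V(a,bc)$ holds whenever $b$ is a projection. By bilinearity, it also holds whenever $b$ is a (complex) linear combination of finitely many projections of $A$.

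Next I would extend to arbitrary $b\in A$ by means of the weak$^{*}$-continuity hypothesis, together with spectral approximation. Since $A$ is a von Neumann algebra, it has real rank zero, as recalled in the paragraph preceding the lemma. Therefore every self-adjoint $b\in A$ is the norm-limit of a sequence $b_n\in A_{sa}$ of elements with finite spectrum, and each such $b_n$ is, by the spectral theorem, a real linear combination of mutually orthogonal projections. Because the multiplication of a Banach algebra is jointly norm-continuous, $ab_n\to ab$ and $b_n c\to bc$ in norm, and hence also in the weak$^{*}$-topology. Applying separate weak$^{*}$-to-$\tau$ continuity of $V$ yields
\begin{align*}
V(ab,c)=\lim_n V(ab_n,c)=\lim_n V(a,b_nc)=V(a,bc),
\end{align*}
which establishes the identity whenever $b$ is self-adjoint. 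For a general $b\in A$ I would write $b=\frac{1}{2}(b+b^{*})+i\cdot\frac{1}{2i}(b-b^{*})$ as a sum of two self-adjoint elements, and conclude by bilinearity.

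The only nontrivial ingredient is the projection identity of the first step; the rest is standard density and continuity, made available exactly by the real rank zero property of von Neumann algebras that the authors have recorded just before the lemma. I do not anticipate a serious obstacle, although one should be slightly careful to use norm convergence of $b_n$ to $b$ (so that $ab_n$ and $b_nc$ converge in norm, and thus in the weak$^{*}$-topology) before invoking the separate weak$^{*}$-to-$\tau$ continuity of $V$.
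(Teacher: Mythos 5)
Your proof is correct and follows essentially the same route as the paper's: establish $V(ap,c)=V(a,pc)$ for projections $p$ via the two zero products $(a-ap)(pc)=0$ and $(ap)(c-pc)=0$, extend to finite linear combinations of projections by bilinearity, and then to all of $A$ using real rank zero, norm (hence weak$^*$) convergence, separate weak$^*$-to-$\tau$ continuity, and the decomposition $A=A_{sa}+iA_{sa}$. Your explicit remark that $ab_n\to ab$ and $b_nc\to bc$ in norm (and hence weak$^*$) before invoking the separate continuity of $V$ is a welcome clarification of a point the paper passes over quickly.
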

 \begin{proof}
  Let us fix  $p\in P(A).$ For $a,b\in A$ we have $ap(1-p)b=0$ and $a(1-p)pb=0.$ Thus we have 
  $$V(ap,b)=V(ap,pb)=V(a,pb) .$$ 
  Now let $p_1,\dots,p_n\in P(A),$ $\lambda_1,\dots,\lambda_n\in$  and set $x=\sum_{i=1}^n \lambda_i p_i$ then 
  \begin{equation}\label{eq lem tech normal zp form}
    V(ax,b)=\sum_{i=1}^n \lambda_i V(ap_i,b)=\sum_{i=1}^n \lambda_i V(a,p_ib)=V(a,xb).  
  \end{equation}

 Now, since $A$ has real rank zero, for every $x\in A_{sa}$, there exists a sequence $\{x_n\}\subseteq A_{sa}$ such that each $x_n$ is a real-linear combination of projections and  $x=\lim_n x_n,$ where the convergence of the latter sequence is in norm. Clearly, we also have $x=\lim_n x_n$ in the weak$^*$-topology. This fact together with separate weak$^*$-to-$\tau$ continuity of $V$ and \cref{eq lem tech normal zp form} give 
 \begin{align*}
     V(ax,b)=V(a,xb)
 \end{align*} for all $a,b\in A, x\in A_{sa}$. Finally, the statement follows by linearity together with $A=A_{sa}+iA_{sa}$.
  \end{proof}

\begin{rem}\label{rem with no type I finite summands}
 If $A$ is a von Neumann algebra with no summands of type $I_{fin}$ (that is, summands that are finite of type $I$), then $A$ is spanned by its projections (see \cite[Theorem V.1.19]{Takesaki} and \cite{Golds_Paszc_PAMS_1992} or the proof of Theorem 19 in \cite{BurFerGarPer_Aut_Cont_2011}), and continuity in \cref{lem bilinear weak map zp} is not needed.
\end{rem}

  Let $A$ be a von Neumann algebra. For each $a\in A$ there exists a projection $s_l(a)$ (resp., $s_r(a)$) called the \textit{left support} of $a$ (resp., the \textit{right support} of $a$) such that $Ann_l(a)=A(1-s_l(a))$ (resp., $Ann_r(a)=(1-s_r(a))A$). Moreover, $s_l(a)$ (resp., $s_r(a)$) is the least among all projections $p\in A$ such that $pa=a$ (resp., $ap=a$). If $a=a^*$ then $s_r(a)=s_l(a)$ is called the \textit{support projection} of $a$ and denoted by $s(a)$.
  If $a=u|a|$ is the polar decomposition of $a$, then $s_r(a)=u^*u=s(|a|)$ and $s_l(a)=uu^*=s(|a^*|)$ (see \cite[Definition 1.10.3 and Theorem 1.12.1]{Sakai_Book}).

\begin{lem}\label{lem zp implies cubes have zp}
   Let $A$ be a C$^*$-algebra and $a,b\in A$. If $aa^*abb^*b=0$ then $ab=0$.
\end{lem}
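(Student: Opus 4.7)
My strategy has three stages: transfer the hypothesis to a vanishing product of positives via the C$^*$-identity, use commutativity to reduce the exponents, and finally reverse the transfer to obtain $ab=0$.

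In the first stage, I would apply $\|x\|^2=\|x^*x\|$ to $x=aa^*abb^*b$, recognising $(aa^*abb^*b)^*(aa^*abb^*b)=b^*bb^*(a^*a)^3bb^*b$ as the self-product $X^*X$ with $X=|a|^3bb^*b$, to deduce $|a|^3bb^*b=0$. A second round with $\|y\|^2=\|yy^*\|$ applied to $y=|a|^3bb^*b$ similarly identifies $|a|^3(bb^*)^3|a|^3$ as $YY^*$ with $Y=|a|^3|b^*|^3$, and gives $|a|^3|b^*|^3=0$.

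In the second stage, I would note that $|a|^3|b^*|^3=0$ together with its adjoint $|b^*|^3|a|^3=0$ forces $|a|^3$ and $|b^*|^3$ to commute, so they generate a commutative $C^*$-subalgebra of $A$; Gelfand realises them as continuous functions on a compact space with disjoint supports, and applying $t\mapsto t^{1/3}$ in the continuous functional calculus yields $|a||b^*|=0$.

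In the last stage, a third application of the $C^*$-identity gives the chain
\begin{align*}
 \|ab\|^2 = \|b^*a^*ab\| = \|b^*|a|^2b\| = \||a|b\|^2 = \||a|bb^*|a|\| = \||a||b^*|^2|a|\|,
\end{align*}
and since $|a||b^*|=0$ implies $|b^*||a|=0$, we can write $|a||b^*|^2|a|=(|a||b^*|)(|b^*||a|)=0$, so $ab=0$. The subtle point in the whole argument is the middle stage, where commutativity of $|a|^3$ and $|b^*|^3$ is what allows the exponents to be reduced via functional calculus; the first and third stages are just bookkeeping with $\|x\|^2=\|x^*x\|$ and $\|x\|^2=\|xx^*\|$, provided one spots the correct groupings.
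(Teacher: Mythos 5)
Your proof is correct, and every computational step checks out: $(aa^*abb^*b)^*(aa^*abb^*b)=b^*bb^*(a^*a)^3bb^*b=X^*X$ with $X=|a|^3bb^*b$, so the C$^*$-identity gives $X=0$; then $XX^*=|a|^3(bb^*)^3|a|^3=YY^*$ with $Y=|a|^3|b^*|^3$ gives $Y=0$; the commutative functional calculus legitimately extracts cube roots because $|a|^3|b^*|^3=0=|b^*|^3|a|^3$ makes the two positive elements generate a commutative C$^*$-subalgebra; and the final chain $\|ab\|^2=\||a||b^*|^2|a|\|=\|(|a||b^*|)(|b^*||a|)\|=0$ closes the argument. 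However, your route is genuinely different from the paper's. The paper works in the enveloping von Neumann algebra $A^{**}$: it iterates the hypothesis to get $(aa^*)^nab(b^*b)^m=0$ for all $n,m$, passes to polynomials in $aa^*$ and $b^*b$, and then takes weak$^*$-limits to replace these by the support projections $s(aa^*)$ and $s(b^*b)$, which act as a left unit for $a$ and a right unit for $b$. That argument is natural in the paper's context, where support projections and the bidual are already in constant use, but it relies on weak$^*$-approximation of $s(\cdot)$ by polynomials. Your argument stays entirely inside $A$, uses only the C$^*$-identity and commutative Gelfand theory, and is therefore more elementary and self-contained; it also makes explicit the ``cube root'' mechanism ($|a|^3\perp|b^*|^3$ forces $|a|\perp|b^*|$) that underlies the paper's application of this lemma to odd cubic roots $x^{[\frac{1}{3}]}$ in Theorem \ref{thhm von Neumann algebras}.
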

\begin{proof}

We shall see $a,b$ as elements in the von Neumann algebra $A^{**}$.
We multiply $aa^*abb^*b=0$ by $aa^*$ on the left and $b^*b$ on the right and apply induction to obtain
\begin{align*}
    (aa^*)^n ab(b^*b)^m=0
\end{align*} for all $n,m$. Consequently, for every  polynomial $p$ we have
\begin{align*}
    p(aa^*) ab(b^*b)^m=0.
\end{align*}

 Since $s(aa^*)$ lies in the weak$^*$-closure of $\{p(aa^*): p \mbox{ polynomial}\}$ (see \cite[Proposition 1.10.4]{Sakai_Book}), we have
\begin{align*}
    s(aa^*) ab(b^*b)^m=0.
\end{align*}
The same argument applied to $b^*b$ gives
\begin{align*}
    s(aa^*) abs(b^*b)=0.
\end{align*}

Finally, observe that $s(aa^*)a=a$ and $b s(b^*b)=b$.
\end{proof}

Let $A$ be a  C$^*$-algebra. It is known that for each $a\in A$ there exists a unique element $b\in A$ such that $bb^*b=a$. This element $b$ is called the \textit{odd cubic root} of $a$ and denoted by $a^{[\frac{1}{3}]}$ (see \cite{Kaup_Sing_Val}).

 \begin{thrm}\label{thhm von Neumann algebras}
 Let $A$ be a von Neumann algebra with $A_{at}\ne\{0\}$ and $(X,\tau)$ a topological vector space. Fix $c\in K(A)$ satisfying \cref{VRH}, and let $V:A\times A \to X$
 be a  separately weak*-to-$\tau$ continuous bilinear map that has product property at $c$. Then $V$ has product property at zero. Moreover, there exists a weak$^*$-to-$\tau$ continuous linear map $\vf:A\to X$ such that $V(a,b)=\vf(ab)$ for all $a,b\in A.$
 \end{thrm}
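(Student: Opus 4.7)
The plan is to first prove that $V$ has product property at zero, after which \cref{lem bilinear weak map zp} will supply the identity $V(ab,d)=V(a,bd)$ for all $a,b,d\in A$; setting $\vf(x):=V(1_A,x)$, which is weak$^*$-to-$\tau$ continuous and linear by hypothesis, the computation $V(a,b)=V(1_A\cdot a,b)=V(1_A,ab)=\vf(ab)$ then delivers the desired conclusion.

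For product property at zero the starting point is that $c\in K(A)\subseteq A_{at}$ and $K(A)\cong\bigoplus_\lambda^{c_0}K(H_\lambda)$ is a compact C$^*$-algebra on which \cref{VRH} coincides with \cref{RH}. Hence \cref{lem compact star factor thu zp} applied inside $K(A)$ shows that $c$ factorizes through zero products in $MinPI(K(A))=MinPI(A)$, and \cref{lem fact thru zp xy=0 implies V(x_y)=0} then gives $V(u,v)=0$ for every $u,v\in MinPI(A)$ with $uv=0$. To extend this vanishing, I would decompose $A=A_{at}\oplus A_d$ and split any pair $x=x_1+x_2$, $y=y_1+y_2$ with $xy=0$, so that $x_1y_1=x_2y_2=0$ while the cross products vanish automatically. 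The purely diffuse term $V(x_2,y_2)$ follows from the direct factorization $c=1_{A_{at}}\cdot c$ through $(x_2,y_2)$, using $1_{A_{at}}y_2=0$ and $x_2c=0$. For a mixed term such as $V(x_1,y_2)$, one approximates $x_1$ in the weak$^*$-topology by finite combinations of MinPIs: for a single $u\in MinPI(A)$, \cref{lem compact star factor thu zp} applied to a pair $(u,v)$ with $uv=0$ (which exists because $c\ne 0$ together with \cref{VRH} forces some $B(H_\lambda)$ to satisfy $\dim H_\lambda\ge 3$) produces $a,b\in A_{at}$ with $ub=0$ and $ab=c$, while $ay_2=0$ holds automatically; thus $c$ factorizes through $(u,y_2)$, so $V(u,y_2)=0$, and separate weak$^*$-to-$\tau$ continuity in the first variable upgrades this to $V(x_1,y_2)=0$. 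The symmetric argument handles $V(x_2,y_1)$.

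The most delicate case is $V(x_1,y_1)=0$ for $x_1,y_1\in A_{at}$ with $x_1y_1=0$. Writing $A_{at}=\bigoplus_\lambda B(H_\lambda)$ and truncating each argument to a finite index set in the weak$^*$-topology, a two-fold application of separate continuity reduces the task to showing $V(a,b)=0$ for $a\in B(H_\lambda)$, $b\in B(H_\mu)$ with $ab=0$. The cross case $\lambda\ne\mu$ is handled by a further weak$^*$-approximation by MinPIs together with the first step. For the diagonal case $\lambda=\mu$, the hypothesis \cref{VRH} forces $\pi_\lambda(c)$ to be finite-rank when $\dim H_\lambda=\infty$, so \cref{lem technical Julius} yields a generalized factorization of $\pi_\lambda(c)$ through $(a,b)$ in $B(H_\lambda)$; adjoining the trivial factorization $c-\pi_\lambda(c)=(c-\pi_\lambda(c))\cdot (1_A-1_{B(H_\lambda)})$ in the orthogonal ideal lifts this to a generalized factorization of $c$ through $(a,b)$ in $A$ (the cross-products vanish by orthogonality of the summands), and \cref{lem gen zp factoriz implies pp at zero} then gives $V(a,b)=0$; the case $\dim H_\lambda<\infty$ is treated analogously by combining the MinPI zp-density of the finite-dimensional compact C$^*$-algebra $B(H_\lambda)$ (\cref{lem S zp-dense in compact C*}) with the factorization provided by \cref{lem compact star factor thu zp}, again lifted to $A$ by adjoining the trivial factorization in $B(H_\lambda)^\perp$. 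The main technical obstacle throughout is orchestrating these nested weak$^*$-approximations across the infinite direct sum structure of $A_{at}$ while keeping a witnessing factorization of $c$ available at each stage; separate weak$^*$-to-$\tau$ continuity of $V$ is precisely the instrument that bridges these approximation levels.
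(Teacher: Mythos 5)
Your overall architecture (prove product property at zero, then invoke \cref{lem bilinear weak map zp} with $\vf=V(1,-)$) matches the paper, and your treatment of the diffuse--diffuse, mixed atomic--diffuse, MinPI, and cross-summand cases is sound. The gap is in the diagonal atomic case $a,b\in B(H_\lambda)$, $ab=0$, with $\dim(H_\lambda)=\infty$. You assert that \cref{VRH} forces $\pi_\lambda(c)$ to be finite-rank there, but for an infinite-dimensional summand the condition $\rank(\pi_\lambda(c))\le\dim(H_\lambda)-2$ is vacuous, and the hypothesis $c\in K(A)$ only makes $\pi_\lambda(c)$ a \emph{compact} operator. \cref{lem technical Julius} is stated (and proved, via Julius's argument) only for finite-rank operators, so you cannot invoke a generalized factorization of $\pi_\lambda(c)$ through $(a,b)$ in $B(H_\lambda)$; this is precisely why the paper keeps the finite-rank case (\cref{thm von neumann finite rank}, \cref{cor von neumann finit bounded}) separate from the compact case treated in this theorem.

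The paper closes this case by a mechanism you would need to replicate: it first gets $V=0$ on zero products of elements of $soc(A)$ via zp-density of $MinPI(A)$ in $K(A)$, then extends to all of $K(A)$ by \emph{norm} approximation, using that the approximants can be chosen with pairwise zero products (\cref{lem S zp-dense in compact C*}), and finally passes from $K(A)$ to all of $A_{at}$ by the odd-cubic-root trick: $xy=0$ implies $x^{[\frac{1}{3}]}y^{[\frac{1}{3}]}=0$ by \cref{lem zp implies cubes have zp}, and the nets $x'_{\lambda}(x^{[\frac{1}{3}]})^*x^{[\frac{1}{3}]}$ and $y^{[\frac{1}{3}]}(y^{[\frac{1}{3}]})^*y'_{\mu}$ lie in the ideal $K(A)$, converge weak$^*$ to $x$ and $y$, and still have pairwise zero products. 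A plain weak$^*$-approximation of $a$ and $b$ by finite-rank operators, as in your cross-summand argument, does not preserve the relation $ab=0$, so without some such device your diagonal case does not go through.
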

 \begin{proof}
  Since $soc(A)\ne\{0\}$, then $K(A)=\overline{soc(A)}^{\|.\|}$ is a non-zero two sided ideal of $A.$ Moreover, if $A=A_{at}\oplus A_d$ is the atomic decomposition of $A,$ then   $A_{at}=\overline{soc(A)}^{w^*}.$ We have $soc(A)\subseteq K(A) \subseteq A_{at}$. Since $A_{at}$ is weak$^*$-closed it follows that $\overline{K(A)}^{w^*}=A_{at}.$ 
  
   Let us take $x,y\in A$ such that $xy=0.$ 
   
   \textit{Case 1}. $x,y\in A_d$. By Cohen factorization property there exist $a,b\in A_{at}$ such that $ab=c$. Since $ay=bx=0$ and $ab=c$, we conclude that $V(x,y)=0$ by \cref{lem fact thru zp xy=0 implies V(x_y)=0}.
   
   \textit{Case 2}. $x,y\in A_{at}$. The proof will be divided in several cases from a restricted one to the general one.
   
   \textit{Case 2.1.} $x,y\in MinPI(A)$. By \cref{lem compact star factor thu zp} applied to $K(A)$ the element $c\in K(A)$ factorizes through $(x,y)$, and hence $V(x,y)=0$ by \cref{lem fact thru zp xy=0 implies V(x_y)=0}. 
   
   \textit{Case 2.2.} $x,y\in soc(A)$. Then by \cref{lem S zp-dense in compact C*} applied to $c$ as an element in $K(A)$ there exist $u_1,\dots,u_n,v_1,\dots, v_m\in MinPi(A)$ and $\alpha_1,\dots,\alpha_n,\beta_1,\dots, \beta_m \in \CC$ such that $u_i v_j=0,\forall i,j$, and $x=\sum_i \alpha_i u_i,y=\sum_j \beta_j v_j$. By bilinearity and the result of Case 2.1 we have 
   \begin{align}\label{eq 1 thm compact el in vn alg}
       V(x,y)=\sum_{i,j} \alpha_i \beta_j V(u_i,v_j)=0.
   \end{align}
   Thus, $V$ has product property at zero when restricted to $soc(A)$. 
   
   \textit{Case 2.3.} $x,y\in K(A)$. By \cref{lem S zp-dense in compact C*} there exist two sequences $\{x_n\},\{y_m\}\subseteq soc(A)$ such that $x_n y_m=0,\forall n,m$, and $x_n\to x,y_m\to y$ in the norm topology. Then $x_n\to x,y_m\to y$ also in the weak$^*$-topology. By \cref{eq 1 thm compact el in vn alg} we have 
   \begin{align}\label{eq 2 thm compact el in vn alg}
       V(x_n,y_m)=0, \forall n,m.
   \end{align}
 Now, taking into account that $V$ is separately weak*-to-$\tau$ continuous, we pass to the weak$^*$-limit on the left-hand side of \cref{eq 2 thm compact el in vn alg} to show that
   \begin{align*}
       V(x,y_m)=0, \forall m.
   \end{align*}
A new application of separately weak*-to-$\tau$ continuity of $V$ yields $V(x,y)=0$. Thus, we have shown that $V$ has product property at zero when restricted to $K(A)$. 

\textit{Case 2.4.} Now consider the general case $x,y\in A_{at}$. By triple functional calculus, there exist odd cubic roots $x^{[\frac{1}{3}]},y^{[\frac{1}{3}]}\in A_{at}$. From $xy=0$ we have 
\begin{align*}
    x^{[\frac{1}{3}]}(x^{[\frac{1}{3}]})^*x^{[\frac{1}{3}]}y^{[\frac{1}{3}]}(y^{[\frac{1}{3}]})^*y^{[\frac{1}{3}]}=0,
\end{align*}
whence, by \cref{lem zp implies cubes have zp},
\begin{align}\label{eq dijsoint cubes}
    x^{[\frac{1}{3}]}y^{[\frac{1}{3}]}=0.
\end{align}
 By weak$^*$-density of $K(A)$ in $A_{at}$ there exist nets 
 $\{x'_{\lambda}\},\{y'_{\mu}\}\subseteq K(A)$ such that $x'_{\lambda}\to x^{[\frac{1}{3}]},y'_{\mu}\to y^{[\frac{1}{3}]}$ in the weak$^*$-topology. Define $x_{\lambda}=x'_{\lambda} (x^{[\frac{1}{3}]})^*x^{[\frac{1}{3}]}$ and $y_{\mu}=y^{[\frac{1}{3}]}(y^{[\frac{1}{3}]})^*y'_{\mu}$. Since $K(A)$ is an ideal, it follows that $\{x_{\lambda}\},\{y_{\mu}\}\subseteq K(A)$. Now, by  \cref{eq dijsoint cubes} we have $x_{\lambda} y_{\mu}=0,\forall \lambda,\mu$. Since $V$ has product property at zero when restricted to $K(A)$, by the result of Case 2.3 we have
 \begin{align}\label{eq xlambymu}
     V(x_{\lambda}, y_{\mu})=0,\forall \lambda,\mu.
 \end{align}
 By separate weak$^*$-continuity of the product in $A$ we have  $x_{\lambda}\to x,y_{\mu}\to y$ in the weak$^*$-topology. Now \cref{eq xlambymu} together with consecutive applications of 
 separate weak*-to-$\tau$ continuity of $V$ give $V(x,y)=0$.
 We have shown that $V$ has product property when restricted to $A_{at}$.

\textit{Case 3}. $x\in A_{at},y\in A_d.$ We first suppose that $x\in K(A).$ Let us fix $u\in MinPI(A)=MinPI(K(A))$ and let $v\in MinPI(A)=MinPI(K(A))$ such that $uv=0.$ By \cref{lem compact star factor thu zp} applied to $c$ as an elements in $K(A)$ we conclude that there exist $a,b\in K(A)$ such that $av=ub=0$ and $ab=c$. We have $ay=0,ub=0$ and $ab=c$ whence $V(u,y)=0$ for any $u\in MinPI(A)$ by \cref{lem fact thru zp xy=0 implies V(x_y)=0}. By linearity we have, $V(x,y)=0$ for every $x\in soc(A).$ Finally, by weak$^*$-density of $soc(A)$ in $A_{at}$ and separately weak*-to-$\tau$ continuity of $V$ we have $V(x,y)=0$.
  
\textit{Case 4}. $x\in A_{d},y\in A_{at}$. This case is analogous to Case 3.

Finally, for $x,y\in A$ the results follows by bilinearity and the previous cases.

Now define $\vf:A\to X$ by $\vf(a)=V(1,a).$ Then $\vf$ is weak$^*$-to-$\tau$ continuous and by \cref{lem bilinear weak map zp} we have $V(a,b)=V(1,ab)=\vf(ab)$ for all $a,b\in A.$
\end{proof}

\section{Applications}\label{sec-appl}

 The tools developed in the previous sections provide a unified approach to study several  homomorphism-like and derivation-like maps at a fixed point. Before showing some of these applications, we shall study pairs of maps that preserve zero products.  

\subsection{Pairs of linear maps that have product property}

Let $\vf,\psi:A \to B$ be a pair of linear mappings between two $K$-algebras and let us fix $c\in A.$ We say that the pair $(\vf,\psi)$ has \textit{product property at $c$} if the bilinear map $V(a,b)=\vf(a) \psi(b),a,b\in A$, has product property at $c$. In case $c=0$ we shall say that $(\vf,\psi)$ \textit{preserves zero products}. On the other hand, if $\vf=\psi$, then we say that $\vf$ \textit{has product property at $c$}, and, whenever $c=0$, $\vf$ \textit{preserves zero products}.

A similar concept  has been studied previously in \cite{Wolff-Disj-Pres,PerGar-Orth-Pairs}. Pairs of maps that preserve zero products on certain finite-dimensional central simple algebras were described by Bre\v{s}ar in \cite{Bresar_Mult_Algebra_2012}. More recently, Costara has characterized pairs of maps between matrix algebras that have product property at a fixed matrix~\cite{Costara-Racsam-2022}. Here we shall generalize some of Costara's results to infinite-dimensional C$^*$-algebras.

Recall that an algebra where every one-sided invertible element is invertible is called \emph{Dedekind-finite} or \emph{von Neumann finite}. A unital C$^*$-algebra is Dedekind-finite precisely when it is a finite C$^*$-algebra (see \cite[Lemma 5.1.2]{Rordam_Larsen_Laust_Book}).
 
 \begin{lem}\label{lem ab invertible imples a and b}
 Let $A$ be an associative algebra and $a,b\in A$. If $ab$ is invertible, then $a$ is right invertible and $b$ is left invertible.
 \end{lem}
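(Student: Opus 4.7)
The plan is essentially a one-line computation: if $ab$ has a two-sided inverse $c = (ab)^{-1}$, then $(ab)c = 1$ rewrites by associativity as $a(bc) = 1$, exhibiting $bc$ as a right inverse of $a$; symmetrically, $c(ab) = 1$ rewrites as $(ca)b = 1$, exhibiting $ca$ as a left inverse of $b$.

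Thus the only step is to invoke associativity on the given equations $(ab)(ab)^{-1} = 1 = (ab)^{-1}(ab)$, and there is no real obstacle. I would state the proof in two symmetric lines, with no case distinction and no appeal to the earlier machinery of the paper.
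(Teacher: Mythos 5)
Your proof is correct and is exactly the paper's argument: associativity applied to $ab(ab)^{-1}=1$ gives $a\bigl(b(ab)^{-1}\bigr)=1$, and to $(ab)^{-1}ab=1$ gives $\bigl((ab)^{-1}a\bigr)b=1$. Nothing is missing.
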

 \begin{proof}
 We have $1=(ab)^{-1}ab=((ab)^{-1}a)b$ which shows that $b$ is left invertible. Similarly, $1=ab(ab)^{-1}=a(b (ab)^{-1}) $ shows that $a$ is right invertible.
 \end{proof}
 
\begin{lem}\label{vf(1)psi(ab)=vf(a)psi(b)=vf(ab)psi(1)}
    Let $A,B$ be Banach algebras with $A$ unital and zpd and $(\varphi,\psi)$ be a pair of bounded linear maps $A\to B$ that preserves zero products. Then
  \begin{align}\label{eq phi psi zp identity}
      \varphi(1)\psi(ab)=\varphi(a)\psi(b)=\varphi(ab)\psi(1), \forall a,b\in A.
  \end{align} 
  In particular,
  \begin{align}\label{vf(1)psi(a)=vf(a)psi(1)}
    \varphi(1)\psi(a)=\varphi(a) \psi(1), \forall a\in A.   
 \end{align}
\end{lem}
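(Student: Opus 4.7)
The plan is to reduce the statement to a single application of the hypothesis that $A$ is a zero product determined Banach algebra. Define the bilinear map $V:A\times A\to B$ by $V(a,b):=\varphi(a)\psi(b)$. Since $\varphi$ and $\psi$ are bounded linear, $V$ is a bounded bilinear map; since $(\varphi,\psi)$ preserves zero products, $V$ has product property at zero in the sense of the preliminaries.

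Because $A$ is a zpd Banach algebra, there exists a bounded linear map $T:A\to B$ such that
\begin{align*}
V(a,b)=\varphi(a)\psi(b)=T(ab), \quad \forall a,b\in A.
\end{align*}
This is the entire non-trivial input; everything else is bookkeeping using the unit of $A$.

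Next I would evaluate this identity at $a=1$ and at $b=1$. The first gives $\varphi(1)\psi(b)=T(b)$ and the second gives $\varphi(a)\psi(1)=T(a)$ for all $a,b\in A$. Comparing the two yields
\begin{align*}
\varphi(1)\psi(a)=T(a)=\varphi(a)\psi(1),\quad \forall a\in A,
\end{align*}
which already establishes \cref{vf(1)psi(a)=vf(a)psi(1)}.

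Finally, to obtain \cref{eq phi psi zp identity}, I would substitute $ab$ into the two expressions for $T$: this gives $T(ab)=\varphi(1)\psi(ab)=\varphi(ab)\psi(1)$, and combining with $\varphi(a)\psi(b)=T(ab)$ yields the chain
\begin{align*}
\varphi(1)\psi(ab)=\varphi(a)\psi(b)=\varphi(ab)\psi(1).
\end{align*}
There is essentially no obstacle here; the content of the lemma is precisely that the zpd hypothesis on $A$ (applied to the bounded bilinear map $(a,b)\mapsto\varphi(a)\psi(b)$) forces the products on the two sides of \cref{eq phi psi zp identity} to factor through a common linear map on $A$, and the unit of $A$ then converts the factorization into the stated symmetric identity.
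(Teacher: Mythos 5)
Your proposal is correct and follows essentially the same route as the paper: both define $V(a,b)=\varphi(a)\psi(b)$, invoke the zpd hypothesis on $A$ (the paper cites \cite[Proposition 4.9]{Bresar-zpd}, you use the definition directly) to obtain a bounded linear $T$ with $\varphi(a)\psi(b)=T(ab)$, and then specialize at the unit. The only cosmetic difference is that you spell out the substitutions $a=1$ and $b=1$ that the paper leaves implicit in ``Then \cref{eq phi psi zp identity} follows.''
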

\begin{proof}
      By definition, the bilinear map $V:A\times A\to B,\; V(a,b)=\varphi(a) \psi(b)$, has product property at zero. By  \cite[Proposition 4.9]{Bresar-zpd} there exists a bounded linear map $T:A\to B$ such that $V(a,b)=T(ab)$ for all $a,b\in A.$ Then \cref{eq phi psi zp identity} follows.  Now we apply \cref{eq phi psi zp identity}  with $b=1$ to obtain \cref{vf(1)psi(a)=vf(a)psi(1)}.
\end{proof}

 \begin{lem}\label{lem zp pair are inverti preserving}
 Let $A,B$ be unital Banach algebras with $A$ zpd and $B$ Dedekind-finite and let $(\varphi,\psi)$ be a pair of bounded linear maps $A\to B$ that preserves zero products. Suppose further that both $\varphi(A)$ and $\psi(A)$ contain invertible elements. Then 
 \begin{enumerate}
     \item\label{vf-and-psi-preserve-inv} $\varphi$ and $\psi$ preserve invertibility;
     \item \label{vf-and-psi-same-ker} $\ker( \varphi)=\ker( \psi) $ is a two-sided ideal of $A$;
     \item \label{vf-and-psi-surjective}   $\varphi$  is surjective if and only if $\psi$ is surjective.
 \end{enumerate}
 \end{lem}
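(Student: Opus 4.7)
The key structural identity comes from \cref{vf(1)psi(ab)=vf(a)psi(b)=vf(ab)psi(1)}: setting $e:=\varphi(1)$ and $f:=\psi(1)$, one has
\begin{align*}
\varphi(a)\psi(b)=e\,\psi(ab)=\varphi(ab)\,f\quad \text{for all }a,b\in A,
\end{align*}
and in particular $e\,\psi(a)=\varphi(a)\,f$ for every $a\in A$. The plan is to first prove that both $e$ and $f$ are invertible in $B$; once this is known, all three assertions drop out by direct manipulation of these identities.

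To establish the invertibility of $e$ and $f$, I pick $a_0,b_0\in A$ with $v:=\varphi(a_0)$ and $w:=\psi(b_0)$ invertible, afforded by the hypothesis on the ranges of $\varphi$ and $\psi$. Specialising the above identity to $(a,b)=(a_0,b_0)$ gives
\begin{align*}
vw=e\,\psi(a_0 b_0)=\varphi(a_0 b_0)\,f.
\end{align*}
Since $vw$ is invertible, \cref{lem ab invertible imples a and b} shows that $e$ is right invertible and that $f$ is left invertible in $B$, and the Dedekind-finiteness of $B$ then upgrades these one-sided inverses to genuine two-sided inverses. This is the only genuinely delicate step of the proof, and it is exactly where the Dedekind-finiteness hypothesis on $B$ is used; a priori the computation only produces one-sided inverses.

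For \cref{vf-and-psi-preserve-inv}, take an invertible $a\in A$. Then $\varphi(a)\psi(a^{-1})=e\psi(1)=ef$ is invertible, so \cref{lem ab invertible imples a and b} makes $\varphi(a)$ right invertible and hence invertible by Dedekind-finiteness of $B$; a symmetric argument using $\varphi(a^{-1})\psi(a)=ef$ yields the invertibility of $\psi(a)$. For \cref{vf-and-psi-same-ker}, the invertibility of $e$ and $f$ combined with $e\psi(a)=\varphi(a)f$ gives $\psi(a)=e^{-1}\varphi(a)f$, whence immediately $\ker(\varphi)=\ker(\psi)=:I$; that $I$ is a two-sided ideal follows since, for any $a\in I$ and $b\in A$, $e\psi(ab)=\varphi(a)\psi(b)=0$ and $e\psi(ba)=\varphi(b)\psi(a)=0$, so $\psi(ab)=\psi(ba)=0$ and hence $ab,ba\in I$. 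Finally, for \cref{vf-and-psi-surjective}, the map $x\mapsto e^{-1}xf$ is a bijection of $B$ onto itself; since $\psi(a)=e^{-1}\varphi(a)f$ for every $a\in A$, we obtain $\psi(A)=e^{-1}\varphi(A)f$, so $\varphi$ is surjective if and only if $\psi$ is.
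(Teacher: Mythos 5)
Your proposal is correct and follows essentially the same route as the paper: both establish invertibility of $\varphi(1)$ and $\psi(1)$ via \cref{vf(1)psi(ab)=vf(a)psi(b)=vf(ab)psi(1)}, \cref{lem ab invertible imples a and b} and Dedekind-finiteness, and then derive all three assertions from the identity $\varphi(1)\psi(a)=\varphi(a)\psi(1)$. The only cosmetic difference is that you make the relation $\psi(a)=\varphi(1)^{-1}\varphi(a)\psi(1)$ explicit for parts \cref{vf-and-psi-same-ker} and \cref{vf-and-psi-surjective}, which the paper leaves implicit.
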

 
 \begin{proof}
  \cref{vf-and-psi-preserve-inv}. By assumption there exist $a,b\in A$ such that $\varphi(a)$ and $\psi(b)$ are in\-ver\-tible in $B$. Then $\varphi(a)\psi(b)$ is invertible. Hence, by \cref{lem ab invertible imples a and b,eq phi psi zp identity}, 
 $\varphi(1)$ and $\psi(1)$ are invertible because $B$ is Dedekind-finite. 
  
  Now take $a,b\in A$ such that $ab=1.$ A new application of  \cref{eq phi psi zp identity} gives
\begin{align*}
    \varphi(1)\psi(1)=\varphi(1)\psi(ab)=\varphi(a)\psi(b)
\end{align*}
  which shows that $\varphi(a) \psi(b)$ is invertible. By  \cref{lem ab invertible imples a and b} and the fact that $B$ is Dedekind-finite both $\varphi(a)$ and $\psi(b)$ are invertible. We have shown that $\vf$ and $\psi$ preserve invertibility.\smallskip
  
 \cref{vf-and-psi-same-ker}.
  Since $\varphi(1)$ and $\psi(1)$ are invertible, by \cref{vf(1)psi(a)=vf(a)psi(1)} we have $\varphi(a)=0 \Leftrightarrow \psi(a)=0$, that is, $\ker(\varphi)=\ker(\psi).$ Denote $I:=\ker(\varphi)=\ker(\psi)$. Take $a\in I.$ By  \cref{eq phi psi zp identity}, $\varphi(1)\psi(ab)=0$ for all $b\in A.$ Hence $aA \subseteq I.$  Symmetrically,  $Ab \subseteq I$ for any $b\in I$. Thus $I$ is a two-sided ideal of $A$.\smallskip
   
  \cref{vf-and-psi-surjective}.  The statement follows from \cref{vf(1)psi(a)=vf(a)psi(1)} and \cref{vf-and-psi-preserve-inv}.  
  \end{proof}

\begin{cor}\label{cor vf-bij-iff-psi-bij}
      Under the conditions of \cref{lem zp pair are inverti preserving} the map $\varphi$ is bijective if and only of $\psi$ is bijective. 
\end{cor}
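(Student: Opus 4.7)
The plan is to observe that \cref{cor vf-bij-iff-psi-bij} is an immediate consequence of \cref{lem zp pair are inverti preserving}, and no further work is required beyond combining its parts \cref{vf-and-psi-same-ker} and \cref{vf-and-psi-surjective}.

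Concretely, bijectivity factors into injectivity and surjectivity, and both of these transfer between $\vf$ and $\psi$ by the lemma. First, by \cref{lem zp pair are inverti preserving}\cref{vf-and-psi-same-ker} the maps $\vf$ and $\psi$ share the same kernel, so
\begin{align*}
    \vf \text{ is injective} \iff \ker(\vf)=\{0\} \iff \ker(\psi)=\{0\} \iff \psi \text{ is injective}.
\end{align*}
Second, by \cref{lem zp pair are inverti preserving}\cref{vf-and-psi-surjective} the map $\vf$ is surjective if and only if $\psi$ is surjective. Combining both equivalences yields the claim.

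There is no real obstacle here; the whole content of the corollary is already packaged inside \cref{lem zp pair are inverti preserving}, and the proof is essentially a one-line appeal to its conclusions.
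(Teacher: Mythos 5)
Your proof is correct and matches the paper's intent exactly: the corollary is stated without proof precisely because it follows immediately by combining parts \cref{vf-and-psi-same-ker} and \cref{vf-and-psi-surjective} of \cref{lem zp pair are inverti preserving}, splitting bijectivity into injectivity (equal kernels) and surjectivity. Nothing further is needed.
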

\begin{rem}\label{rem lemma pairs w*}
   It is well known that every weak$^*$-continuous linear map is bounded. Thus items \cref{vf-and-psi-preserve-inv,vf-and-psi-same-ker,vf-and-psi-surjective} of \cref{lem zp pair are inverti preserving} are also true if $A$ and $B$ are von Neumann algebras with $B$ finite and $\vf$ and $\psi$ are weak$^*$-continuous, or if $A$ and $B$ are algebras with $A$ zpd and $B$ Dedekind-finite.
\end{rem}




 \begin{thrm}\label{prop zp pairs bij finite case} 
Let $A$ and $B$ be unital  C$^*$-algebras with $B$ finite  and let $(\varphi,\psi)$ be a pair of bounded  linear maps $A\to B$ that preserves zero products. Suppose further that both $\varphi(A)$ and $\psi(A)$ contain invertible elements. Then there exists a homomorphism $\rho:A\to B$ such that $\varphi=\varphi(1)\rho(-)$ and  $\psi= \rho(-) \psi(1).$ 
 \end{thrm}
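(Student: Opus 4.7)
The plan is to leverage the preparatory results already established in \cref{vf(1)psi(ab)=vf(a)psi(b)=vf(ab)psi(1),lem zp pair are inverti preserving}, which do most of the heavy lifting. Since $A$ is a C$^*$-algebra (hence a zpd Banach algebra by the remark in the Preliminaries) and $B$ is a finite C$^*$-algebra (hence Dedekind-finite by \cite[Lemma 5.1.2]{Rordam_Larsen_Laust_Book}), the pair $(\varphi,\psi)$ meets the hypotheses of \cref{lem zp pair are inverti preserving}. In particular, $\varphi(1)$ and $\psi(1)$ are invertible in $B$.

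From \cref{vf(1)psi(a)=vf(a)psi(1)} we have the key identity $\varphi(1)\psi(a)=\varphi(a)\psi(1)$ for every $a\in A$. Multiplying on the left by $\varphi(1)^{-1}$ and on the right by $\psi(1)^{-1}$ shows
\begin{equation*}
\varphi(1)^{-1}\varphi(a)=\psi(a)\psi(1)^{-1},\quad a\in A.
\end{equation*}
This common value is a well-defined bounded linear map, so I would set
\begin{equation*}
\rho(a):=\varphi(1)^{-1}\varphi(a)=\psi(a)\psi(1)^{-1}.
\end{equation*}
By construction $\varphi(a)=\varphi(1)\rho(a)$ and $\psi(a)=\rho(a)\psi(1)$ for all $a\in A$, giving the desired factorizations; moreover $\rho(1)=1$.

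To verify multiplicativity of $\rho$, I would invoke \cref{eq phi psi zp identity}, which gives $\varphi(a)\psi(b)=\varphi(1)\psi(ab)$ for all $a,b\in A$. Substituting the factorizations of $\varphi$ and $\psi$ through $\rho$, this becomes
\begin{equation*}
\varphi(1)\rho(a)\rho(b)\psi(1)=\varphi(1)\rho(ab)\psi(1),
\end{equation*}
and cancelling the invertible elements $\varphi(1)$ on the left and $\psi(1)$ on the right yields $\rho(ab)=\rho(a)\rho(b)$.

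The bulk of the argument is actually carried out in the earlier lemma establishing the invertibility of $\varphi(1)$ and $\psi(1)$; once this is in hand, the construction of $\rho$ and the check of the homomorphism property are purely formal manipulations with the two identities in \cref{eq phi psi zp identity,vf(1)psi(a)=vf(a)psi(1)}. Consequently I do not anticipate any serious obstacle; the only subtlety is to confirm that the two natural expressions $\varphi(1)^{-1}\varphi(a)$ and $\psi(a)\psi(1)^{-1}$ indeed agree, which is precisely the content of \cref{vf(1)psi(a)=vf(a)psi(1)}.
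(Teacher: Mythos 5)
Your proposal is correct and follows essentially the same route as the paper: both rest on the invertibility of $\varphi(1)$ and $\psi(1)$ from \cref{lem zp pair are inverti preserving} and then extract $\rho$ and its multiplicativity from the two identities in \cref{vf(1)psi(ab)=vf(a)psi(b)=vf(ab)psi(1)}. The only cosmetic difference is that the paper re-applies those identities to the normalized pair $(\varphi(1)^{-1}\varphi,\psi(\cdot)\psi(1)^{-1})$, whereas you manipulate them directly for $(\varphi,\psi)$ and cancel the invertible factors; both arguments are equally valid.
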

 \begin{proof}
  By \cref{lem zp pair are inverti preserving}\cref{vf-and-psi-preserve-inv}, $\vf(1)$ and $\psi(1)$ are invertible elements. The mappings $\rho_1=\varphi(1)^{-1}\vf(-)$ and $\rho_2=\psi(-) \psi(1)^{-1}$ are bounded and unital. Moreover, the pair $(\rho_1,\rho_2)$ preserves zero products. Thus, we can apply \cref{vf(1)psi(a)=vf(a)psi(1)} to $(\rho_1,\rho_2)$ to show that $\rho_1=\rho_2.$ Set $\rho:=\rho_1=\rho_2$. The mapping $\rho$ is bounded, unital and preserves zero products. Then $\rho$ is a homomorphism by \cref{eq phi psi zp identity} applied to $(\rho,\rho)$. Clearly, $\vf=\varphi(1) \rho(-)$ and $\psi=\rho(-)\psi(1)$.
  \end{proof}

Let $\vf:A \to B$ be a linear map between $^*$-algebras. We say that $\vf$ is \textit{symmetric} if $\vf(a^*)=\vf(a)^*$ for all $a\in A$.  The next lemma is easy to prove.
\begin{lem}\label{lem invertible selfadjoint}
    Let $A$ be a unital $^*$-algebra. Then every one-sided invertible self-adjoint element is invertible. 
\end{lem}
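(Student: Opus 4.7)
The plan is to exploit the symmetry of self-adjointness under the involution. Assume without loss of generality that $a\in A$ is self-adjoint and left invertible, so there exists $b\in A$ with $ba=1$. Applying the involution to this identity and using $a^*=a$, I would obtain $ab^*=1$, which says that $a$ is also right invertible with right inverse $b^*$. Combining a left inverse and a right inverse in any unital associative algebra forces $a$ to be (two-sided) invertible, with $b=b(ab^*)=(ba)b^*=b^*$. The right-invertible case is entirely symmetric: if $ab=1$, then $b^*a=1$ after taking adjoints, so $a$ is left invertible and hence invertible.

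The argument is essentially a one-liner and I do not anticipate any obstacle; it just uses the standard fact that in a unital associative algebra an element having both a left and a right inverse is invertible, together with the fact that the involution of a $^*$-algebra reverses products and fixes self-adjoint elements.
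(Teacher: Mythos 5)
Your argument is correct and is precisely the standard one the paper has in mind (the paper omits the proof, remarking only that the lemma is ``easy to prove''): taking adjoints of $ba=1$ gives $ab^*=1$, and an element with both a left and a right inverse in a unital associative algebra is invertible. No gaps.
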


  \begin{rem}\label{prop zp pairs bij symmetric case}
  \cref{prop zp pairs bij finite case} holds for non-finite $B$, whenever $\vf$ and $\psi$ are symmetric.

Indeed, since $\varphi$ and $\psi$ are symmetric, then $\varphi(1),\psi(1) \in B_{sa}.$ Furthermore, there exist $a,b$ such that $\vf(a)$ and $\psi(b)$ are invertible. Hence by \cref{vf(1)psi(ab)=vf(a)psi(b)=vf(ab)psi(1)} we see that $\varphi(1)$ is right invertible and $\psi(1)$ is left invertible.  Thus, $\varphi(1)$ and $\psi(1)$ are invertible thanks to \cref{lem invertible selfadjoint}. The rest of the proof is the same as that of \cref{prop zp pairs bij finite case}. 
 \end{rem}

\begin{cor}\label{cor pairs with pp at c}
Let $A,B$ be von Neumann algebras such that $A_{at}\ne\{0\}$  and $B$ is finite. Let $\varphi,\psi:A\to B$ linear maps. Let us fix $c \in A_{at}$ satisfying \cref{VRH} and suppose that one of the following holds
\begin{enumerate}
    \item\label{cor pairs with pp at c item 1} $c\in soc(A)$ and $\varphi$, $\psi$ are bounded, or 
    \item\label{cor pairs with pp at c item 2} $c\in K(A)$ and $\varphi$, $\psi$ are weak$^*$-continuous.
    \end{enumerate}
    If $(\varphi,\psi)$ has product property at $c$ and  $\varphi(A)$ and $\psi(A)$ contain invertible elements then 
    there exists a homomorphism $ \rho :A\to B$ such that $\varphi=\varphi(1) \rho(-)$ and $\psi= \rho (-)\psi(1)$.
\end{cor}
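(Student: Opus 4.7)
The plan is to reduce the product-property-at-$c$ hypothesis to the property of preserving zero products, so that \cref{prop zp pairs bij finite case} applies directly.

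Define the bounded (resp.\ separately weak$^*$-continuous) bilinear map $V:A\times A\to B$ by $V(a,b)=\varphi(a)\psi(b)$. By hypothesis, $(\varphi,\psi)$ has product property at $c$, which is equivalent to saying that $V$ has product property at $c$ in the sense introduced in the Preliminaries. In case \cref{cor pairs with pp at c item 1}, $c\in soc(A)$ is a finite-rank element satisfying \cref{VRH} and $V$ is bounded, so \cref{cor von neumann finit bounded} applies and gives that $V$ has product property (in particular, at zero).

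In case \cref{cor pairs with pp at c item 2}, $c\in K(A)$ satisfies \cref{VRH} and $V$ is separately weak$^*$-to-weak$^*$ continuous: indeed, for fixed $b\in A$, the map $a\mapsto V(a,b)=\varphi(a)\psi(b)$ is the composition of the weak$^*$-continuous map $\varphi$ with the right-multiplication operator by $\psi(b)$ on $B$, and the latter is weak$^*$-continuous because the product in a von Neumann algebra is separately weak$^*$-continuous; the argument for $V(a,-)$ is symmetric. Taking $\tau$ to be the weak$^*$-topology on $B$, \cref{thhm von Neumann algebras} gives that $V$ has product property at zero. Thus, in both cases, $(\varphi,\psi)$ preserves zero products.

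At this point, $A$ and $B$ are unital C$^*$-algebras (every von Neumann algebra is unital) with $B$ finite, and $\varphi,\psi$ are bounded linear maps (weak$^*$-continuity implies boundedness, as noted in \cref{rem lemma pairs w*}) whose images contain invertible elements. Applying \cref{prop zp pairs bij finite case} to the pair $(\varphi,\psi)$ yields the desired homomorphism $\rho:A\to B$ with $\varphi=\varphi(1)\rho(-)$ and $\psi=\rho(-)\psi(1)$. The main conceptual step is the verification that the hypotheses of the two earlier main theorems are met; the only mild subtlety is the continuity check in case \cref{cor pairs with pp at c item 2}, which reduces to separate weak$^*$-continuity of multiplication in $B$.
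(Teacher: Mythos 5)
Your proof is correct and follows essentially the same route as the paper: form $V(a,b)=\varphi(a)\psi(b)$, use the results of \cref{sec-von-Neumann} (the finite-rank/bounded case and the compact/weak$^*$-continuous case, respectively) to conclude that $V$ has product property at zero, and then invoke \cref{prop zp pairs bij finite case}. The only differences are cosmetic — you cite \cref{cor von neumann finit bounded} where the paper cites \cref{thm von neumann finite rank}, and you spell out the separate weak$^*$-continuity of $V$ and the fact that weak$^*$-continuity implies boundedness, details the paper leaves implicit.
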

\begin{proof}
 Define the bilinear map $V(a,b)=\varphi(a) \psi(b), a,b\in A$.  If \cref{cor pairs with pp at c item 1} holds then $V$ is bounded and we can apply \cref{thm von neumann finite rank} to show that $V$ has product property at zero. In case \cref{cor pairs with pp at c item 2}, $V$ is separately weak$^*$-continuous and  we apply \cref{thhm von Neumann algebras}. In any case, $V$ has product property at zero and thus the pair $(\varphi,\psi)$ preserves zero products. 
 By \cref{prop zp pairs bij finite case} or \cref{rem lemma pairs w*}  there exists a homomorphism $ \rho:A\to B$ such that $\varphi(a)=\varphi(1) \rho(-)$ and $\psi= \rho(-)\psi(1)$.
 \end{proof}

\begin{rem}
If in \cref{cor pairs with pp at c}  the maps $\psi$ and $\psi$ are symmetric then $B$ does not need to be finite. The proof of this fact can be obtained by replacing \cref{prop zp pairs bij symmetric case}  by \cref{prop zp pairs bij finite case} in the proof of \cref{cor pairs with pp at c}.
\end{rem}


\subsection{Linear maps with product property at a fixed point}

When we deal with single maps instead of pairs of maps, we are able to obtain  more general results than in the case of pairs of maps.

Let $A$ be a C$^*$-algebra. The \textit{multiplier algebra} of $A$ is  defined as $M(A)=\{x\in A^{**}: ax,xa\in A, \forall a\in A\}.$ It is well known that $M(A)$ is a unital C$^*$-algebra and contains $A$ as an ideal. If $A$ is already unital then $M(A)=A$ (see \cite{Peds_Multipliers}).

\begin{thrm}\label{thm map-pp-at-c-compact case}
Let $A$ be a compact C$^*$-algebra and $B$ be a Banach algebra. Let $c\in A$ satisfting \cref{RH} and $\varphi:A\to B$ be a bounded linear map that has product property at $c$. Then $\varphi$ preserves zero products. Moreover, if $B$ is also a C$^*$-algebra and $\varphi$ is bijective,  then there exist $h\in Z(M(B))$ and an isomorphism $\rho: A\to B$ such that $\varphi=h \rho (-)$. 
\end{thrm}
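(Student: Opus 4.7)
My approach is to first derive the identity $\varphi(a)\varphi(b)=h\varphi(ab)$ for a central multiplier $h$, and then produce $\rho$ by dividing out $h$. For the first assertion, consider the bounded bilinear map $V\colon A\times A\to B$ defined by $V(a,b)=\varphi(a)\varphi(b)$. The hypothesis that $\varphi$ has product property at $c$ is exactly the statement that $V$ does, so \cref{thm main compact c*} yields a bounded linear map $T\colon A\to B$ with $V(a,b)=T(ab)$ for all $a,b\in A$. Specializing to $ab=0$ gives $\varphi(a)\varphi(b)=T(0)=0$, so $\varphi$ preserves zero products.

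Now assume $B$ is a C$^*$-algebra and $\varphi$ is bijective. Fix a bounded approximate identity $\{e_\alpha\}$ of $A$ and set $h:=\varphi^{**}(1_{A^{**}})\in B^{**}$, the weak$^*$-limit of $\{\varphi(e_\alpha)\}$. Passing to the limit in $\varphi(e_\alpha)\varphi(b)=T(e_\alpha b)$ (weak$^*$ on the left, norm on the right), and analogously in $\varphi(a)\varphi(e_\alpha)=T(ae_\alpha)$, yields $h\varphi(b)=T(b)=\varphi(b)h$ for all $b\in A$. Hence $h$ commutes with $\varphi(A)=B$, and $hB,Bh\subseteq B$ because $h\varphi(b)=T(b)\in B$ and $\varphi$ is surjective; thus $h\in Z(M(B))$.

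The main obstacle is the invertibility of $h$ in $Z(M(B))$. Cohen factorization in $B$ combined with surjectivity of $\varphi$ shows that every element of $B$ has the form $\varphi(a)\varphi(b)=T(ab)$, so $T$ is surjective and therefore $hB=B$. The identity $(1-s_l(h))h=0$ implies $(1-s_l(h))T(A)=0$; combined with $T(A)=B$ and the triviality of the annihilator of $B$ inside $B^{**}$, this forces $s_l(h)=1$, so the left multiplication $L_h$ is injective on $B^{**}$. Thus $L_h\colon B\to B$ is a bounded bijection whose inverse is a right $B$-module map, realized via the approximate identity of $B$ as left multiplication by some $x\in B^{**}$ with $xB\subseteq B$ and $hx=1$ in $B^{**}$. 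Normality of the central element $h$ upgrades this one-sided inverse to a two-sided inverse in $B^{**}$, and centrality transfers to $x=h^{-1}$, placing it in $Z(M(B))$. Finally, setting $\rho(a):=h^{-1}\varphi(a)$ gives a bounded linear bijection $A\to B$, and the computation
\[
\rho(a)\rho(b)=h^{-2}\varphi(a)\varphi(b)=h^{-1}\varphi(ab)=\rho(ab)
\]
(using the relation $\varphi(a)\varphi(b)=h\varphi(ab)$) shows that $\rho$ is an algebra isomorphism satisfying $\varphi=h\rho$.
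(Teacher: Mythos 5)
Your proof is correct, and the first half (deducing zero-product preservation from \cref{thm main compact c*}) coincides with the paper's. The second half, however, takes a genuinely different and more self-contained route. The paper gets the weighted-isomorphism structure essentially for free from two external results: it first lifts the zero-product preserving property to $\varphi^{**}:A^{**}\to B^{**}$ via \cite[Proposition 2.3]{Gardella-Thiel_WeightedHom}, then invokes \cite[Proposition 4.11]{Chebotar-Ke-Lee-Wong03} to write $\varphi^{**}=h\widetilde{\rho}$ with $h\in Z(B^{**})$ invertible and $\widetilde{\rho}$ an isomorphism of the biduals, and only then descends to $A$ and $B$ using Cohen factorization to show $h\in M(B)$ and $\widetilde{\rho}(A)\subseteq B$. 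You instead build everything by hand: $h=\varphi^{**}(1)$ arises as the weak$^*$-limit of $\varphi(e_\alpha)$, the identity $h\varphi(b)=T(b)=\varphi(b)h$ follows from separate weak$^*$-continuity of the product in $B^{**}$, and the crucial invertibility of $h$ --- which the paper inherits from the cited structure theorem --- is proved directly: surjectivity of $L_h$ on $B$ via Cohen factorization plus surjectivity of $\varphi$, injectivity via the support projection argument and centrality of $h$ in $B^{**}$, and the inverse is located in $Z(M(B))$ through the bounded inverse $L_h^{-1}$ and a second approximate-identity limit. What your approach buys is independence from the two cited propositions (at the cost of length); what the paper's buys is brevity and the reuse of known machinery. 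One small presentational point: from $s_l(h)=1$ you directly get injectivity of \emph{right} multiplication by $h$; you do need the centrality/normality of $h$ in $B^{**}$ (which you establish anyway, since $h$ commutes with the weak$^*$-dense subalgebra $B$) before concluding that $L_h$ is injective, so it would be cleaner to record $h\in Z(B^{**})$ before the support argument rather than after.
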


\begin{proof}
 The bilinear map defined by $V(a,b)=\vf(a)\vf(b), a,b\in A$, is bounded and has product property at $c.$ By \cref{thm main compact c*} $\varphi$ preserves zero products. Moreover, if $\varphi$ is bijective and $B$ is a C$^*$-algebra, then $\varphi^{**}:A^{**}\to B^{**}$ is also bijective and by \cite[Proposition 2.3]{Gardella-Thiel_WeightedHom} $\varphi^{**}$ also preserves zero products. Now by \cite[Proposition 4.11]{Chebotar-Ke-Lee-Wong03} there exists $h\in Z(B^{**})$ and an isomorphism $\widetilde{\rho}:A^{**}\to B^{**}$ such that $\varphi^{**}(a)=h \widetilde{\rho}(a)$ for all $a\in A^{**}$. 
 
  For any $a,b\in A$ we have $h \varphi(ab)=h \varphi^{**}(ab)=h^2\widetilde{\rho}(a)\widetilde{\rho}(b)=\varphi^{**}(a)\varphi^{**}(b)=\varphi(a)\varphi(b)\in B$. By Cohen factorization property each $x\in A$ can be written as $x=ab$ for some $a,b\in A$. Hence, $h \varphi(x)\in B$ for all $x\in A.$ Since $\varphi$ is surjective and $h\in Z(B^{**})$, we have $hB,Bh\subseteq B$ and hence $h\in M(B).$ Therefore, $\widetilde{\rho}(A)=h^{-1}\varphi^{**}(A)=h^{-1} \varphi(A)=h^{-1}B\subseteq B$, so $\rho:=\widetilde{\rho}|_A$ is $B$-valued,  bijective and $\vf(a)=h \rho(a)$ for all $a\in A$. 
\end{proof}

Let $H$ be a complex Hilbert space. It is well known that $M(K(H))=B(H)$ and $Z(B(H))=\CC 1 $. If $A=K(H_1)$ and $B=K(H_2)$ in \cref{thm map-pp-at-c-compact case}, then by \cite[Corollary 3.2]{Chebotar-Ke-Lee-Wong03} injectivity is not needed in order to show that $\vf$ is a weighted homomorphism.

\begin{cor}
Let $H_i,i=1,2$, be complex Hilbert spaces and $c\in K(H_1)$ satisfying \cref{RH}. Let $\varphi: K(H_1)\to K(H_2)$ be a surjective bounded linear map that has product property at $c$. Then there exist $\lambda \in \CC$ and $S\in B(H_2,H_1)$ such that $\varphi(a)=\lambda S^{-1} a S$ for all $a\in K(H_1)$. 
\end{cor}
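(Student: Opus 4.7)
The plan is to reduce the statement to the known classification of surjective zero-product preservers between algebras of compact operators, which is exactly what the remark preceding the corollary points to.

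First I would invoke the first half of \cref{thm map-pp-at-c-compact case}. Since $K(H_1)$ is a compact C$^*$-algebra and $c\in K(H_1)$ satisfies \cref{RH}, applying that theorem to $\vf$ (viewed through the bounded bilinear map $V(a,b)=\vf(a)\vf(b)$ which inherits product property at $c$) yields that $\vf$ preserves zero products: $ab=0\impl \vf(a)\vf(b)=0$. Note that the bijectivity assumption in \cref{thm map-pp-at-c-compact case} is only used to deduce the weighted-homomorphism form via \cite[Proposition 4.11]{Chebotar-Ke-Lee-Wong03}; the zero-product preserving conclusion requires only boundedness and the rank-hypothesis on $c$.

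Second, $\vf\colon K(H_1)\to K(H_2)$ is now a bounded, \emph{surjective} linear map that preserves zero products. For such maps between compact operator algebras, the classification is available without any injectivity hypothesis: by \cite[Corollary 3.2]{Chebotar-Ke-Lee-Wong03}, there exist $\lambda\in\CC$ and an invertible $S\in B(H_2,H_1)$ such that $\vf(a)=\lambda\, S^{-1} a S$ for every $a\in K(H_1)$. This is exactly the asserted form.

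Since both ingredients (the first part of \cref{thm map-pp-at-c-compact case} and the classification in \cite{Chebotar-Ke-Lee-Wong03}) are already in place, there is no real technical obstacle. The only subtlety worth flagging explicitly is that we are applying \cref{thm map-pp-at-c-compact case} without bijectivity, but this is legitimate because only its first conclusion is used; removing injectivity in the second step is precisely the content of \cite[Corollary 3.2]{Chebotar-Ke-Lee-Wong03}, which relies on the specific structure of $K(H_1)$ and $K(H_2)$ rather than on general C$^*$-algebra arguments.
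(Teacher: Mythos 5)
Your proposal is correct and follows exactly the route the paper intends: the first conclusion of \cref{thm map-pp-at-c-compact case} (which indeed does not use bijectivity) gives that $\vf$ preserves zero products, and then \cite[Corollary 3.2]{Chebotar-Ke-Lee-Wong03} classifies surjective bounded zero-product preservers between $K(H_1)$ and $K(H_2)$ without any injectivity hypothesis, yielding the form $\vf(a)=\lambda S^{-1}aS$. This is precisely the argument sketched in the remark preceding the corollary in the paper.
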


Let $A$ be a C$^*$-algebra and $e\in A$ a partial isometry. Then $e$ induces the Peirce decomposition $A=A_2(e)\oplus A_1(e)\oplus A_0(e)$ where $A_2(e)=ee^*Ae^*e,$ $A_1(e)=(1-ee^*)Ae^*e\oplus ee^*A(1-e^*e)$ and $A_0(e)=(1-ee^*)A(1-e^*e)$. The Peirce-2 space $A_2(e)$ is a unital C$^*$-algebra with unit element $e$ when endowed with the product $a \cdot_e b=ae^*b$ and involution $a^{*_e}=ea^*e$.
 Now let $A$ be a von Neumann algebra realized as a JBW$^*$-triple. For $a\in A$ there exists a partial isometry $r(a)\in A$, called the \emph{range partial isometry} of $a$, such that $a \in A_2(r(a))_{+}$ and $\{a\}^{\perp}=A_0(r(a))$ (see \cite[Lemma 3.3]{EdRut_compact_trip}, \cite[Lemma 1]{GarPer_op1} and \cite[Theorem 5.3]{EdRut_Orth_faces}). Moreover, $r(a)$ coincides with the partial isometry in the polar decomposition of $a$ (see \cite[Corollary 2.12]{GarPer_proj_less}). Taking into account that $r(a)=s(a)$ whenever $a\in A_{sa}$, \cite[Theorem 5.3]{EdRut_Orth_faces} together with induction give the following:

\begin{lem}\label{lem support a perp b}
    Let $A$ be a von Neumann algebra and $a_1,\dots,a_n\in A_{sa}$ with $a_i\perp a_j$ for all $i\neq j$. Then $s(a_1+\dots+a_n)=s(a_1)+\dots +s(a_n)$ and $s(a_i)\perp s(a_j), \forall i\neq j$. 
\end{lem}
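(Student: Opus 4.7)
My plan is to prove both assertions simultaneously by induction on $n$, with the base case $n=2$ essentially being the content of \cite[Theorem 5.3]{EdRut_Orth_faces} together with a short verification, and the induction step being a reduction to the base case.

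For the base case $n=2$, suppose $a_1, a_2 \in A_{sa}$ with $a_1 \perp a_2$. Using the fact recalled just before the lemma that $r(a) = s(a)$ for self-adjoint $a$ and $\{a\}^\perp = A_0(r(a))$, the orthogonality $a_1 \perp a_2$ places $a_2 \in A_0(s(a_1))$, which translates into $s(a_1) a_2 = a_2 s(a_1) = 0$, and symmetrically $s(a_2) a_1 = a_1 s(a_2) = 0$. The cited Theorem~5.3 of \cite{EdRut_Orth_faces} then yields $s(a_1) \perp s(a_2)$, so $p := s(a_1) + s(a_2)$ is a projection. To identify $p$ with $s(a_1+a_2)$ I would verify the two inequalities: first,
\begin{align*}
p(a_1+a_2) = s(a_1)a_1 + s(a_2)a_2 + s(a_1)a_2 + s(a_2)a_1 = a_1 + a_2,
\end{align*}
so by the minimality characterization of the support projection, $s(a_1+a_2) \le p$. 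Conversely, multiplying $s(a_1+a_2)(a_1+a_2) = a_1+a_2$ on the right by $s(a_1)$ and using $a_2 s(a_1) = 0$ gives $s(a_1+a_2) a_1 = a_1$, so $s(a_1+a_2) \ge s(a_1)$; the symmetric argument gives $s(a_1+a_2) \ge s(a_2)$, and since these supports are orthogonal, $s(a_1+a_2) \ge p$.

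For the inductive step, assume the statement holds for $n$ and let $a_1,\dots,a_{n+1}$ be pairwise orthogonal self-adjoint elements. Set $b := a_1 + \dots + a_n$. The induction hypothesis gives $s(b) = s(a_1) + \dots + s(a_n)$ with mutually orthogonal summands. Since $a_i \perp a_{n+1}$ for $i \le n$, bilinearity of the triple product (or the elementary observation that $a_i a_{n+1} = a_{n+1} a_i = 0$ for self-adjoint orthogonal elements) yields $b \perp a_{n+1}$. Applying the base case to the pair $(b, a_{n+1})$ delivers $s(b) \perp s(a_{n+1})$ and $s(b + a_{n+1}) = s(b) + s(a_{n+1}) = s(a_1) + \dots + s(a_{n+1})$. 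The mutual orthogonality $s(a_i) \perp s(a_j)$ for $i \ne j$ with $i, j \le n+1$ follows by combining the induction hypothesis (for pairs inside $\{1,\dots,n\}$) with the base case applied to each pair $(a_i, a_{n+1})$.

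The only real point of care is the base case, where one has to argue $s(a_1+a_2) = s(a_1)+s(a_2)$ and not merely the orthogonality of supports. I expect no serious obstacle: both inequalities reduce to the defining minimality property of the support projection together with the relations $s(a_1)a_2 = a_2 s(a_1) = 0$ already extracted from $\{a_1\}^\perp = A_0(s(a_1))$. The induction itself is purely formal.
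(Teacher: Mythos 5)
Your proof is correct and follows exactly the route the paper itself indicates: the paper gives no written-out argument, only the remark that the lemma follows from \cite[Theorem 5.3]{EdRut_Orth_faces} (via $\{a\}^{\perp}=A_0(s(a))$ for $a\in A_{sa}$) together with induction, and your base case plus induction step is precisely that argument with the details of the two inequalities $s(a_1+a_2)\le s(a_1)+s(a_2)$ and $s(a_1+a_2)\ge s(a_1)+s(a_2)$ supplied.
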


Let $A$ be a C$^*$-algebra and $p,q\in P(A).$ We say that $q$ is less than or equal to $p$, denoted $q\leq p$, if $pq=q.$ It is easy to see that if $q\leq p$ then $A_2(q)\subseteq A_2(p)$.

\begin{lem}\label{lem tech disj pres von neumann algebras}
    Let $\vf:A\to B$ be a bounded linear map between two von Neumann algebras. If $\vf$ is symmetric and preserves zero products, then there exists a $^*$-homomorphism $\rho:A\to B$ such that
    $\vf=\vf(1)\rho(-)=\rho(-)\vf(1)$.
\end{lem}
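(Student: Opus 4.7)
The plan is to exploit that every C$^*$-algebra is zpd as a Banach algebra, so that \cref{vf(1)psi(ab)=vf(a)psi(b)=vf(ab)psi(1)} applied to the pair $(\vf,\vf)$ yields the identity
\begin{equation*}
  \vf(1)\vf(ab)=\vf(a)\vf(b)=\vf(ab)\vf(1),\quad a,b\in A.
\end{equation*}
Setting $h:=\vf(1)$, which is self-adjoint by symmetry, and specialising $b=1$ in the previous identity, I see that $h$ commutes with every $\vf(a)$, hence so does every element of the abelian von Neumann subalgebra of $B$ generated by $h$. In particular, each $\vf(a)$ will commute with the support projection $p:=s(h)\in B$ and with the Borel-functional-calculus element $k\in B$ obtained from $f(t)=1/t$ on $\sigma(h)\sm\{0\}$, $f(0)=0$, which satisfies $hk=kh=p$ and $k=k^*$.

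The key step will be to prove $\vf(A)\sst pBp$, i.e.\ $\vf(b)(1-p)=0$ for every $b\in A$. To this end I will multiply the displayed identity on the right by $1-p$ and use $h(1-p)=0$ together with the commutation $\vf(ab)(1-p)=(1-p)\vf(ab)$ to obtain $\vf(a)\vf(b)(1-p)=0$ for all $a,b\in A$. Specialising $a=b^*$ and invoking symmetry will yield $\vf(b)^*\vf(b)(1-p)=0$, and the C$^*$-identity will then force
\begin{equation*}
  \|\vf(b)(1-p)\|^2=\|(1-p)\vf(b)^*\vf(b)(1-p)\|\le\|\vf(b)^*\vf(b)(1-p)\|=0.
\end{equation*}

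With $\vf(A)\sst pBp$ available, I will define $\rho(a):=k\vf(a)$. Linearity is immediate, and
\begin{align*}
  \rho(a)\rho(b)&=k\vf(a)k\vf(b)=k^2\vf(a)\vf(b)\\
  &=k^2 h\vf(ab)=kp\vf(ab)=k\vf(ab)=\rho(ab),
\end{align*}
while $\rho(a^*)=k\vf(a)^*=(\vf(a)k)^*=\rho(a)^*$, thanks to self-adjointness of $k$ and its commutation with $\vf(a)$. Hence $\rho:A\to B$ is a $^*$-homomorphism, and $\vf(1)\rho(a)=hk\vf(a)=p\vf(a)=\vf(a)$, with the symmetric identity $\rho(a)\vf(1)=\vf(a)$, which is the required factorisation.

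The delicate part of the argument is the middle paragraph: it is precisely the combination of symmetry of $\vf$ with the C$^*$-identity that allows one to upgrade the two-sided equality $\vf(a)\vf(b)(1-p)=0$ into the one-sided statement $\vf(b)(1-p)=0$. Without this reduction, the partial inverse $k$ cannot invert $h$ on the image of $\vf$ and the factorisation $\vf=\vf(1)\rho(-)=\rho(-)\vf(1)$ would not follow; everything afterwards is formal manipulation inside the abelian von Neumann subalgebra generated by $h$.
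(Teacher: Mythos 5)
Your argument is sound up to and including the step $\vf(A)\sst pBp$: the identity $\vf(1)\vf(ab)=\vf(a)\vf(b)=\vf(ab)\vf(1)$ obtained from \cref{vf(1)psi(ab)=vf(a)psi(b)=vf(ab)psi(1)}, the commutation of $h=\vf(1)$ with the range of $\vf$, and the C$^*$-identity argument giving $\vf(b)(1-p)=0$ are all correct. The gap is the assertion that the Borel function $f(t)=1/t$ on $\sigma(h)\sm\{0\}$, $f(0)=0$, produces an element $k\in B$. Borel functional calculus lands in the von Neumann algebra only for \emph{bounded} Borel functions, and $f$ is bounded on $\sigma(h)$ only when $0$ is an isolated point of $\sigma(h)$ or absent from it, i.e.\ when $h$ has closed range. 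This fails in examples satisfying all your hypotheses: take $A=B=L^\infty[0,1]$ and $\vf(g)=tg$; then $\vf$ is bounded, symmetric and preserves zero products, $h$ is the identity function $t$ with $\sigma(h)=[0,1]$, and your $k$ would be the unbounded function $1/t$, which is not in $B$. Since every subsequent step --- the definition $\rho(a):=k\vf(a)$, the computation $\rho(a)\rho(b)=k^2h\vf(ab)=k\vf(ab)$, and the relation $hk=p$ --- treats $k$ as an element of $B$, the construction of $\rho$ does not go through as written. (In this example the lemma itself is of course true, with $\rho=\id$; it is only your construction that breaks.)

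The gap is not merely cosmetic. To make $\rho(a)$ a bounded operator along your route one needs an a priori estimate such as $\vf(a)\vf(a)^*\le\|a\|^2h^2$, which does not follow from the displayed identity without additional work; it can be derived by iterating $\vf(x)^{2^n}=h^{2^n-1}\vf(x^{2^n})$ for $x=aa^*\ge 0$ inside the abelian von Neumann algebra generated by $h$ and the elements $\vf(x^{2^n})$, and one must then replace $k$ by the truncated inverses $f_n(h)$, where $f_n(t)=1/t$ for $|t|\ge 1/n$ and $f_n(t)=0$ otherwise, and pass to a limit of $f_n(h)\vf(a)$. The paper's proof avoids the invertibility problem altogether: it notes that a symmetric zero-product preserver preserves orthogonality and invokes the structure theorem for orthogonality preservers to obtain a Jordan $^*$-homomorphism $\rho$ into the Peirce-2 space $B_2^{**}(s)$ of the support projection of $h$ in the bidual, where left and right multiplication by $h$ are injective (no inverse is needed); it then upgrades $\rho$ to a homomorphism via zero products and shows it is $B$-valued by evaluating on projections. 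If you wish to keep your more elementary route, you must supply the missing boundedness estimate for $k\vf(a)$.
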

\begin{proof}

Set $h=\vf(1)\in B_{sa}$. Set $s=s_{B^{**}}(h),$ where $ s_{B^{**}}(h)$ stands for the support projection of $h$ in $B^{**}$. Observe that $L_h,R_h:B_2^{**}(s)\to B_2^{**}(s), L_h(x)=hx,R_h(x)=xh$ are injective. Indeed, take $x\in B_2^{**}(s)$. Then $hx=hsx=h\cdot_{s} x=0$. Since $h\in B_2(s)_{sa}$ this implies that $x\perp s$. Hence $x=0$. The proof for $R_h$ is analogous.

Since $\vf$ is symmetric and preserves zero-products then $\vf$ preserves orthogonality, that is, $a\perp b$ implies $\vf(a)\perp \vf(b)$. Thus by \cite[Theorem 4.1]{OP_REV} there exists a Jordan $^*$-homomorphism $\rho:A\to B_2^{**}(s)$ such that $\vf(a)=\frac{1}{2}(h s\rho (a)+hs\rho(a))$ for all $a\in A.$ Moreover, $h\rho(a)=\rho(a)h^*=\rho(a)h$ holds for every $a\in A_{sa}$ (see the proof of \cite[Theorem 4.1]{OP_REV}). Thus $h \rho(-)=\rho(-)h$ since $A=A_{sa}+i A_{sa}$. Taking into account that $hs=sh=h$ we have $\vf(-)=h\rho(-)=\rho(-)h.$

 Observe that, since $s$ is a projection, then for $a,b\in B_2^{**}(s)$ we have $ab=a(s +(1-s))b=asb=a\cdot_{s} b$ and $a^*=(sas)^*=sa^*s=a^{*_{s}}.$ Thus  $\rho:A\to B_2^{**}(s)$ being  a Jordan $^*$-homomorphism implies that $\rho:A\to B^{**}$ also is a Jordan $^*$-homomorphism. Let us show that $\rho$ is a homomorphism.  Take $a,b\in A$ such that $ab=0$. We have $h^2 \rho(a)\rho(b)=\vf(a)\vf(b)=0$. Since $\rho(a)\rho(b)\in B_2^{**}(s)$, we have
   \begin{align*}
    0=h^2 \rho(a)\rho(b)=h^2 s\rho(a)\rho(b)=h^2\cdot_{s} \rho(a)\rho(b)=h\cdot_{s}(h\cdot_{s} \rho(a)\rho(b)),   
   \end{align*}
    whence $h\cdot_{s}\rho(a)\rho(b)=0$ which gives $\rho(a)\rho(b)=0$. Finally, $\rho$ is a homomorphism by \cite[Theorem 3.8]{Peralta2013ANO}.

To finish the proof, we shall show that $\rho$ is $B$-valued. Since $\Span(P(A))$ is norm dense in $A,$ it is enough to show that $\rho( P(A))\subseteq B.$ Let us fix $p\in P(A)$. Then  $\vf(p),\vf(1-p)\in B_{sa}$ and $p\perp (1-p)$. It follows that $\vf(p) \perp \vf(1-p)$. We define $\widetilde{\rho}(p):= s(\vf(p))$ (where the support projections are now taken inside $B$). Then
   \begin{align}
      \label{eq 0 lem tech disj pres von neumann algebras} \vf(p)=s(\vf(p)) \vf(p) s(\vf(p))=s(h) \vf(p) s(h) & \mbox{ and }\\
        \vf(p)=h \widetilde{\rho}(p)=\widetilde{\rho}(p)h, &\label{eq 1 lem tech disj pres von neumann algebras} 
   \end{align}
where in the above equalities we used the fact that $h=\vf(p)+\vf(1-p)$ and $s(\vf(p))\perp s(\vf(1-p))$ thanks to \cref{lem support a perp b}. 

   If follows from \cref{eq 0 lem tech disj pres von neumann algebras} that $\vf(P(A))\subseteq B_2(s(h))$. By orthogonality 
   \begin{align*}
     s(\vf(p))=s(\vf(p)) s(\vf(p))s(\vf(p))=s(h)s(\vf(p))s(h),\ \forall p\in P(A).
   \end{align*}
   Thus $\widetilde{\rho}(P(A))\subseteq B_2(s(h))$. The mapping $\widetilde{\rho}: P(A)\to B_2(s(h)), p\mapsto \widetilde{\rho}(p)$, is well defined by \cref{eq 1 lem tech disj pres von neumann algebras} and the fact that left and right multiplications by $h$ restricted to $B_2(s(h))$ are injective.
   
Since $h$ is positive in $B_2(s(h))$, then it is also positive in $B_2^{**}(s(h))$. Now applying \cite[Lemma 4.3]{GarPer_proj_less} we see that $s=s_{B^{**}}(h)\leq s(h)$ (as projections in $B^{**}$) which implies that $B_2^{**}(s)\subseteq B_2^{**}(s(h)).$ As a consequence $\rho(A)\subseteq B_2^{**}(s(h)).$  Now if $p\in P(A)$ then $h\widetilde{\rho}(p)=\vf(a)=h \rho(p) $ whence  $\widetilde{\rho}(p)=\rho(p)$ since $\widetilde{\rho}(p),\rho(p)\in B_2^{**}(s(h)).$ Thus $\rho(P(A))\subseteq B_2(s(h))\subseteq B$ which finishes the proof.
\end{proof}


\begin{thrm}\label{thm map with pp at finite rank element}
Let $A,B$ be von Neumann algebras such that $A_{at}\ne\{0\}$.  Let $c\in soc(A)$ satisfying \cref{VRH} and $\varphi :A\to B$ be a linear map that has product property at $c.$ Then $\varphi$ preserves zero products. Moreover, if $\varphi$ is bounded and symmetric (resp. bounded and surjective), then there exist $h\in \vf(A)'$ and a $^*$-homomorphism (resp. surjective homomorphism) $\rho:A\to B$ such that $\varphi=h \rho(-).$ 

\end{thrm}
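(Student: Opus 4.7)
The proof divides into three parts. \emph{(1) Zero-product preservation.} Consider the bilinear map $V:A\times A\to B$ given by $V(a,b):=\vf(a)\vf(b)$; this is bilinear without any continuity requirement. Since $(\vf,\vf)$ has product property at $c$ by hypothesis, $V$ has product property at $c$ in the bilinear sense, and since $c\in soc(A)$ is a finite-rank element of $A_{at}$ satisfying \cref{VRH}, the algebraic statement of \cref{thm von neumann finite rank} applies to $V$ and yields product property at zero. This is exactly the statement that $\vf$ preserves zero products.

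\emph{(2) Symmetric case.} Now $\vf$ is bounded, symmetric, and (by the previous step) preserves zero products, matching precisely the hypotheses of \cref{lem tech disj pres von neumann algebras}. That lemma supplies a $^*$-homomorphism $\rho:A\to B$ with $\vf(-) = \vf(1)\rho(-) = \rho(-)\vf(1)$. Setting $h:=\vf(1)$, the commutation $h\rho(a)=\rho(a)h$ propagates to $h\vf(a)=h^2\rho(a)=\rho(a)h^2=\vf(a)h$ for every $a\in A$, so $h\in\vf(A)'$.

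\emph{(3) Surjective case.} Since $A$ is a unital zpd Banach algebra and $\vf$ is bounded and preserves zero products, \cref{vf(1)psi(ab)=vf(a)psi(b)=vf(ab)psi(1)} with $\psi=\vf$ gives $\vf(1)\vf(ab)=\vf(a)\vf(b)=\vf(ab)\vf(1)$ for all $a,b\in A$; by Cohen factorization this identity ranges over all of $A$, so $h:=\vf(1)$ commutes with $\vf(A)=B$, whence $h\in Z(B)\subseteq\vf(A)'$. To produce the surjective homomorphism $\rho$, I would follow the strategy of \cref{thm map-pp-at-c-compact case}: pass to the biduals $\vf^{**}:A^{**}\to B^{**}$, use \cite[Proposition~2.3]{Gardella-Thiel_WeightedHom} to lift zero-product preservation to $\vf^{**}$, and apply a surjective analogue of \cite[Proposition~4.11]{Chebotar-Ke-Lee-Wong03} to obtain a decomposition $\vf^{**}=h\widetilde{\rho}$ with $\widetilde{\rho}:A^{**}\to B^{**}$ a surjective homomorphism; a Cohen-factorization argument as in \cref{thm map-pp-at-c-compact case} then restricts $\widetilde{\rho}$ to a $B$-valued surjective homomorphism $\rho:=\widetilde{\rho}|_A$.

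The main obstacle is precisely this last step, namely producing $\rho$ when $h$ need not be invertible. The argument of \cref{thm map-pp-at-c-compact case} leans on formally setting $\widetilde{\rho} = h^{-1}\vf^{**}$, which is unavailable without injectivity. A workable alternative is to factor through the quotient $A/\ker\vf$ --- a two-sided ideal of $A$ by the single-map version of \cref{lem zp pair are inverti preserving}\cref{vf-and-psi-same-ker} --- so that the induced injective surjection $A/\ker\vf\to B$ becomes amenable to the bijective machinery used in \cref{thm map-pp-at-c-compact case}, yielding the desired central weight $h$ and homomorphism $\rho$.
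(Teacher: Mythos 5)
Your first two parts are correct and coincide with the paper's own argument: zero-product preservation is exactly \cref{thm von neumann finite rank} applied to $V(a,b)=\vf(a)\vf(b)$ (and indeed $soc(A)\subseteq A_{at}$ consists of finite-rank elements, so the hypotheses match), and the symmetric case is exactly \cref{lem tech disj pres von neumann algebras}; your verification that $h=\vf(1)\in\vf(A)'$ via $h\vf(a)=h^{2}\rho(a)=\rho(a)h^{2}=\vf(a)h$ is a correct elaboration of what the paper leaves implicit.

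The surjective case is where you have a genuine gap, and you flag it yourself. The paper disposes of this case by citing \cite[Theorem 4.1 (vi)]{Chebotar-Ke-Lee-Wong03} outright, so no construction is attempted there. Your proposed workaround does not close the gap: the ``single-map version'' of \cref{lem zp pair are inverti preserving}\cref{vf-and-psi-same-ker} is proved under the hypothesis that $B$ is Dedekind-finite --- its proof really needs $\vf(1)$ to be invertible --- and neither is assumed here, so you cannot invoke it to conclude that $\ker\vf$ is a two-sided ideal; and even granting that, you would still need $A/\ker\vf$ to be a C$^*$-algebra on which the bijective machinery of \cref{thm map-pp-at-c-compact case} runs. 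The missing ingredient is in fact already within reach of what you proved. From \cref{eq phi psi zp identity} with $\psi=\vf$ and Cohen factorization, $h=\vf(1)$ commutes with $\vf(A)=B$; choosing $a_0$ with $\vf(a_0)=1$ gives $\vf(b)=\vf(a_0)\vf(b)=h\,\vf(a_0b)$ for every $b$, so $B=hB$ and the central element $h$ is right invertible, hence invertible. Then $\rho:=h^{-1}\vf$ satisfies $\rho(a)\rho(b)=h^{-2}\vf(a)\vf(b)=h^{-2}\vf(1)\vf(ab)=h^{-1}\vf(ab)=\rho(ab)$, so $\rho$ is a surjective homomorphism with $\vf=h\rho$ and $h\in Z(B)\subseteq\vf(A)'$; no passage to biduals or quotients is required.
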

\begin{proof}
 By \cref{thm von neumann finite rank} applied to the bilinear map $V(a,b)=\vf(a) \vf(b)$ we show that $\varphi$ preserves zero products.
 
 If $\varphi$ is bounded and symmetric, then apply \cref{lem tech disj pres von neumann algebras}. If $\varphi$ is bounded and surjective, then the result is a direct consequence of \cite[Theorem 4.1 (vi)]{Chebotar-Ke-Lee-Wong03}.
\end{proof}

\begin{rem}
  \cref{thm map with pp at finite rank element} also holds if we assume that $\vf$ is weak$^*$-continuous and $c\in K(A)$. The proof is the same as that of \cref{thm map with pp at finite rank element} but in the first step we apply  \cref{thhm von Neumann algebras} instead of \cref{thm von neumann finite rank}.
\end{rem}

\subsection{Derivations at a fixed point}

Let $A$ be algebra and $X$ an $A$-bimodule. We say that a linear map $\delta:A\to X$ is a \textit{derivation} if  for all $a,b\in A$:
 \begin{align*}
     \delta(ab)=\delta(a)b+a\delta(b).
 \end{align*}
Given a fixed $c\in A,$ a linear map $\delta:A\to X$ is said to be a \textit{derivation at $c$} (or \textit{derivable at} $c$) if for all $a,b\in A$:
\begin{align*}
    ab=c \impl \delta(ab)=\delta(a)b+a\delta(b).
\end{align*}

Let $A$ be a C$^*$-algebra and $X$ a Banach A-bimodule. We say that $X$ is \textit{essential} if $\overline{\Span(\{axb: a,b\in A,x\in X\})}^{\|.\|}=X.$ Recall that $X^{**}$ is a Banach $A^{**}$-bimodule and also a Banach $A$-bimodule (see \cite[Theorem 2.6.15 (iii)]{HG_Dales_Book} for more details).

The proof of the following result is essentially contained in the proof of \cite[Theorem 6]{AbAdJamPeralta-AntiDer-2020}.

\begin{prop}\label{prop derivable at zero}
     Let $\delta:A\to X$ be a bounded linear map, where $A$ is a C$^*$-algebra and $X$ is an essential Banach A-bimodule. The following are equivalent:
     \begin{enumerate}
         \item $\delta$ is a derivation at zero.
         \item There exists a derivation $d:A\to X^{**}$ and an element $\xi \in X^{**}$ such that $a \xi=\xi a$ and $\delta(a)=d(a)+\xi a$ for all $a\in A$.
     \end{enumerate}
     Moreover, if $A$ is unital or $X$ is a dual Banach space then $\xi \in X.$
\end{prop}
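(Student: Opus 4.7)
The plan is to verify (ii) $\impl$ (i) by direct substitution and to establish (i) $\impl$ (ii) via the zpd Banach algebra property of $A$ followed by an Arens-product argument on the biduals. For the easy direction, assume $\delta(a)=d(a)+\xi a$ with $d$ a derivation and $a\xi=\xi a$ for every $a\in A$. Then for $a,b\in A$ with $ab=0$ one has
\[
\delta(a)b+a\delta(b)=d(a)b+ad(b)+\xi ab+a\xi b=0,
\]
since $d(ab)=d(0)=0$, $\xi ab=\xi(ab)=0$ and $a\xi b=(\xi a)b=\xi(ab)=0$; combined with $\delta(0)=0$ this confirms (i).

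For the main implication (i) $\impl$ (ii), I first observe that the bounded bilinear map $V\colon A\times A\to X$ defined by $V(a,b):=\delta(a)b+a\delta(b)$ vanishes whenever $ab=0$, so it has product property at zero. Since every C$^*$-algebra is a zpd Banach algebra, there exists a bounded linear $T\colon A\to X$ with
\begin{align}\label{eq:zpd-T-plan}
    \delta(a)b+a\delta(b)=T(ab),\quad a,b\in A.
\end{align}
I then pass to $A^{**}$ equipped with its first Arens product, which is a unital Banach algebra with identity $e$ given by the weak$^*$-limit of any bounded approximate identity of $A$, and endow $X^{**}$ with the Arens-extended $A^{**}$-bimodule structure. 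Setting $\xi:=\delta^{**}(e)\in X^{**}$, I fix $a\in A$ and take the weak$^*$-continuous extension of \cref{eq:zpd-T-plan} in the variable $b$; evaluating at $b=e$ and using that $x\cdot e=x$ for every $x\in X$ (which follows from the BAI description of $e$) yields $\delta(a)+a\xi=T(a)$. The symmetric argument in the first variable gives $\xi b+\delta(b)=T(b)$, whence $a\xi=\xi a$ for every $a\in A$.

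Defining $d(a):=\delta(a)-\xi a\in X^{**}$, the decomposition $\delta(a)=d(a)+\xi a$ holds by construction. The commutation implies $(ab)\xi=a(\xi b)=(a\xi)b=\xi(ab)$, and substituting $ab$ for $a$ in $\delta(a)+a\xi=T(a)$ gives $\delta(ab)=T(ab)-\xi(ab)$, so $d(ab)=T(ab)-2\xi(ab)$. A direct expansion using \cref{eq:zpd-T-plan} shows
\[
d(a)b+ad(b)=\delta(a)b+a\delta(b)-\xi ab-a\xi b=T(ab)-2\xi(ab)
\]
as well, so $d$ is a derivation. For the final clause: if $A$ is unital, then $e=1_A\in A$ and $\xi=\delta(1_A)\in X$; if $X=Y^*$, let $P\colon X^{**}\to X$ be the canonical contractive $A$-bimodule projection, replace $\xi$ by $\xi':=P(\xi)\in X$, and note that $(ab)\xi=T(ab)-\delta(ab)\in X$ already, so $P$ does not alter the products $(ab)\xi$, and the decomposition persists with $d'(a):=\delta(a)-\xi' a\in X$. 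The principal technical obstacle is the Arens-product bookkeeping needed to extend \cref{eq:zpd-T-plan} to the bidual and to justify the identities $\delta(a)+a\xi=T(a)$ and $\xi a+\delta(a)=T(a)$; once these are in place, the remaining manipulations are purely algebraic.
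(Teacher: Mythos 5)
Your proof is correct and follows essentially the same route that the paper delegates to the cited reference (Theorem 6 of the anti-derivations paper): the zpd Banach-algebra property of $A$ yields $T$ with $\delta(a)b+a\delta(b)=T(ab)$, and evaluating the Arens-extended identity at the weak$^*$-limit $e$ of a bounded approximate identity produces $\xi=\delta^{**}(e)$, the commutation relation, and the decomposition $\delta=d+\xi(-)$. The only detail worth making explicit is that in the dual-space case the canonical projection $X^{**}\to X$ is an $A$-bimodule map only when the module actions on $X$ are separately weak$^*$-continuous, a hypothesis implicit in the statement and satisfied in all of the paper's applications.
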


\begin{cor}\label{cor derivation at c}
Let $A$ be C$^*$-algebra with $soc(A)\neq \{0 \}$ and $X$ a Banach $A$-bimodule. Fix $c\in A$ and let $\delta:A\to X$ be a derivation at $c$. Assume that one of the following holds: 
\begin{enumerate}
    \item \label{cor derivation at c 1} $A$ and $X$ are dual Banach spaces, $c\in soc(A)$ and $\delta$ is bounded,
    \item \label{cor derivation at c 2}  $A$ and $X$ are dual Banach spaces, $c\in K(A)$ and $\delta$ is  weak$^*$-continuous,
    \item \label{cor derivation at c 3}  $A$ is a compact C$^*$-algebra and $\delta$ is bounded.
\end{enumerate}
If $c$ satisfies \cref{VRH} (resp., \cref{RH} if \cref{cor derivation at c 3} holds) then $\delta$ is a derivation at zero. Moreover, there exist a derivation $d:A\to X$ and $\xi \in X$ (resp. $d:A\to X^{**}$ and $\xi \in X^{**}$ if \cref{cor derivation at c 3} holds) such that $\xi c=0$,  $\delta(a)=d(a)+\xi a$ and $\xi a= a\xi$ for all $a\in A$.
\end{cor}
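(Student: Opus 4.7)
The plan is to reduce everything to the derivation-at-zero case via the bilinear map canonically attached to $\delta$, and then invoke \cref{prop derivable at zero}. Define
\[
V:A\times A\to X,\qquad V(a,b):=\delta(a)b+a\delta(b)
\]
(valued in $X^{**}$ in case \cref{cor derivation at c 3}). Since $\delta$ is derivable at $c$, we have $V(a,b)=\delta(c)$ whenever $ab=c$. If $c=0$ we are already done (take $d=\delta$, $\xi=0$), so assume $c\ne 0$. Pick a continuous linear functional $f$ on $A$ with $f(c)=1$ (weak$^*$-continuous in case \cref{cor derivation at c 2}) and set $\varphi(a):=f(a)\delta(c)$; then $V(a,b)=\varphi(ab)$ for every pair $(a,b)$ with $ab=c$, so $V$ has product property at $c$ in the sense of the paper. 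Moreover, $V$ is bounded in cases \cref{cor derivation at c 1} and \cref{cor derivation at c 3}, and separately weak$^*$-to-weak$^*$ continuous in case \cref{cor derivation at c 2} (by the weak$^*$-continuity of $\delta$ together with the standard separate weak$^*$-continuity of the dual Banach $A$-bimodule actions on $X$).

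I would then apply the appropriate earlier result to conclude that $V$ has product property at zero: \cref{cor von neumann finit bounded} in case \cref{cor derivation at c 1}, \cref{thhm von Neumann algebras} in case \cref{cor derivation at c 2}, and \cref{thm main compact c*} in case \cref{cor derivation at c 3}. Translating back, this says $\delta(a)b+a\delta(b)=0$ whenever $ab=0$, i.e., $\delta$ is a derivation at zero. \cref{prop derivable at zero} now provides a derivation $d:A\to X^{**}$ and an element $\xi\in X^{**}$ with $a\xi=\xi a$ for all $a\in A$ and $\delta(a)=d(a)+\xi a$. Under the assumptions of \cref{cor derivation at c 1} or \cref{cor derivation at c 2}, $X$ is a dual Banach space, so the ``moreover'' clause of \cref{prop derivable at zero} places $\xi$ in $X$; consequently $d(a)=\delta(a)-\xi a\in X$, and $d:A\to X$ is a derivation.

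Finally, to see that $\xi c=0$, fix any factorization $ab=c$ and substitute $\delta=d+\xi(\cdot)$ in the derivation-at-$c$ identity $\delta(c)=\delta(a)b+a\delta(b)$. Using that $d$ is a derivation we get $d(a)b+ad(b)=d(c)$, and using $a\xi=\xi a$ we have $(\xi a)b=\xi(ab)=\xi c$ and $a(\xi b)=(a\xi)b=(\xi a)b=\xi c$. The identity therefore collapses to $d(c)+\xi c=d(c)+2\xi c$, whence $\xi c=0$.

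The main obstacle is essentially bookkeeping: matching each of the three cases with the corresponding theorem from \cref{sec-compact,sec-von-Neumann} and verifying the required regularity of $V$. The only mildly delicate point is the separate weak$^*$-continuity of $V$ in case \cref{cor derivation at c 2}, which relies on the (standard) fact that the module actions on a dual Banach $A$-bimodule are separately weak$^*$-continuous. Everything else is a direct combination of the product-property theorems with \cref{prop derivable at zero} and a short computation.
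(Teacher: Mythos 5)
Your proposal is correct and follows essentially the same route as the paper: the paper simply takes $V(a,b)=\delta(ab)-\delta(a)b-a\delta(b)$, which vanishes whenever $ab=c$ and so has product property at $c$ with $\varphi=0$, avoiding your (also valid) construction of $\varphi$ via a functional $f$ with $f(c)=1$. The remaining steps — invoking \cref{thm von neumann finite rank}/\cref{cor von neumann finit bounded}, \cref{thhm von Neumann algebras}, or \cref{thm main compact c*} according to the case, then \cref{prop derivable at zero}, and the short computation giving $\xi c=0$ — match the paper's argument.
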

\begin{proof}
 Define $V:A\times A\to X$ by $V(a,b)=\delta(ab)-\delta(a)b-a\delta(b).$
 Since $\delta$ is a derivation at $c$ it follows that $V$ has product property at $c$.
 
 \cref{cor derivation at c 1} In this case $V$ is bounded and has product property at zero by \cref{thm von neumann finite rank}, hence $\delta$ is a (bounded) derivation at zero.

 \cref{cor derivation at c 2} In this case $V$ is separately weak$^*$-continuous and has product property at zero by \cref{thhm von Neumann algebras}, hence $\delta$ is a (bounded) derivation at zero.

 \cref{cor derivation at c 3} In this case  $\delta$ is a (bounded) derivation at zero by \cref{thm main compact c*}.

 By \cref{prop derivable at zero} there exists a derivation $d:A\to X^{**}$ and an element $\xi \in X^{**}$ such that $a \xi=\xi a$ and $\delta(a)=d(a)+\xi a$ for all $a\in A$. Moreover, take $a,b\in A$ such that $ab=c$. We have
 \begin{align*}
    a(d(b)+\xi b)+(d(a)+\xi a)b&=a\delta(b)+\delta(a)b=\delta(ab)\\
    &=d(ab)+\xi ab=d(a)b+ad(b)+\xi ab,
 \end{align*}
 whence $\xi c=\xi ab=0$.

 In cases \cref{cor derivation at c 1,cor derivation at c 2} we have $d:A\to X$ and $\xi \in X$ since $A$ is unital.
\end{proof}
	\section*{Acknowledgements}
	Jorge J. Garcés was partially supported by grant PID2021-122126NB-C31 funded by MCIN/AEI/10.13039/501100011033 and Junta de Andalucía grant FQM375. Mykola Khrypchenko was partially supported by CMUP, member of LASI, which is financed by national funds through FCT --- Fundação para a Ciência e a Tecnologia, I.P., under the project with reference UIDB/00144/2020. The authors are grateful to the referee for a careful reading of the paper and the suggested useful improvements.
\bibliography{bibl}{}
	\bibliographystyle{acm}

\end{document}